\documentclass{amsart}

\usepackage{amsmath,amsthm,amscd,amsfonts,wasysym,amssymb,epic,eepic,bbm,tikz-cd,mathtools,multicol,enumitem,mathrsfs,comment}
\usepackage[pagebackref,colorlinks=true,linkcolor=blue,citecolor=blue]{hyperref}
\usepackage[noabbrev]{cleveref}
\usepackage{MnSymbol}
\makeatletter
\newcommand\myitem[1][]{\item[#1]\refstepcounter{dummy}\def\@currentlabel{#1}}
\makeatother

\allowdisplaybreaks
\newtheorem{thm}{Theorem}[section]
\newtheorem*{thm*}{Theorem}

\newtheorem{lem}[thm]{Lemma}

\newtheorem{prop}[thm]{Proposition}

\newcounter{remarkscounter}

\numberwithin{equation}{section}

\newcommand{\SL}{\mathrm{SL}}

\newcommand{\Spec}{\mathrm{Spec}}

\newcommand{\Lc}{\mathcal{L}}

\newcommand{\C}{\mathbf{C}}

\newcommand{\Hom}{\mathrm{Hom}}

\makeatletter
\newcommand{\rprod}{%
  \DOTSB             
  \mathop{\mathpalette\rprod@\relax}\slimits@}
\newcommand{\rprod@}[2]{%
  \ooalign{$\m@th#1\prod$\cr$\m@th#1\coprod$\cr}}
\makeatother

\newcommand{\quash}[1]{}

\theoremstyle{definition}
\newtheorem{defn}[thm]{Definition}
\newtheorem{lemma}[thm]{Lemma}

\renewcommand{\bar}{\overline}
\numberwithin{equation}{subsection}

\renewcommand{\hat}{\widehat}

\def\C{{\mathbf{C}}}

\def\Hom{{\mathrm {Hom}}}

\def\End{{\mathrm {End}}}

\def\ker{{\mathrm {ker}\,}}
\def\Ker{{\mathrm {Ker}\,}}

\def\Spec{{\mathrm {Spec}}}

\def\dim{{\mathrm{dim}}}

\def\SL{{\mathrm{SL}}}

\def\C{{\mathbf{C}}}

\allowdisplaybreaks

\begin{document}
\linespread{1.2}

\title{Cohomology on Cotangent Bundles of Partial Flag Varieties in Type A}

\author{Nikolay Grantcharov}
\address{Department of Mathematics\\
University of Georgia\\
Athens, GA 30602}
\email{nikolayg@uga.edu}
\subjclass[2020]{Primary 14M15; Secondary 14M17, 14F17}
\begin{abstract}
Let $G=\mathrm{SL}_n(\mathbf C)$ and let $P\subset G$ be a standard parabolic subgroup with Levi factor $L$. For a $G$-dominant weight $\lambda$, consider the vector bundle on $T^*(G/P)$ obtained by pulling back the vector bundle on $G/P$ associated to the irreducible $L$-module $V_L(\lambda)^*$. We express its cohomology as a direct limit of the cohomology of certain line bundles on a Bott--Samelson variety associated to an affine Kac-Moody group. This comparison yields vanishing of higher cohomology and shows that the global sections are generated over $\mathbf C[\mathfrak g^*]$ by their degree-zero part $V_G(\lambda)^*$. Using these results together with the Braverman--Kazhdan intertwiners constructed in earlier joint work with A. Slipper, we give an explicit generating set for $\mathbf C[T^*(\mathrm{SL}_n/[P,P])]$. Finally, in an appendix joint with Tom Gannon, we combine these results to show the affinization $\Spec(\C[T^*(SL_n/[P,P])])$ has terminal singularities.
\end{abstract}
\maketitle

\setcounter{tocdepth}{1}
\tableofcontents

\section{Introduction}
Let $\mathbf{C}$ be the ground field. Fix $G=SL_n(\mathbf{C})$ and let $P$ be a standard parabolic subgroup of $G$ with Levi subgroup $L$. A character $\lambda\in\mathbf{X}_{G,P}:=\Hom(L,\mathbf{G}_m)$ of $L$ extends trivially to a character of $P$, giving rise to a homogeneous line bundle $\Lc_{G/P}(\lambda):=G\times^P\mathbf{C}_\lambda$ over the partial flag variety $G/P$. More generally, for a finite-dimensional $L$-module $M$, we may associate a homogeneous vector bundle $\mathcal{L}_{G/P}(M):=G\times^PM$ over $G/P$. Since $L$ is reductive, an irreducible $L$-module $M:=V_L(\lambda)$ is indexed by an $L$-dominant highest weight $\lambda$. The Borel-Weil-Bott theorem then gives a complete description of all cohomology groups of $\mathcal{L}_{G/P}(V_L(\lambda)^*)$, where $V_L(\lambda)^*$ is the dual $L$-representation. Let $\mathbf{X}^+$ denote the set of $G$-dominant characters. Thus when $\lambda\in\mathbf{X}^+$, all higher cohomology groups of $\mathcal{L}_{G/P}(V_L(\lambda)^*)$ vanish and the zeroth cohomology group is $V_G(\lambda)^*$.

In this article, we first study the cohomology groups of the pullback $\mathcal{L}_{T^*(G/P)}(V_L(\lambda)^*)$ of the vector bundles $\mathcal{L}_{G/P}(V_L(\lambda)^*)$ along the projection $\pi:T^*(G/P)\rightarrow G/P$ for any $\lambda\in \mathbf{X}^+.$ We prove two main results:

\begin{thm*}[Theorem \ref{vanishing of higher cohomology}]
Let $G=SL_n(\mathbf{C})$ and $P$ a standard parabolic. For every $\lambda\in\mathbf X^+$ and every $i\geq1$,
\[
H^i\bigl(
T^*(G/P),
\mathcal L_{T^*(G/P)}(V_L(\lambda)^*)
\bigr)=0.
\]
\end{thm*}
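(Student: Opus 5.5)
The plan is to reduce to a line bundle on a variety of the form $G\times^B\mathfrak u_P$, realize that variety as an open subset of an affine Bott--Samelson variety, and prove vanishing there by Frobenius splitting. Throughout, $\mathfrak u_P$ is the nilradical of $\mathrm{Lie}(P)$, identified with $(\mathfrak g/\mathfrak p)^*$, so that $T^*(G/P)=G\times^P\mathfrak u_P$.

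\emph{Reduction to a line bundle.} Consider the map $p: G\times^B\mathfrak u_P\to G\times^P\mathfrak u_P$, whose fibres are copies of $P/B=L/B_L$. By Borel--Weil for $L$ applied fibrewise, $p_*\mathcal L(\lambda)=\mathcal L_{T^*(G/P)}(V_L(\lambda)^*)$ and $R^jp_*\mathcal L(\lambda)=0$ for $j>0$. The Leray spectral sequence then gives
\[
H^i\bigl(T^*(G/P),\mathcal L_{T^*(G/P)}(V_L(\lambda)^*)\bigr)\cong H^i\bigl(G\times^B\mathfrak u_P,\mathcal L(\lambda)\bigr).
\]
So it suffices to prove vanishing for the line bundle $\mathcal L(\lambda)$ on $X:=G\times^B\mathfrak u_P$, with $\lambda$ dominant.

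\emph{Compactification inside an affine Bott--Samelson variety.} I would use the type~A (Lusztig) embedding of nilpotent data into the affine flag variety of $\widehat{SL}_n$, which sends $x\in\mathfrak u_P$ to a lattice built from $t^{-1}x$. This should identify $X$ with an open subset of a Bott--Samelson variety $Z=Z_{\underline w}$ attached to a reduced word $\underline w$ in the affine Weyl group. The word is chosen so that the first letters give $G/B$ and the remaining letters give a compactification of the fibre $\mathfrak u_P$. Under this identification:
\begin{enumerate}
\item the complement $D=Z\setminus X$ is a union of boundary divisors $Z_{\underline w[\hat j]}$;
\item $\mathcal L(\lambda)$ extends to a line bundle $\mathcal L_Z(\lambda)$ on $Z$, namely a pullback of the weight $\lambda$ viewed as an affine Kac--Moody weight of level $0$.
\end{enumerate}
Since $X=Z\setminus D$ is the complement of an effective divisor, with $Z$ projective and $X$ quasi-affine over an affine base, we get
\[
H^i(X,\mathcal L(\lambda))=\varinjlim_m H^i\bigl(Z,\mathcal L_Z(\lambda)\otimes\mathcal O_Z(mD)\bigr).
\]

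\emph{Vanishing on $Z$.} I would spread everything out over $\mathbf Z$ and reduce mod $p$, passing back to characteristic zero by semicontinuity. In characteristic $p$, $Z$ is Frobenius split by a $(p-1)$-st power of a section vanishing on all boundary divisors and on an anti-canonical complement. Combining this with the standard Kumar / Mehta--Ramanathan argument gives $H^i(Z,\mathcal M)=0$ for $i>0$ whenever $\mathcal M=\mathcal N\otimes\mathcal O(\sum a_jZ_j)$, with $\mathcal N$ a nef line bundle (a pullback of a dominant weight along the partial products) and $a_j\ge0$. So the task is to write $\mathcal L_Z(\lambda)\otimes\mathcal O_Z(mD)$ in this form for all $m\gg0$. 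The divisor $mD$ is effective and supported on boundary components, which is compatible with the splitting.

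\emph{Expected obstacle.} The main difficulty is that the level-$0$ weight $\lambda$ is not dominant for the affine group. Its restriction to the letters of $\underline w$ corresponding to the simple affine reflection $s_0$ is negative, so $\mathcal L_Z(\lambda)$ is not nef. My first approach is to compute explicitly the line bundles $\mathcal O_Z(Z_{\underline w[\hat j]})$ in terms of the standard generators $\mathcal L_j(\varpi)$ on Bott--Samelson varieties. I would then check that adding $mD$ for $m$ large (or large enough multiples of the components over the $\mathfrak u_P$ part) makes every $s_0$-coefficient nonnegative, while keeping the $G$-coefficients equal to $\lambda$, which is dominant. This combinatorial verification is type~A specific and depends on the precise choice of word $\underline w$. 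If a single $m$ does not suffice uniformly, I would instead use that the transition maps in the direct limit are compatible with the splitting, and show vanishing on a cofinal subsystem.
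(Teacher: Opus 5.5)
Your first step, pushing forward along the $P/B$-fibration $G\times^B\mathfrak u_P\to G\times^P\mathfrak u_P$, is correct; it is the paper's Lemma~\ref{identify with T(G/P)}. The gap is the compactification step, on which everything afterwards rests. $X=G\times^B\mathfrak u_P$ is \emph{not} an open subset of a Bott--Samelson variety whose complement is a union of boundary divisors, and this already fails for $n=2$, $P=B$:
\begin{itemize}
\item The natural compactification of $X=T^*\mathbf P^1$ is $Z(1,0)\cong\mathbf F_2$, which is the paper's $Y_B=Z_B$. But $X$ is the complement of a section of self-intersection $+2$, whereas the boundary divisors of $Z(1,0)$ are a fibre and the $(-2)$-section.
\item No other word helps. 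Deleting boundary divisors from a two-letter Bott--Samelson surface yields $\mathbf A^1\times\mathbf P^1$, an affine surface, or the total space of $\mathcal O(2)$, and none of these is $T^*\mathbf P^1$.
\end{itemize}
The structural reason is that the boundary is $V(\sigma_P)$ with $\sigma_P=\det(\mathrm{pr}_{-1}|_{\mathcal S_n})$, and this section is nonzero at the $\widehat B$-fixed base point. So $V(\sigma_P)$ is $G$-stable but not $G[[t]]$-stable: at the point $(F_\bullet,X)$, the element $1+tY$ replaces $\theta_P$ by $(1+YX)\theta_P$. Hence it cannot be a union of $\widehat B$-stable boundary divisors.

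Moreover, for $n\ge 3$ the compactification $Y_P$ is not itself Bott--Samelson. The map $\rho\colon Z_P\to Y_P$ has a positive-dimensional fibre over the base point, and that point lies in $X$. So the boundary-compatible Frobenius splitting has nothing to act on.

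Independently, the vanishing criterion you quote for $\mathcal N\otimes\mathcal O(\sum_j a_jZ_j)$ with $\mathcal N$ nef and $a_j\ge 0$ is false. In $Z(1,0)\cong\mathbf F_2$ the boundary divisor $Z_2$ is the $(-2)$-curve. Then $H^1(\mathcal O(Z_2))$ surjects onto $H^1(\mathbf P^1,\mathcal O(-2))\neq 0$.

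The missing idea is exactly what removes your ``expected obstacle'': extend at level one rather than level zero, and take $\mathrm{div}(\sigma_P)$ as the boundary. The paper does the following.
\begin{itemize}
\item It realizes $X$ as $D(\sigma_P)$ inside the smooth lattice-flag variety $Y_P$, which is a tower of projective bundles.
\item It extends the weight-$\varpi_j$ line bundle from $G/B$ by $\mathcal L_{P,j}=\det(\mathcal S_j)^\vee$. This pulls back to $Z_P$ as the bundle of affine weight $\hat\varpi_j=\hat\varpi_0+\varpi_j$ at the last letter of $c_j(P)$.
\item It observes that $\mathcal O(\mathrm{div}\,\sigma_P)=\mathcal A_P$ pulls back to weight $\hat\varpi_0$ at the final letter (Lemma~\ref{pullback along rho}).
\end{itemize}
Hence every twist $\mathcal M_P(\lambda,r)$ with $r\ge 0$ pulls back to a nonnegatively marked bundle.

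In your language, the level-zero extension $\bigotimes_j(\mathcal L_{P,j}\otimes\mathcal A_P^{-1})^{\otimes a_j}$ twisted by $m\,\mathrm{div}(\sigma_P)$ is exactly $\mathcal M_P(\lambda,m-\sum_j a_j)$. This is nonnegatively marked once $m\ge\sum_j a_j$. So your guess that large multiples of the boundary repair the $s_0$-coefficients is right, but only for this non-$\widehat B$-stable divisor, and no combinatorial check is needed.

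The argument then stays in characteristic zero, with no reduction mod $p$:
\begin{itemize}
\item $R\rho_*\mathcal O_{Z_P}=\mathcal O_{Y_P}$, since $Y_P$ is smooth.
\item Theorem~\ref{Bott-Borel for Bott-Samelson} kills the higher cohomology of nonnegatively marked bundles on $Z_P$.
\item $H^i(D(\sigma_P),-)=\varinjlim_r H^i(Y_P,\mathcal M_P(\lambda,r))$.
\end{itemize}
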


\begin{thm*}[Theorem \ref{generation of zeroeth cohomology}]
Let $G=SL_n(\mathbf{C})$ and $P$ a standard parabolic. For every $\lambda\in\mathbf X^+$, the natural map
\begin{equation*}
\phi_\lambda:
\C[\mathfrak g^*]\otimes
H^0\bigl(G/P,\mathcal L_{G/P}(V_L(\lambda)^*)\bigr)
\longrightarrow
H^0\bigl(
T^*(G/P),
\mathcal L_{T^*(G/P)}(V_L(\lambda)^*)
\bigr),
\end{equation*}
is a surjective homomorphism of graded $G$-equivariant
$\C[\mathfrak g^*]$-modules.
\end{thm*}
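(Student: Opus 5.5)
The plan is to deduce surjectivity of $\phi_\lambda$ from a vanishing statement, using the same affine Bott--Samelson comparison that gives Theorem \ref{vanishing of higher cohomology}.

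\emph{Step 1: reduce to the restriction map from $G/P\times\mathfrak g^*$.} Write $T^*(G/P)=G\times^P(\mathfrak g/\mathfrak p)^*$. This is the closed subvariety of $G/P\times\mathfrak g^*$ cut out by the vanishing of the tautological section of the bundle $\mathcal L_{G/P}(\mathfrak p^*)$. Under this embedding, $\phi_\lambda$ is the restriction map
\[
H^0\bigl(G/P\times\mathfrak g^*,\mathcal E\bigr)\longrightarrow H^0\bigl(T^*(G/P),\mathcal E|_{T^*(G/P)}\bigr),\qquad \mathcal E=\mathcal L_{G/P}(V_L(\lambda)^*)\boxtimes\mathcal O_{\mathfrak g^*}.
\]
Indeed, $H^0(G/P\times\mathfrak g^*,\mathcal E)=\C[\mathfrak g^*]\otimes V_G(\lambda)^*$ by Borel--Weil--Bott, since $\lambda\in\mathbf X^+$. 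This identification also shows that $\phi_\lambda$ is graded, $G$-equivariant and $\C[\mathfrak g^*]$-linear.

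\emph{Step 2: reduce surjectivity to vanishing.} The Koszul complex $\wedge^\bullet\mathcal L_{G/P}(\mathfrak p)\otimes\mathcal E$ resolves the structure sheaf of $T^*(G/P)$ twisted by $\mathcal E$. Chasing it shows that surjectivity follows from
\[
H^i\bigl(G/P,\ \wedge^i\mathfrak p\otimes\mathrm{Sym}^\bullet(\mathfrak g)\otimes V_L(\lambda)^*\bigr)=0,\qquad i\ge1.
\]
This is a Kostant/Broer-type vanishing. I do not expect to be able to prove it directly for arbitrary $P$, so I would not rely on this route alone.

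\emph{Step 3: the intended route via Bott--Samelson varieties.} I would use the comparison established earlier in the paper. It expresses
\[
H^0\bigl(T^*(G/P),\mathcal L_{T^*(G/P)}(V_L(\lambda)^*)\bigr)
\]
as a direct limit of $H^0(Z_{\mathbf w},\mathcal L)$, where $Z_{\mathbf w}$ runs over Bott--Samelson--Demazure--Hansen varieties for the affine Kac--Moody group of $SL_n$ and $\mathcal L$ is a suitable line bundle. The limit is taken over reduced words $\mathbf w$ growing in length, and the degree filtration on the cotangent side corresponds to the $t$-adic (loop) filtration.

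On each $Z_{\mathbf w}$ there is a Frobenius splitting compatibly splitting the relevant boundary divisors. It yields two things:
\begin{enumerate}
\item surjectivity of restriction maps $H^0(Z_{\mathbf w},\mathcal L)\to H^0(Z_{\mathbf w'},\mathcal L)$ for subwords $\mathbf w'\subset\mathbf w$;
\item surjectivity of the product maps $H^0(\mathcal L_1)\otimes H^0(\mathcal L_2)\to H^0(\mathcal L_1\otimes\mathcal L_2)$ for the line bundles that occur.
\end{enumerate}
Applying (2) with $\mathcal L_1$ the line bundle computing $\C[\mathfrak g^*]$ in degree $d$ (the trivial-$\lambda$ case) and $\mathcal L_2$ the one computing $V_G(\lambda)^*$ in degree zero, I would show that for each finite $\mathbf w$ the image of $\C[\mathfrak g^*]_{\le d}\otimes V_G(\lambda)^*$ is all of $H^0(Z_{\mathbf w},\mathcal L)$. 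Passing to the direct limit is compatible with the gradings, so this gives surjectivity of $\phi_\lambda$ degree by degree.

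Proving (1) and (2) in characteristic $p\gg0$ and lifting to $\C$ by semicontinuity, as in Kumar's treatment of normality of nilpotent orbit closures, uses Theorem \ref{vanishing of higher cohomology} to control the base change.

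\emph{Main obstacle.} The hardest step is identifying precisely which line bundles on $Z_{\mathbf w}$ compute the degree-$d$ piece of $\C[\mathfrak g^*]$ and which compute $V_G(\lambda)^*$, and checking that their tensor product is the bundle computing the $\lambda$-twisted cotangent sections. Once that bookkeeping is fixed, surjectivity of the product map follows from the Frobenius splitting argument, Mehta--Ramanathan style. If the bookkeeping fails, I would fall back on proving the vanishing of Step 2 by induction on $\mathrm{Sym}$-degree, feeding in Theorem \ref{vanishing of higher cohomology} at each stage.
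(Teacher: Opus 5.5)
Your Step 1 is correct. Step 2 only restates the problem as a vanishing you leave unproven. Your fallback of inducting on $\mathrm{Sym}$-degree also cannot work: $\mathrm{Sym}(\mathfrak g)$ is a $G$-module, so it factors out of the cohomology. What is actually needed is $H^i(G/P,\mathcal L_{G/P}(\wedge^i\mathfrak p\otimes V_L(\lambda)^*))=0$, and Theorem~\ref{vanishing of higher cohomology} says nothing about that.

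The real gap is in Step 3, exactly at the ``bookkeeping'' you flag, and it is more than bookkeeping.
\begin{itemize}
\item No line bundle on the Bott--Samelson variety computes $\C[\mathfrak g^*]$. In the trivial-$\lambda$ case the comparison computes $\C[G\times^B\mathfrak u_P]\cong\C[T^*(G/P)]$, filtered by pole order along $V(\sigma_P)$. The claim that this ring is the image of $\C[\mathfrak g^*]$ is precisely the case $\lambda=0$ of the theorem (equivalently, normality of $\overline{\mathcal O}_P$ plus birationality of the Springer map). So the argument is circular at that point.
\item The sections of $\mathcal M_P(\lambda,0)$, whose restriction to $D(\sigma_P)$ is $\pi^*\mathcal L_{G/B}(\lambda)^*$, are not $V_G(\lambda)^*$. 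Already for $P=B$, $n=3$, $\lambda=\varpi_2$, the Pl\"ucker form $(t^{-1}e_a)^*\wedge(t^{-2}e_b)^*$ on $\wedge^2Q$ restricts on $D(\sigma_B)$ to $e_a^*(v_1)e_b^*(Xv_2)-e_a^*(v_2)e_b^*(Xv_1)$. This is a nonzero section of fiber degree one, so it is not in $V_G(\varpi_2)^*$.
\end{itemize}
So even granting product surjectivity (which you assert via Frobenius splitting without justification), you would only get generation over $\C[T^*(G/P)]$ by a space strictly larger than $V_G(\lambda)^*$, which is not the statement. Note also that the paper's direct limit is over $r$ on the single variety $Z_P$, with transition maps given by $\sigma_P$; it is not over words of growing length.

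The missing idea is to reduce to $P=B$, where generation by $\C[\mathfrak g^*]\otimes V_G(\lambda)^*$ is already known \cite[Proposition 2.6]{Broer1993}. The $\C[\mathfrak g^*]$-generation is carried along a restriction map, not produced by a product decomposition on $Z_P$. Your item (1) is the right tool, and it needs no Frobenius splitting:
\begin{itemize}
\item $\mathbf i_P$ is a subword of $\mathbf i_B$ whose deleted positions carry zero marking. So the restriction $H^0(Z_B,\mathcal N_B(\lambda,r))\to H^0(Z_P,\mathcal N_P(\lambda,r))$ is dual to the inclusion $V_{\mathbf i_P,\mathbf d_P}\hookrightarrow V_{\mathbf i_B,\mathbf d_B}$ of generalized Demazure modules, hence surjective by the Lakshmibai--Littelmann--Magyar Borel--Weil theorem.
\item Using $\rho_*\mathcal O_{Z_P}=\mathcal O_{Y_P}$, this gives surjectivity of $H^0(Y_B,\mathcal M_B(\lambda,r))\to H^0(Y_P,\mathcal M_P(\lambda,r))$ (Proposition~\ref{restriction is surjective on Y_P}).
\item Since $\iota^*\sigma_B=\sigma_P$, these maps commute with the transition maps. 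Passing to the limit, the restriction $H^0(G\times^B\mathfrak u,\pi^*\mathcal L_{G/B}(\lambda)^*)\to H^0(G\times^B\mathfrak u_P,\pi_B^*\mathcal L_{G/B}(\lambda)^*)$ is surjective.
\item Composing with Broer's surjection and Lemma~\ref{identify with T(G/P)} gives surjectivity of $\phi_\lambda$.
\end{itemize}
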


Computing the cohomology groups of the vector bundles, $\mathcal{L}_{T^*(G/P)}(V_L(\lambda)^*)$ with dominant weights $\lambda\in\mathbf{X}^+$, over $T^*(G/P)$ has a long history. Graham first proved both theorems for $P=B$ and minuscule weights \cite{Graham1992} by building on work of Hesselink \cite{Hesselink1980}. Broer then proved them for $P=B$ and arbitrary dominant weights using spectral sequences \cite{Broer1993} and later established the higher-cohomology vanishing for arbitrary parabolics and line bundles over $T^*(G/P)$, corresponding to $\lambda\in\mathbf{X}_{G,P}^+:=\mathbf{X}^+\cap\mathbf{X}_{G,P}$, using Grauert--Riemenschneider in \cite{Broer1994}. For arbitrary parabolics in type $A$, Hague proved the higher-cohomology vanishing for regular dominant weights \cite[Theorem 4.23]{HagueBK}. Further related results and alternative approaches, including methods via Frobenius splitting in positive characteristic, appear in \cite{AndersenJantzen1984, Broer1997,Hague2013}. Finally, Kato's
vanishing theorem for type-$A$ root ideals implies the higher-cohomology
vanishing for every $\lambda\in\mathbf{X}^+$ and every
parabolic $P$ \footnote{The higher cohomology vanishing for $T^*(G/P)$ is only stated for $\lambda\in\mathbf{X}_{G,P}^+$ in \cite[Cor 5.4]{Kato2025}} by
\cite[Theorem 5.1]{Kato2025} and Lemma \ref{identify with T(G/P)}. To the author's knowledge, however, neither the higher-cohomology vanishing nor the generation of zeroth cohomology had previously been stated in the present form for arbitrary $G$-dominant weights $\lambda\in\mathbf{X}^+$ and arbitrary parabolic subgroups $P$.

These vector bundles also have a categorical interpretation. When $P=B$, their derived pushforwards along the Springer resolution are the Andersen--Jantzen sheaves \cite{AndersenJantzen1984}. Bezrukavnikov showed that the dominant Andersen--Jantzen sheaves form a quasi-exceptional set generating $D^b\operatorname{Coh}^G(\mathcal N)$ \cite{Bezrukavnikov2003}. For general $P$, Achar and Riche showed that the larger family of homogeneous bundles indexed by all $L$-dominant weights generates $D^b\operatorname{Coh}^{G\times\mathbf G_m}(T^*(G/P))$; their mutations give the exceptional objects defining the parabolic exotic $t$-structure studied in \cite{AcharRiche2018,AcharCooneyRiche2018}. Since the Richardson orbit closure $\overline{\mathcal O}_P$ is affine, Theorem~\ref{vanishing of higher cohomology} implies that the derived pushforwards of the bundles along the moment map $T^*(G/P)\to\overline{\mathcal O}_P$ are concentrated in degree zero. These coherent sheaves may therefore be viewed as parabolic analogues of Andersen--Jantzen sheaves, and Theorem~\ref{generation of zeroeth cohomology} gives explicit generators for their global sections as $\mathbf C[\mathfrak g^*]$-modules.

Our proofs of both theorems are based on S. Kato's wonderful idea to compactify the closely related variety $G\times^B\mathfrak{u}_P$, which is the pullback of $T^*(G/P)$ along $G/B\to G/P$. Although Theorem~\ref{vanishing of higher cohomology} follows from \cite[Theorem 5.1]{Kato2025}, and \cite[Corollary 5.13]{Kato2025} provides an important step toward Theorem~\ref{generation of zeroeth cohomology}, we give complete, self-contained proofs of all results because Kato's construction and arguments simplify substantially in our setting.\footnote{Kato works more generally with root ideals replacing $\mathfrak{u}_P$.}  In particular, we realize Kato's compactification in terms of flags of lattices \cite{Kamnitzer2024} and then show directly that it is a tower of projective vector bundles whose ranks are determined by the block sizes of $P$. This description has the advantage of making the resolution by generalized Bott--Samelson varieties transparent. We then construct line bundles on the generalized Bott--Samelson variety whose cohomology agrees with that of the vector bundles on $T^*(G/P)$ considered above; see Equation \ref{comparison of cohomology}. The main novelty of our argument is that both main theorems then follow naturally from the Borel--Weil theorem for generalized Bott--Samelson varieties, Theorem \ref{Bott-Borel for Bott-Samelson} and \cite{LakshmibaiLittelmannMagyar2002}. We conjecture that the analogue of Theorem~\ref{generation of zeroeth cohomology} holds for arbitrary reductive group $G$ whenever the Springer map $T^*(G/P)\to\overline{\mathcal O}_P$ is birational and the associated Richardson orbit closure $\overline{\mathcal O}_P$ is normal. These hypotheses are necessary since already for the zero weight, the assertion fails if the Springer map has degree greater than one or if $\overline{\mathcal O}_P$ is not normal.

These cohomological results provide the first of two key ingredients in establishing two new properties for the variety $\overline{T^*(SL_n/[P,P])}^{\mathrm{aff}}$. The other ingredient is the family of Braverman--Kazhdan intertwiners constructed in \cite{GrantcharovSlipper2026}. These intertwiners produce isomorphisms, for any pair of parabolics $P,P^w$ whose Levi subgroups are conjugate by $w\in S_k$, where $k$ is the number of blocks of $P$, of
\begin{equation*}
    \Phi_w:\overline{T^*(SL_n/[P,P])}^{\text{aff}}\rightarrow \overline{T^*(SL_n/[P^w,P^w])}^{\text{aff}}.
\end{equation*}
In \cite{GrantcharovSlipper2026}, we also showed that these intertwiners satisfy the Coxeter relations for $S_k$ and a twisted $SL_n\times L^{\mathrm{ab}}$-equivariance relation. Write $\Phi_w^*$ for the induced map on coordinate algebras. Using the equivariance properties of the intertwiners together with Theorem~\ref{generation of zeroeth cohomology}, we obtain the following explicit generating set for the algebra $\mathbf C[T^*(SL_n/[P,P])]$, which is a parabolic analogue of \cite[Lemma 3.6.2]{GinzburgRiche2015}: 

\begin{thm*}[Theorem \ref{generators of R}]
The algebra $\mathbf{C}[T^*(SL_n/[P,P])]$ is generated by the image of the comoment map $\mu_{\mathfrak{sl}_n\times(\mathfrak{l}^{\mathrm{ab}})^*}: \mathbf{C}[\mathfrak{sl}_n^*\times(\mathfrak{l}^{\text{ab}})^*]\rightarrow\mathbf{C}[T^*(SL_n/[P,P])]$ and by $\Phi_w^*\bigl(\mathbf C[SL_n/[P^w,P^w]]\bigr)$, for each $w\in S_k$.
\end{thm*}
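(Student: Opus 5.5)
The plan is to decompose $R:=\mathbf{C}[T^*(SL_n/[P,P])]$ under the free right action of the torus $L^{\mathrm{ab}}=P/[P,P]$ and reduce to the graded modules of global sections on $T^*(G/P)$. Since $T^*(G/[P,P])\to T^*(G/P)$ factors through the Hamiltonian reduction by $L^{\mathrm{ab}}$, we have $T^*(G/[P,P])=G\times^{[P,P]}(\mathfrak g/\mathfrak p_{[P,P]})^*$. The $L^{\mathrm{ab}}$-weight spaces give
\[
R\cong\bigoplus_{\lambda\in\mathbf X_{G,P}} H^0\bigl(G\times^P (\mathfrak{g}/[\mathfrak p,\mathfrak p])^*,\ \mathcal{L}(\lambda)\bigr),
\]
and the moment map for $L^{\mathrm{ab}}$ identifies $(\mathfrak g/[\mathfrak p,\mathfrak p])^*$ as an extension of $(\mathfrak l^{\mathrm{ab}})^*$ by $(\mathfrak g/\mathfrak p)^*=\mathfrak u_P$. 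Using the comoment map $\mathbf C[(\mathfrak l^{\mathrm{ab}})^*]$ and filtering by its degree, I would reduce generation of $R$ to generation of the associated graded ring $\bigoplus_\lambda H^0(T^*(G/P),\mathcal L_{T^*(G/P)}(\lambda))\otimes\mathbf C[(\mathfrak l^{\mathrm{ab}})^*]$. Here Theorem \ref{vanishing of higher cohomology} is used to guarantee that taking sections commutes with the filtration, i.e.\ $\mathrm{gr}$ of sections equals sections of $\mathrm{gr}$; the $H^1$ of the relevant bundles vanishes.

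For $\lambda\in\mathbf X_{G,P}^+$, Theorem \ref{generation of zeroeth cohomology} says the $\lambda$-component is generated over $\mathbf C[\mathfrak g^*]$ by $V_G(\lambda)^*=H^0(G/P,\mathcal L(\lambda))$. That degree-zero part is exactly the $\lambda$-isotypic piece of $\mathbf C[SL_n/[P,P]]$, which is $\Phi_e^*$ of the coordinate ring. So the dominant chamber is generated by the comoment image together with $\mathbf C[SL_n/[P,P]]$.

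The remaining weights $\lambda\in\mathbf X_{G,P}$ are not $P$-dominant. For these I use the intertwiners. By the twisted $SL_n\times L^{\mathrm{ab}}$-equivariance from \cite{GrantcharovSlipper2026}, $\Phi_w^*$ carries the $\mu$-weight space of $\mathbf C[\overline{T^*(SL_n/[P^w,P^w])}^{\mathrm{aff}}]$ to the $w^{-1}\mu$-weight space of $R$, and intertwines the comoment maps up to the twist by $w$ on $(\mathfrak l^{\mathrm{ab}})^*$. Every $\lambda\in\mathbf X_{G,P}\cong\mathbf Z^{k}/\mathbf Z$ is $S_k$-conjugate, via some $w\in S_k$, to a weight $\mu$ that is dominant for $P^w$ (reorder the block coordinates decreasingly). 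Applying the previous step to $P^w$, the $\mu$-component of the coordinate ring for $P^w$ is generated by $\mathbf C[SL_n/[P^w,P^w]]_\mu$ and the $P^w$-comoment image. Transporting this by $\Phi_w^*$ shows that the $\lambda$-component of $R$ is generated by $\Phi_w^*(\mathbf C[SL_n/[P^w,P^w]])$ and the comoment image of $R$. Here I use that normality lets $R$ equal the coordinate ring of the affinization.

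The main obstacle is the first step: making the reduction from $T^*(G/[P,P])$ to the associated graded on $T^*(G/P)$ precise. Concretely, one must show that each weight component of $R$ is generated as a $\mathbf C[\mathfrak g^*\times(\mathfrak l^{\mathrm{ab}})^*]$-module by its degree-zero part whenever $\lambda$ is $P$-dominant. I would first try a graded Nakayama argument on the short exact sequence of bundles $0\to\mathfrak u_P\text{-part}\to(\mathfrak g/[\mathfrak p,\mathfrak p])^*\to(\mathfrak l^{\mathrm{ab}})^*\to0$, using the vanishing theorem to lift generators.
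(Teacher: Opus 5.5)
Your proposal is correct and follows essentially the same route as the paper: decompose $R$ into $L^{\mathrm{ab}}$-weight spaces, use the intertwiners $\Phi_w^*$ (which identify $(R^w)_{w\lambda}$ with $R_\lambda$ and carry comoment images to comoment images) to reduce to weights in $\mathbf X_{G,P^w}^+$, and for such weights use the higher-cohomology vanishing for $\mathcal L_{T^*(G/P)}(\lambda)^*$ to pass from $G\times^P(\mathfrak g/[\mathfrak p,\mathfrak p])^*$ to $T^*(G/P)$, then invoke Theorem \ref{generation of zeroeth cohomology}. The paper packages that middle step as graded Nakayama plus Lemma \ref{Specialization}, a Koszul resolution made $\Gamma$-acyclic by exactly your $\mathfrak l^{\mathrm{ab}}$-filtration, so it is equivalent to your $\mathrm{gr}$-of-sections argument; note that in both versions this step holds only for $\lambda\in\mathbf X_{G,P}^+$, where the $H^1$-vanishing is available, so your first paragraph's claim for all weights $\lambda$ should be restricted to the dominant ones.
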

When $P = B$, the explicit generating set of Theorem \ref{generators of R} played an important role in Gannon's proof of the \textit{Ginzburg-Kazhdan conjecture} that $\overline{T^*(G/U)}^{\text{aff}}$ has symplectic singularities for any reductive group $G$, where $U=[B,B]$ is the unipotent radical of a Borel subgroup $B\subset G$. His proof generalizes an insight of Jia \cite{Jia2025}, who first proved the Ginzburg-Kazhdan conjecture in the case $G = SL_n$ by proving that the codimension of the singular locus of $\overline{T^*(SL_n/U)}^{\text{aff}}$ is at least four. From this fact, along with its analogue for general reductive groups proved in \cite{Gannon2024}, it follows essentially immediately that $\overline{T^*(G/U)}^{\text{aff}}$ moreover has \textit{terminal} symplectic singularities. 

Recently, it has been shown \cite{FuLiu2025} that $\overline{T^*(G/[P, P])}^{\text{aff}}$ has symplectic singularities for an arbitrary reductive group $G$. Using this, in an appendix written jointly with Tom Gannon, we prove that, when $G = \SL_n$, $\overline{T^*(G/[P, P])}^{\text{aff}}$ moreover has \textit{terminal} symplectic singularities: 

\begin{thm*}[Theorem \ref{terminal singularities}]
    The variety $\overline{T^*(SL_n/[P,P])}^{\text{aff}}$ has terminal singularities.
\end{thm*}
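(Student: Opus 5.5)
The plan is the standard terminality criterion for symplectic singularities. By \cite{FuLiu2025}, $X:=\overline{T^*(SL_n/[P,P])}^{\mathrm{aff}}$ has symplectic singularities. By Namikawa's criterion, a variety with symplectic singularities has terminal singularities if and only if its singular locus has codimension at least $4$. So it suffices to show $\operatorname{codim}_X X^{\mathrm{sing}}\geq 4$. Following Jia and Gannon, I would get this from the covering of $X$ by translates of a smooth open piece under the Braverman--Kazhdan intertwiners.

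First, $T^*(SL_n/[P,P])$ is a smooth quasi-affine open subvariety of $X$. To see this, note that $SL_n/[P,P]$ is quasi-affine, so its cotangent bundle is quasi-affine. Its coordinate ring is finitely generated by Theorem~\ref{generators of R}. The open embedding $T^*(SL_n/[P,P])\hookrightarrow X$ then follows from normality together with the vanishing of $H^1$ obtained from Theorem~\ref{vanishing of higher cohomology}.

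Next, for each $w\in S_k$, the isomorphism $\Phi_w$ carries the smooth open set $T^*(SL_n/[P^w,P^w])\subset X^w$ back to a smooth open subset $U_w:=\Phi_w^{-1}(T^*(SL_n/[P^w,P^w]))$ of $X$. The union $U:=\bigcup_{w\in S_k}U_w$ is then contained in the smooth locus, so $X^{\mathrm{sing}}\subset X\setminus U$.

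The main step is to bound $\dim(X\setminus U)\le \dim X-4$. Consider the complement $Z=X\setminus T^*(SL_n/[P,P])$. Theorem~\ref{generators of R} shows that a point of $Z$ is detected by the vanishing of the pulled-back functions from $\mathbf{C}[SL_n/[P,P]]$ on the corresponding fibers. I would stratify $Z$ using the map $X\to \mathfrak{sl}_n^*\times(\mathfrak{l}^{\mathrm{ab}})^*$ given by the comoment map. Over the locus where the $(\mathfrak l^{\mathrm{ab}})^*$-coordinate is regular, meaning it avoids all root hyperplanes $\alpha=0$ for roots $\alpha$ of $SL_n$ not in $L$, I expect the intertwiners to act simply transitively on the relevant chart structure, so that $U$ covers this locus. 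Points outside $U$ must then lie over the hyperplane arrangement in $(\mathfrak l^{\mathrm{ab}})^*$, which already has codimension at least $1$. On each such hyperplane, the Coxeter relations for $S_k$ and the twisted $SL_n\times L^{\mathrm{ab}}$-equivariance reduce the analysis to a rank-one situation: a pair of adjacent blocks of sizes $a,b$, that is, an $SL_{a+b}$ with a maximal parabolic. There I would compute explicitly that the points not covered by $U_e\cup U_{s}$ form a locus of codimension at least $3$ inside the hyperplane fiber. Combining these gives total codimension at least $4$ in $X$.

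The main obstacle is this rank-one computation. One has to check that the $\Phi_s$-translate really covers the generic part of the boundary over the wall $\alpha=0$, and that the leftover locus is small. For $P=B$ this is Jia's computation for $SL_2$. For general blocks I would first try to reduce to the $SL_2$ case via the nilpotent and Levi directions using the graded generation statement in Theorem~\ref{generation of zeroeth cohomology}. Failing that, I would directly compute the dimensions of the Hamiltonian reduction fibers of $T^*(SL_{a+b}/[P,P])$ over $\alpha=0$.
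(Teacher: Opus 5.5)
\textbf{There is a genuine gap: the codimension-$4$ bound for $X\setminus U$ is never proved, and the wall analysis you sketch for it has holes beyond the rank-one computation you flag.}

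Your opening matches the paper's setup. Fu--Liu gives symplectic singularities, Namikawa's criterion reduces the theorem to a codimension bound, and the translates $U_w=\Phi_w^{-1}(T^*(SL_n/[P^w,P^w]))$ are smooth. (The open embedding $T^*(SL_n/[P,P])\hookrightarrow X$ is just the general fact that a quasi-affine scheme embeds openly into the spectrum of its global functions; no $H^1$-vanishing is involved.) The problems with the main step are these:
\begin{enumerate}
\item Only generic points of a single wall in $(\mathfrak l^{\mathrm{ab}})^*$ are treated. Over intersections of walls you would need the uncovered locus to have relative codimension $\geq 2$, $\geq 1$, and so on. An example is the whole fiber over $0$, which has codimension $k-1$ if the map is flat. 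Nothing in your plan addresses these deeper strata.
\item You would also need flatness of $X\to(\mathfrak l^{\mathrm{ab}})^*$ even to say that the preimage of a hyperplane has codimension one.
\item The Coxeter relations and twisted equivariance are identities among the maps $\Phi_w$. They do not give a local slice or product description of $X$ near a wall point in terms of $SL_{a+b}$ with a maximal parabolic, and that is what ``reduce to rank one'' requires.
\item For $k=2$ the only wall is the origin, so your ``rank-one computation'' is the full theorem for maximal parabolics; nothing has been reduced.
\end{enumerate}
Also, Jia's $P=B$ argument goes through the $SL$-quiver description, not an $SL_2$ computation. As the appendix notes, that description fails for general $P$.

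The paper avoids all of this with two ideas your plan lacks.
\begin{enumerate}
\item \emph{Parity.} By Kaledin and Namikawa, the singular locus of a symplectic singularity is a union of even-dimensional symplectic leaves, so it has no codimension-$3$ component. Hence codimension $\geq 3$ already suffices. Your target $\operatorname{codim}(X\setminus U)\geq4$ is strictly stronger than anything the paper proves or needs.
\item \emph{Stratify by $L^{\mathrm{ab}}$-stabilizers, not by walls.} The strata are defined by which spaces $A_{w,i}$ vanish; the stabilizer is constant on each stratum, by Theorem~\ref{generators of R}.
\begin{itemize}
\item Suppose $p$ has finite stabilizer. Then its weight set $\Sigma(p)$ spans $\mathbf X_{G,P}\otimes\mathbf Q$. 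Choose $\mu$ in the open cone they span with pairwise distinct block entries, and $w$ with $w\mu$ regular dominant; this gives $f\in R_\mu$ with $f(p)\neq0$. Theorem~\ref{generation of zeroeth cohomology}, via Lemma~\ref{Specialization} and graded Nakayama, gives $(R^w)_{w\mu}\subset(A^w)_{\varpi_{d_i^w}}R^w$ for every $i$. So $\Phi_w(p)$ lies over $SL_n/[P^w,P^w]$, that is, $p\in U_w$.
\item On a stratum with positive-dimensional stabilizer, pick a cocharacter $\gamma$ in it and an index $i$ with $\langle\varpi_{d_i},\gamma\rangle\neq0$. Three cuts follow. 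Cut by an element $a_1\in A_{\varpi_{d_i}}$, which is prime because $R$ is a UFD. Pass to $\gamma$-invariants, which drops dimension because a second element $\bar a_2$ has nonzero $\gamma$-degree. Cut by $\bar a_2\bar b$, where $b\in A_{w_0,k-i}$ has opposite weight. These give three dimension drops.
\end{itemize}
\end{enumerate}
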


\noindent\textbf{Acknowledgments.} The author thanks Shrawan Kumar, Scott Larson, Alberto San Miguel Malaney, Dan Nakano, and Aaron Slipper for many stimulating and productive discussions. 
The author is also especially grateful to Bill Graham for going through the arguments of the author's main results and providing lots of thoughtful feedback throughout.

AI Disclosure: In preparing this article, the author used ChatGPT 5.6 for proofreading. It was also used as an exploratory aid in comparing Kato's compactification of $G\times^B\mathfrak{u}_P$ \cite{Kato2025} with the generalized Bott-Samelson varieties. All text is human written and verified, and the author assumes full responsibility for its content.

\section{Cohomology on cotangent bundles of partial flag varieties}
In this section, we establish the cohomological results for homogeneous
vector bundles on cotangent bundles of partial flag varieties that will
be needed for the subsequent results. Our main results are the higher
cohomology vanishing in Theorem
\ref{vanishing of higher cohomology} and the generation theorem for
global sections in Theorem
\ref{generation of zeroeth cohomology}. As mentioned in the introduction,
our strategy is to first compactify the variety
$G\times^B\mathfrak{u}_P$, which is the pullback of
$T^*(G/P)\simeq G\times^P\mathfrak{u}_P$ along the projection
$G/B\rightarrow G/P$. By Borel--Weil and a Leray spectral sequence argument,
(Lemma \ref{identify with T(G/P)}), the relevant cohomologies on these
two varieties are identified. The compactification is denoted by $Y_P$,
and it has an explicit description in terms of lattice flags
(Definition \ref{Y_P definition}). Next, we show that a Bott--Samelson
variety $Z_P$ (\ref{Bott-Samelson for P}) for an affine Kac--Moody group
admits a proper birational morphism to $Y_P$. Finally, we show that the
vector bundles
$\mathcal{L}_{T^*(G/P)}(V_L(\lambda)^*)$, for
$\lambda\in\mathbf{X}^+$, correspond to line bundles on $Z_P$
associated with nonnegative markings (Lemmas
\ref{restriction of M_P to open} and \ref{pullback along rho}). This,
together with localization and comparison with the case $P=B$, allows
us to conclude both main theorems using the Borel--Weil theorem and
higher-cohomology vanishing for Bott--Samelson varieties
(Theorem \ref{Bott-Borel for Bott-Samelson}).

Fix $G=\mathrm{SL}_n(\C)$ and a maximal torus and a Borel subgroup
$T\subset B\subset G$. Let $\Phi^+$ be the corresponding set of positive
roots, and let $\Delta\subset\Phi^+$ be the set of simple roots. Let
$P=P_I\supset B$ be the standard parabolic subgroup associated with a
subset $I\subset\Delta$, let $L\supset T$ be its standard Levi subgroup,
and let $U_P$ be its unipotent radical. We write $\mathfrak g$, $\mathfrak p$,
and $\mathfrak u_P$ for their Lie algebras. Let $W$ be the Weyl group and $w_0\in W$ be the longest element.

Denote the character lattice and the set of dominant weights, respectively, by
\begin{align*}
    \mathbf X&:=\operatorname{Hom}_{\mathrm{alg}}(T,\mathbf G_m)\\
    \mathbf X^+&:= \{\lambda\in\mathbf X:
\langle\lambda,\alpha^\vee\rangle\geq0
\text{ for every }\alpha\in\Delta\}
\end{align*}
Similarly, denote the $P$-character lattice and set of $G$-dominant weights which are $P$-characters by
\begin{align*}
\mathbf X_{G,P}
&:=
\operatorname{Hom}_{\mathrm{alg}}(P,\mathbf G_m)
\simeq
\operatorname{Hom}_{\mathrm{alg}}(L/[L,L],\mathbf G_m).\\
\mathbf X_{G,P}^+&:=\mathbf X_{G,P}\cap\mathbf X^+.
\end{align*}
Note that by restricting to $T$, we may identify
$\mathbf X_{G,P}
=
\{\lambda\in\mathbf X:
\langle\lambda,\alpha^\vee\rangle=0
\text{ for every }\alpha\in I\}.$
Thus, if the parabolic $P$ corresponds to the partition $0=d_0<d_1<\dots<d_k=n$, i.e., the block sizes of its Levi are $s_i:=d_i-d_{i-1},1\leq i\leq k$, then we may identify 
$$\mathbf{X}_{G,P}=\bigoplus_{i=1}^{k-1}\mathbf{Z}\varpi_{d_i},\quad \mathbf{X}_{G,P}^+=\bigoplus_{i=1}^{k-1}\mathbf{Z}_{\geq0}\varpi_{d_i}$$
where $\varpi_i,1\leq i<n$ are the fundamental weights of $G$.

For a finite-dimensional $P$-module $M$, define
\[
\mathcal{L}_{G/P}(M):=G\times^P M,\quad \mathcal{L}_{G/P}(M)^*:=G\times^PM^*.
\]
We will reserve the notation $\mathcal{L}_X(\lambda)$ for line bundles
on a variety $X$ and $\mathcal{L}_X(M)$ for vector bundles on a variety
$X$.

Next, using the Killing form $\kappa:\mathfrak{sl}_n\xrightarrow{\sim}\mathfrak{sl}_n^*$, we may canonically identify $\mathfrak{u}_P\simeq(\mathfrak{g}/\mathfrak{p})^*$ in a $P$-equivariant way. Thus we have $SL_n$-equivariant morphisms
$$T^*(G/P)\simeq G\times^P \mathfrak{p}^\perp\simeq G\times^P\mathfrak{u}_P.$$

Let $$\pi_P:T^*(G/P)\rightarrow G/P$$ be the standard projection. Thus, for a finite-dimensional $P$-module $M$ we may consider the corresponding vector bundles on $T^*(G/P)$ by
\begin{equation}\label{vector bundle on T^*(G/P)}
    \mathcal{L}_{T^*(G/P)}(M):=\pi_P^*(\mathcal{L}_{G/P}(M))\simeq G\times^P(\mathfrak{u}_P\times M).
\end{equation}

\begin{lemma}\label{identify with T(G/P)}
Let $\lambda\in\mathbf X^+$ and set $A=V_L(\lambda)$, regarded as a
$P$-module via $P\twoheadrightarrow L$. Let 
\[
\pi_B:G\times^B\mathfrak u_P\rightarrow G/B
\qquad\text{and}\qquad
\pi_P:T^*(G/P)\rightarrow G/P
\]
be the natural projections. Then, for every $i\geq0$, there are
canonical $G$-module isomorphisms
\begin{align*}
H^i\bigl(
G\times^B\mathfrak u_P,
\pi_B^*\mathcal L_{G/B}(\lambda)^*
\bigr)
&\simeq
H^i\bigl(
T^*(G/P),
\pi_P^*\mathcal L_{G/P}(A^*)
\bigr),\\
H^i\bigl(
G/B,
\mathcal L_{G/B}(\lambda)^*
\bigr)
&\simeq
H^i\bigl(
G/P,
\mathcal L_{G/P}(A^*)
\bigr).
\end{align*}
\end{lemma}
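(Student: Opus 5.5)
Consider the natural map $q: G\times^B\mathfrak u_P\to T^*(G/P)=G\times^P\mathfrak u_P$, $[g,x]\mapsto[g,x]$. It is well defined because $\mathfrak u_P$ is $P$-stable. It is a locally trivial fibration with fiber $P/B=L/(B\cap L)$, the flag variety of $L$; indeed it is the base change of $G/B\to G/P$ along $\pi_P$. The plan is to compute the derived pushforward $Rq_*\bigl(\pi_B^*\mathcal L_{G/B}(\lambda)^*\bigr)$, show it equals $\pi_P^*\mathcal L_{G/P}(A^*)$ concentrated in degree $0$, and then use the Leray spectral sequence for $q$. The second isomorphism is the special case of the same argument for $p:G/B\to G/P$ (equivalently, restrict to the zero section $\mathfrak u_P=0$). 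Alternatively it follows directly from Borel--Weil--Bott on both sides, since both sides are $V_G(\lambda)^*$ in degree $0$ and vanish in higher degrees.

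For the pushforward, I would use the identity
\[
G\times^B(\mathfrak u_P\times \mathbf C_{-\lambda}) \;=\; G\times^P\bigl(P\times^B(\mathfrak u_P\times\mathbf C_{-\lambda})\bigr),
\]
together with the fact that $\mathfrak u_P$ is a $P$-module. By the tensor identity, the fiberwise problem reduces to the cohomology of $P/B$ with coefficients in $\mathcal L_{P/B}(\lambda)^*$. The sheaf on $G\times^B\mathfrak u_P$ is $q^*$ of nothing yet; rather, it is the pullback along the flat base change of $\mathcal L_{G/B}(\lambda)^*$ on $G/B$. Flat base change along $\pi_P$ (which is affine and flat) therefore gives
\[
Rq_*\,\pi_B^*\mathcal L_{G/B}(\lambda)^*\simeq \pi_P^*\,Rp_*\mathcal L_{G/B}(\lambda)^*.
\]
The key input is then $R^jp_*\mathcal L_{G/B}(\lambda)^*=\mathcal L_{G/P}\bigl(H^j(P/B,\mathcal L_{P/B}(\lambda)^*)\bigr)$. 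By Borel--Weil for the reductive group $L$ (unipotent radical acting trivially), this equals $\mathcal L_{G/P}(V_L(\lambda)^*)$ for $j=0$ and vanishes for $j>0$. This holds because $\lambda$, being $G$-dominant, is in particular $L$-dominant. Here one must be careful with dualization conventions, namely that $H^0(L/B_L,\mathcal L(\lambda)^*)$ is $V_L(\lambda)^*$ in the paper's normalization, matching the $G$-level statement $H^0(G/B,\mathcal L_{G/B}(\lambda)^*)=V_G(\lambda)^*$ quoted in the introduction.

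Given this, the Leray spectral sequence $H^a(T^*(G/P),R^bq_*)\Rightarrow H^{a+b}$ degenerates, giving the first isomorphism; all maps are $G$-equivariant and canonical. The main obstacle is justifying the base change step cleanly. I would do so either by noting that the diagram with $q,p,\pi_B,\pi_P$ is cartesian and $\pi_P$ is flat, or, more concretely, by computing on the $G$-equivariant level via the induction functor and the transitivity $\mathrm{Ind}_B^G=\mathrm{Ind}_P^G\circ\mathrm{Ind}_B^P$ applied to $\mathbf C[\mathfrak u_P]\otimes\mathbf C_{-\lambda}$. In that approach, $\mathbf C[\mathfrak u_P]$ is a $P$-module and factors out of $R\mathrm{Ind}_B^P$ by the tensor identity. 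Since each graded piece of $\mathbf C[\mathfrak u_P]$ is finite-dimensional, commuting $R\mathrm{Ind}$ with the direct sum is harmless.
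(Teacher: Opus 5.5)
Your argument is correct and is essentially the paper's proof. The paper uses that $\pi_B,\pi_P$ are affine to split both sides into graded pieces $H^i(G/B,\mathcal L_{G/B}(S^j(\mathfrak u_P^*)\otimes\mathbf C_{-\lambda}))$ and $H^i(G/P,\mathcal L_{G/P}(S^j(\mathfrak u_P^*)\otimes A^*))$, then applies the tensor identity, Borel--Weil on $P/B\simeq L/(L\cap B)$, and the degenerate Leray (transitivity of induction) spectral sequence for $G/B\to G/P$ piece by piece --- this is exactly your alternative route; your primary route, running the same degeneration for $q$ on the total space via flat base change of $Rp_*$ along $\pi_P$, is a valid repackaging of this that avoids the graded decomposition.
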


\begin{proof}
Set $M_j=S^j(\mathfrak u_P^*)$. Since $\pi_B$ and $\pi_P$ are affine morphisms, the affine projection formula gives canonical isomorphisms of graded $G$-modules
\begin{align*}
H^i\bigl(
G\times^B\mathfrak u_P,
\pi_B^*\mathcal L_{G/B}(\lambda)^*
\bigr)
&\simeq
\bigoplus_{j=0}^{\infty}H^i\bigl(
G/B,
\mathcal L_{G/B}(M_j\otimes\mathbf C_{-\lambda})
\bigr),\\
H^i(T^*(G/P),\pi_P^*\mathcal L_{G/P}(A)^*)&\simeq\bigoplus_{j=0}^\infty H^i(G/P, \mathcal{L}_{G/P}(M_j\otimes A^*))
\end{align*}
Since $M_j$ is a $P$-module and $\lambda\in\mathbf{X}^+$, the tensor identity \cite[I.4.8]{Jan03} and Borel--Weil on
$P/B\simeq L/(L\cap B)$ give
\[
H^q\bigl(
P/B,
\mathcal L_{P/B}(M_j\otimes\mathbf C_{-\lambda})
\bigr)
\simeq
\begin{cases}
M_j\otimes A^*,&\text{if }q=0,\\
0,&\text{if }q>0.
\end{cases}
\]
Therefore, the spectral sequence for transitivity of induction
\cite[I.4.5(c)]{Jan03}, equivalently the Leray spectral sequence
for $q_0:G/B\rightarrow G/P$, degenerates and gives
\[
H^i\bigl(
G/B,
\mathcal L_{G/B}(M_j\otimes\mathbf C_{-\lambda})
\bigr)
\simeq
H^i\bigl(
G/P,
\mathcal L_{G/P}(M_j\otimes A^*)
\bigr).
\]
This identifies the degree $j$ components for every $j\geq0$, proving
the first isomorphism. Taking $j=0$ proves the second.
\end{proof}
We now state the main theorems.

\begin{thm}\label{vanishing of higher cohomology}
Let $G=SL_n$ and let \(P\supset B\) be a parabolic subgroup and let
\(\lambda\in\mathbf{X}^+\). Let $V_L(\lambda)$ denote the corresponding irreducible $L$-module of highest weight $\lambda$. Then, for all \(i\geq 1\),
\[
H^i\bigl(
T^*(G/P),
\mathcal L_{T^*(G/P)}( V_L(\lambda)^*)
\bigr)
=0.
\]
\end{thm}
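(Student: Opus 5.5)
By Lemma \ref{identify with T(G/P)}, it suffices to prove the vanishing of $H^i\bigl(G\times^B\mathfrak u_P,\pi_B^*\mathcal L_{G/B}(\lambda)^*\bigr)$ for $i\geq1$. We work on the $B$-level variety throughout. The plan follows the strategy announced at the start of the section.

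First, we realize $G\times^B\mathfrak u_P$ as an open subset of the compactification $Y_P$, described by lattice flags. We will show that $Y_P$ is a tower of projective bundles whose fiber ranks are governed by the block sizes $s_1,\dots,s_k$, and that the complement of $G\times^B\mathfrak u_P$ is the support of an effective divisor $D$. Second, we construct the generalized Bott--Samelson variety $Z_P$ for the affine Kac--Moody group $\widehat{SL}_n$, together with a proper birational map $\rho:Z_P\to Y_P$ that is an isomorphism over $G\times^B\mathfrak u_P$. On $Z_P$ we want a line bundle $\mathcal M$ whose restriction to the open part is $\pi_B^*\mathcal L_{G/B}(\lambda)^*$. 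We also want $\rho^*$ of the boundary divisor to be an effective divisor $E$ such that $\mathcal M(mE)$ has nonnegative markings for all $m\geq0$. This is where the condition $\lambda\in\mathbf X^+$ enters, via Lemmas \ref{restriction of M_P to open} and \ref{pullback along rho}.

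Third, localization: since the open part is the complement of $\operatorname{supp}E$, we have
\[
H^i\bigl(G\times^B\mathfrak u_P,\pi_B^*\mathcal L_{G/B}(\lambda)^*\bigr)\simeq\varinjlim_m H^i\bigl(Z_P,\mathcal M(mE)\bigr).
\]
This requires the open part to be the complement of an effective Cartier divisor in a (quasi-)projective variety, and it requires $\rho$ to be an isomorphism there, or at least $R\rho_*\mathcal O_{Z_P}=\mathcal O_{Y_P}$ so that cohomology can be compared on $Y_P$. Finally, Theorem \ref{Bott-Borel for Bott-Samelson} gives the vanishing of higher cohomology for line bundles with nonnegative markings on Bott--Samelson varieties. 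Hence every term of the direct limit vanishes in degrees $i\geq1$, and so does the limit.

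The main obstacle is the second step: the combinatorial bookkeeping that identifies the line bundle $\mathcal M$ and the boundary divisor in terms of markings on the affine Bott--Samelson word, and checks that all markings of $\mathcal M(mE)$ are nonnegative. If a direct computation of markings is too messy, I would first treat $P=B$, where $\mathfrak u_P=\mathfrak u$ and the word is the standard one. I would then reduce general $P$ by the comparison along $G\times^B\mathfrak u_P\hookrightarrow G\times^B\mathfrak u$, using that $\mathfrak u_P\subset\mathfrak u$ is $B$-stable and cut out by linear equations, or else by noting that the tower structure of $Y_P$ inserts only blocks of simple reflections from $I$, on which the markings are visibly nonnegative.
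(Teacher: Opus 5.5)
Your main line is the paper's proof. You reduce by Lemma \ref{identify with T(G/P)} to $G\times^B\mathfrak u_P\simeq D(\sigma_P)\subset Y_P$ and use $\rho^*\mathcal M_P(\lambda,r)\simeq\mathcal N_P(\lambda,r)$, where the marking is $a_j$ in position $k_j$ and $r$ in position $k_n$, so twisting by the boundary divisor $V(\sigma_P)$ only raises the last marking. You then write the cohomology as $\varinjlim_r H^i(Y_P,\mathcal M_P(\lambda,r))$ along multiplication by $\sigma_P$ and conclude with Theorem \ref{Bott-Borel for Bott-Samelson}.

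One correction: $\rho$ is not an isomorphism over $G\times^B\mathfrak u_P$ in general. For $n=3$, $P=B$, the first stage $Z(2,1)\to\mathbf P(V)$ is the blow-up at $[t^{-1}e_1]$, so $\rho$ has a positive-dimensional fiber over the point $\Lambda^0_\bullet\in D(\sigma_B)$ given by the standard flag with $X=0$. You therefore need your alternative $R\rho_*\mathcal O_{Z_P}\simeq\mathcal O_{Y_P}$, which holds because $Y_P$ is smooth; this is what the paper uses. Your $P=B$ fallback is unnecessary, and restriction along $G\times^B\mathfrak u_P\hookrightarrow G\times^B\mathfrak u$ would not by itself transfer higher-cohomology vanishing.
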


\begin{thm}\label{generation of zeroeth cohomology}
Let $G=SL_n$ and let \(P\supset B\) be a parabolic subgroup and let
\(\lambda\in\mathbf{X}^+\). Let $V_L(\lambda)$ denote the corresponding irreducible $L$-module of highest weight $\lambda$. Let $\mu:T^*(G/P)\rightarrow\mathfrak g^*$ be the moment map. Then multiplication defines a map
\begin{equation}\label{surjection}
\begin{aligned}
\phi_\lambda:
\mathbf C[\mathfrak g^*]\otimes
H^0(G/P,\mathcal L_{G/P}(V_L(\lambda)^*))
&\longrightarrow
H^0(T^*(G/P),\mathcal L_{T^*(G/P)}(V_L(\lambda)^*)),\\
f\otimes s&\longmapsto\mu^*(f)\pi_P^*(s).
\end{aligned}
\end{equation}
This map is a surjective homomorphism of graded $G$-equivariant
$\mathbf C[\mathfrak g^*]$-modules.
\end{thm}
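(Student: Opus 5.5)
Our plan is to prove the statement by passing to $G\times^B\mathfrak u_P$ via Lemma \ref{identify with T(G/P)}, and then to a compactification on which the Borel--Weil theorem for Bott--Samelson varieties applies. That lemma identifies $H^0(T^*(G/P),\mathcal L_{T^*(G/P)}(V_L(\lambda)^*))$ with $H^0(G\times^B\mathfrak u_P,\pi_B^*\mathcal L_{G/B}(\lambda)^*)$. It also identifies $H^0(G/P,\mathcal L_{G/P}(V_L(\lambda)^*))$ with $H^0(G/B,\mathcal L_{G/B}(\lambda)^*)\simeq V_G(\lambda)^*$. These identifications are compatible with multiplication by $\mu^*\mathbf C[\mathfrak g^*]$, because the moment map on $G\times^B\mathfrak u_P$ factors through $T^*(G/P)$. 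That $\phi_\lambda$ is a graded, $G$-equivariant $\mathbf C[\mathfrak g^*]$-module map is formal: $\mu$ is $G$-equivariant and $\mathbf G_m$-equivariant for the fiber dilation, and $\pi_P^*(s)$ has degree zero. So only surjectivity needs proof. Moreover, the degree $j$ piece of the target is $H^0(G/B,\mathcal L_{G/B}(S^j\mathfrak u_P^*\otimes\mathbf C_{-\lambda}))$, and surjectivity says this piece is spanned by products of degree $j$ polynomials on $\mathfrak g^*$ with sections in $V_G(\lambda)^*$.

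Next, we compactify using the lattice-flag variety $Y_P\supset G\times^B\mathfrak u_P$ and resolve it by the affine Bott--Samelson variety $Z_P\to Y_P$, which is proper and birational. We choose a line bundle $\mathcal M$ on $Z_P$ with nonnegative markings whose restriction to the open part is $\pi_B^*\mathcal L_{G/B}(\lambda)^*$, as in Lemmas \ref{restriction of M_P to open} and \ref{pullback along rho}. We also choose a line bundle $\mathcal D$ whose section $\sigma$ vanishes exactly on the boundary divisor. Localization then gives
\[
H^0(G\times^B\mathfrak u_P,\pi_B^*\mathcal L_{G/B}(\lambda)^*)\simeq\varinjlim_m H^0(Z_P,\mathcal M\otimes\mathcal D^{m}),
\]
with transition maps given by multiplication by $\sigma$; this is the comparison (\ref{comparison of cohomology}). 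Each $\mathcal M\otimes\mathcal D^m$ again has nonnegative markings. Theorem \ref{Bott-Borel for Bott-Samelson} therefore describes the global sections of each term as a Demazure-type (generalized Joseph) module for the affine Kac--Moody group. In particular, each such space is generated, under the action of the relevant parabolic subgroups, by products of sections of the line bundles attached to the individual factors.

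The key step is to show that every $H^0(Z_P,\mathcal M\otimes\mathcal D^m)$ lies in the $\mathbf C[\mathfrak g^*]$-span of $V_G(\lambda)^*$. We plan to use the surjectivity of multiplication maps from Theorem \ref{Bott-Borel for Bott-Samelson} (cf.\ \cite{LakshmibaiLittelmannMagyar2002}), namely
\[
H^0(Z_P,\mathcal M)\otimes H^0(Z_P,\mathcal D^m)\twoheadrightarrow H^0(Z_P,\mathcal M\otimes\mathcal D^m).
\]
The proof then reduces to two checks. First, the sections of $\mathcal M$ restrict to $\pi_B^*V_G(\lambda)^*$ in degree zero. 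Second, the sections of $\mathcal D^m$, which are precisely the case $\lambda=0$, restrict to functions generated by $\mu^*\mathbf C[\mathfrak g^*]$.

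For the second check, the $\lambda=0$ case amounts to $\mathbf C[G\times^B\mathfrak u_P]=\mathbf C[T^*(G/P)]$ being generated by $\mu^*\mathbf C[\mathfrak g^*]$. This holds in type $A$ because the moment map is birational onto $\overline{\mathcal O}_P$, which is normal (Kraft--Procesi), and $\mathbf C[\overline{\mathcal O}_P]$ is a quotient of $\mathbf C[\mathfrak g^*]$. Alternatively, one can argue directly on $Z_P$: the affine-Grassmannian interpretation of the boundary section identifies $H^0(Z_P,\mathcal D^m)$ with a piece of level-$m$ Demazure modules, and these are generated by matrix coefficients of $\mathfrak g^*$. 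We expect the main obstacle to be making the split of the markings into $\mathcal M$ and $\mathcal D^m$ compatible with the restriction to the open cell, and verifying that the multiplication surjectivity holds for exactly this decomposition. We would first try to handle the case $P=B$, as in the comparison with $P=B$ mentioned in the introduction, and then transport the result via Lemma \ref{identify with T(G/P)}.
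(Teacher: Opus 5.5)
There is a genuine gap in your check (a). Take the line bundle you indicate, $\mathcal M=\rho^*\mathcal M_P(\lambda,0)$ from Lemmas \ref{restriction of M_P to open} and \ref{pullback along rho}. Its sections are \emph{not} concentrated in fiber degree zero.

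Take $n=3$, $P=B$, $\lambda=\varpi_2$, so $\mathcal M_B(\varpi_2,0)=\det(\mathcal S_2)^\vee$. Every $\psi\in(\wedge^2Q)^*$ gives a global section. On $D(\sigma_B)$ we have $S_2=l_X(F_2)$ and $l_X(v)=t^{-1}v+t^{-2}Xv$ for $v\in F_2$. So the section becomes $v_1\wedge v_2\mapsto\psi(l_Xv_1\wedge l_Xv_2)$, a section of $\pi_B^*\mathcal L_{G/B}(\varpi_2)^*$. If $\psi$ only sees the component $t^{-1}V\otimes t^{-2}V$, this is $\psi(v_1\otimes Xv_2-v_2\otimes Xv_1)$. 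That is linear in $X$ and nonzero, e.g.\ at $F_2=\langle e_1,e_2\rangle$, $X=E_{12}$. Consistently, $H^0(Z_B,\mathcal N_B(\varpi_2,0))$ is dual to $\mathcal U_2\mathcal U_1\mathcal U_0\mathcal U_2v_{\hat{\varpi}_2}$, which has dimension at least $6>3=\dim\,V_G(\varpi_2)$.

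Two of your other steps are fine. The multiplication surjectivity follows from Theorem \ref{Bott-Borel for Bott-Samelson} and the Cartan embedding of generalized Demazure modules. Check (b) holds by birationality of the type-$A$ Springer map and Kraft--Procesi normality. But together they only reduce the theorem to showing that $H^0(Y_P,\mathcal M_P(\lambda,0))$ restricts into $\mu^*\mathbf C[\mathfrak g^*]\cdot\pi_B^*V_G(\lambda)^*$. That is a genuine case of the theorem, not a degree-zero triviality, and nothing in your proposal proves it. Pl\"ucker-type sections $\psi\circ\wedge^j l_X$ do visibly lie in $\mathbf C[\mathfrak g]\otimes(\wedge^jV)^*$, since $l_X$ is polynomial in $X$. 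However, you would additionally need that such sections span $H^0(Y_P,\mathcal L_{P,j})$, which is an extra input you have not supplied.

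Your fallback is to prove $P=B$ first and then transport ``via Lemma \ref{identify with T(G/P)}''. This misses the key step. That lemma compares $T^*(G/P)$ with $G\times^B\mathfrak u_P$, not $G\times^B\mathfrak u$ with $G\times^B\mathfrak u_P$. What is needed is surjectivity of the restriction $H^0(G\times^B\mathfrak u,\pi^*\mathcal L_{G/B}(\lambda)^*)\to H^0(G\times^B\mathfrak u_P,\pi_B^*\mathcal L_{G/B}(\lambda)^*)$, and this is exactly what the paper proves.

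In Proposition \ref{restriction is surjective on Y_P}, $\mathbf i_P$ is a subword of $\mathbf i_B$ whose marking differs only by zeros. So $Z_P\hookrightarrow Z_B$ corresponds to the inclusion $V_{\mathbf i_P,\mathbf d_P}\hookrightarrow V_{\mathbf i_B,\mathbf d_B}$. Hence $H^0(Y_B,\mathcal M_B(\lambda,r))\to H^0(Y_P,\mathcal M_P(\lambda,r))$ is surjective for every $r$. Since $\iota^*\sigma_B=\sigma_P$, these maps commute with the transition maps, and the surjection survives the direct limit. For $P=B$ the paper then simply invokes \cite[Proposition 2.6]{Broer1993}. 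It therefore never needs to relate the filtration by $r$ to the fiber-degree grading, which is precisely where your argument goes wrong.
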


\subsection{Lattice compactification}
Let $P$ be the standard parabolic of $SL_n$ corresponding to the partition $0=d_0<d_1<\dots<d_k=n$. So there are $k$ blocks of the Levi and their block sizes are $d_i-d_{i-1}, 1\leq i\leq k$. Let 
$$b_P(0):=0,\;\;\;b_P(j):=d_{a-1}\quad\text{if }d_{a-1}<j\leq d_a,$$
be the function which extracts the left endpoint of the block. Let $V=\mathbf{C}^n$ and denote the standard basis by $\{e_1,\dots,e_n\}$. Write the standard flag $F_j^0:=\text{span}\{e_1,\dots,e_j\}$.
We may identify the parabolic nilradical as:
\begin{equation*}
    \mathfrak{u}_{P}=\{X\in\End(\mathbf{C}^n): X(F_j^0)\subset F_{b_P(j)}^0\}
\end{equation*}
\noindent Let $\text{Fl}(V)\simeq G/B$ denote the space of full flags $F_\bullet:=(0=F_0\subset F_1\subset\cdots\subset F_n=V)$ on $V$. Then we may identify 
\begin{equation}\label{flag description of cotangent bundle}
    G\times^B\mathfrak{u}_{P}=\{(F_\bullet,X)\in\text{Fl}(V)\times\End(\mathbf{C}^n): X(F_j)\subset F_{b_P(j)}\}.
\end{equation}
Let us now construct a smooth projective compactification of this space.

\begin{defn}\label{Y_P definition}
   Let $E=V\otimes \mathbf{C}[[t]]$. Define the variety
    \begin{equation}
        Y_P:=\{E=\Lambda_0\subset\Lambda_1\subset\dots\subset\Lambda_n\subset t^{-n}E:\dim(\Lambda_i/\Lambda_{i-1})=1, t\Lambda_i\subset\Lambda_{b_P(i)}\;\text{ for all } 1\leq i\leq n\}
    \end{equation}
We denote points of $Y_P$ by $\Lambda_\bullet=(\Lambda_0\subset\dots\subset\Lambda_n)$ and we call each $\mathbf{C}[[t]]$-module, $\Lambda_i$, a lattice.
\end{defn}
\noindent This variety has appeared in similar forms in \cite{Kamnitzer2024,Lusztig1981}. The following lemma also appears in \cite[Prop 2.1]{Kamnitzer2024}.
\begin{lem}\label{compactification lemma}
The variety $Y_P$ is a tower of smooth projective bundles. More precisely, if
$Y_P^{(j)}$ denotes the variety of truncated lattice flags
\begin{equation}\label{truncated Y_P}
Y_P^{(j)}:=\{E=\Lambda_0\subset\cdots\subset\Lambda_j:\dim(\Lambda_i/\Lambda_{i-1})=1,\;t\Lambda_i\subset\Lambda_{b_P(i)}\text{ for all }1\leq i\leq j\},
\end{equation}
with $E=V\otimes \mathbf{C}[[t]]$, then the map forgetting the last component,
\[
\pi_j:Y_P^{(j)}\longrightarrow Y_P^{(j-1)}\;\text{is a $\mathbf P^{\,n-j+b_P(j)}$-bundle.}
\]
In particular, $Y_P$ is smooth and
projective. Lastly, there is a closed embedding $\iota:Y_P\hookrightarrow Y_B.$
\end{lem}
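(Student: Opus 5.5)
Fix a point $(\Lambda_0\subset\cdots\subset\Lambda_{j-1})\in Y_P^{(j-1)}$ and describe the fiber of $\pi_j$ over it as the projectivization of an explicit vector space whose dimension is constant. A choice of $\Lambda_j$ with $\Lambda_{j-1}\subset\Lambda_j$ and $\dim\Lambda_j/\Lambda_{j-1}=1$ is the same as a line $\ell=\Lambda_j/\Lambda_{j-1}$ in $t^{-n}E/\Lambda_{j-1}$. The requirement that $\Lambda_j$ be a $\mathbf C[[t]]$-submodule, together with $t\Lambda_j\subset\Lambda_{b_P(j)}$, says $t\ell\subset\Lambda_{b_P(j)}/\Lambda_{j-1}$ inside $t^{-n}E/\Lambda_{j-1}$. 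Here $b_P(j)\le j-1$, so $\Lambda_{b_P(j)}\subset\Lambda_{j-1}$ and the condition reads $t\Lambda_j\subset\Lambda_{b_P(j)}$. This in turn gives $t\Lambda_j\subset\Lambda_{j-1}$ and the submodule property. Hence the fiber is $\mathbf P(K_j)$ with
\[
K_j:=\{v\in t^{-1}\Lambda_{b_P(j)}/\Lambda_{j-1}\},
\]
that is, the space of classes $v$ with $tv\in\Lambda_{b_P(j)}$. Since $\Lambda_{b_P(j)}\subset t^{-1}\Lambda_{b_P(j)}\cap\Lambda_{j-1}$, the space $K_j=t^{-1}\Lambda_{b_P(j)}/\Lambda_{j-1}$ is well defined. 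Note also that $t^{-1}\Lambda_{b_P(j)}\subset t^{-n}E$, because inductively $\Lambda_i\subset t^{-i}E$ (as $t\Lambda_i\subset\Lambda_{i-1}$ when $b_P(i)\le i-1$). This inductive inclusion is why the ambient space $t^{-n}E$ never constrains anything.

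Next compute $\dim K_j$. Multiplication by $t^{-1}$ is an isomorphism, so $\dim t^{-1}\Lambda_{b_P(j)}/\Lambda_{b_P(j)}=\dim E/tE=n$ for any lattice. Then
\[
\dim K_j=\dim t^{-1}\Lambda_{b_P(j)}/\Lambda_{b_P(j)}-\dim\Lambda_{j-1}/\Lambda_{b_P(j)}=n-(j-1-b_P(j)).
\]
The fiber is therefore $\mathbf P^{\,n-j+b_P(j)}$. For $\pi_j$ to be a Zariski-locally trivial projective bundle, observe that $\Lambda_{b_P(j)}$ and $\Lambda_{j-1}$ are tautological subbundles of the trivial bundle $t^{-n}E/t^{n}E$ (all lattices lie between $E\supset t^nE$ and $t^{-n}E$, a finite-dimensional ambient space). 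So $\mathcal K_j=t^{-1}\Lambda_{b_P(j)}/\Lambda_{j-1}$ is a quotient of vector bundles of constant rank on $Y_P^{(j-1)}$, and $Y_P^{(j)}\cong\mathbf P(\mathcal K_j)$. The one point needing care is that $t^{-1}\Lambda_{b_P(j)}$ is indeed a subbundle, i.e. that $t^{-1}$ applied to a rank-constant family of lattices containing $t^{n}E$ gives a subbundle. This holds because $t^{-1}$ is an isomorphism $t^{-n+1}E/t^{n}E\to t^{-n}E/t^{n-1}E$, and one can work in $t^{-n}E/t^{n+1}E$ instead to make it precise. I expect this bookkeeping to be the main (mild) obstacle.

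Induction on $j$, starting from the point $Y_P^{(0)}$, then shows that $Y_P=Y_P^{(n)}$ is an iterated projective bundle, hence smooth and projective. For the embedding $\iota$, note that $b_B(j)=j-1$ and $b_P(j)\le j-1$. Since the lattices are increasing, $t\Lambda_i\subset\Lambda_{b_P(i)}$ implies $t\Lambda_i\subset\Lambda_{i-1}$, so $Y_P$ is the subset of $Y_B$ cut out by the conditions $t\Lambda_i\subset\Lambda_{b_P(i)}$. These are closed conditions, namely the vanishing of the composite bundle map $\Lambda_i\xrightarrow{t}t^{-n}E/t^{n}E\to(t^{-n}E/t^nE)/\Lambda_{b_P(i)}$, so $\iota$ is a closed embedding, compatible with the scheme structures since both are defined as the same kind of zero loci in a product of Grassmannians.
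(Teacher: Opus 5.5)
Your strategy is the paper's: identify the fiber of $\pi_j$ with $\mathbf{P}(t^{-1}\Lambda_{b_P(j)}/\Lambda_{j-1})$, count dimensions, and cut $Y_P$ out of $Y_B$ by closed incidence conditions. There is, however, a real gap at the one step where the structure of $P$ enters. You never prove $\Lambda_{j-1}\subset t^{-1}\Lambda_{b_P(j)}$, i.e.\ $t\Lambda_{j-1}\subset\Lambda_{b_P(j)}$. The reason you give for well-definedness of $K_j$, namely $\Lambda_{b_P(j)}\subset t^{-1}\Lambda_{b_P(j)}\cap\Lambda_{j-1}$, does not imply it. Yet you use this containment three times:
\begin{itemize}
\item to make $t^{-1}\Lambda_{b_P(j)}/\Lambda_{j-1}$ a quotient at all;
\item to reduce the condition $t\Lambda_j\subset\Lambda_{b_P(j)}$, for $\Lambda_j=\Lambda_{j-1}+\mathbf{C}v$, to the condition $tv\in\Lambda_{b_P(j)}$ on the new vector alone, since one also needs $t\Lambda_{j-1}\subset\Lambda_{b_P(j)}$;
\item in your dimension count $\dim t^{-1}\Lambda_{b_P(j)}/\Lambda_{b_P(j)}-\dim\Lambda_{j-1}/\Lambda_{b_P(j)}$, which presupposes $\Lambda_{j-1}\subset t^{-1}\Lambda_{b_P(j)}$.
\end{itemize}
Relatedly, ``$t\ell\subset\Lambda_{b_P(j)}/\Lambda_{j-1}$'' is not meaningful as written, since $\Lambda_{b_P(j)}\subset\Lambda_{j-1}$. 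Read literally, it only gives the weaker $Y_B$-condition $tv\in\Lambda_{j-1}$.

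This containment does not follow from $b_P(i)\le i-1$ alone, which is the only property of $b_P$ you invoke. Take $n=3$ and the non-monotone function $b(1)=0$, $b(2)=1$, $b(3)=0$. Over the point $\Lambda_1=E+\mathbf{C}t^{-1}e_1$, $\Lambda_2=\Lambda_1+\mathbf{C}t^{-2}e_1$ the fiber is empty, because any $\Lambda_3\supset\Lambda_2$ has $t^{-1}e_1\in t\Lambda_3$, so $t\Lambda_3\not\subset E$. Over $\Lambda_2=\Lambda_1+\mathbf{C}t^{-1}e_2$ the fiber is a point. The missing input is that $b_P$ is nondecreasing, so $t\Lambda_{j-1}\subset\Lambda_{b_P(j-1)}\subset\Lambda_{b_P(j)}$; this is exactly the first line of the paper's proof.

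With that inserted, the rest of your argument goes through, including the closed-embedding part. Your extra care about local triviality, which the paper leaves implicit, is also fine. It is cleanest in $Q=t^{-n}E/E$: there $t^{-1}\Lambda_b/E$ is the preimage of $S_b$ under the nilpotent map $t$. It has constant rank $n+b$ because $S_b\subset tQ$, so it is a subbundle.
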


\begin{proof}
Fix $\Lambda_0,\ldots,\Lambda_{j-1}$. Since
$b_P(j-1)\leq b_P(j)$, we have $t\Lambda_{j-1}\subset \Lambda_{b_P(j-1)}
\subset \Lambda_{b_P(j)}.$
Thus a permissible $\Lambda_j$ is equivalent to a line $\Lambda_j/\Lambda_{j-1} \subset t^{-1}\Lambda_{b_P(j)}/\Lambda_{j-1}.$
The dimension of the vector space $t^{-1}\Lambda_{b_P(j)}/\Lambda_{j-1}$ is $(n+b_P(j))-(j-1)=n-j+b_P(j)+1.$ Therefore the forgetful map is a
$\mathbf P^{\,n-j+b_P(j)}$-bundle. Finally, since $b_P(i)\leq i-1=b_B(i)$, every defining condition for $Y_P$
implies the corresponding condition for $Y_B$. These are closed
incidence conditions, so $Y_P\hookrightarrow Y_B$ is a closed immersion.
\end{proof}

In particular, the compactification $Y_P$ introduced here coincides with the one constructed by S. Kato in the case the root ideal comes from a parabolic subalgebra. See \cite[Proposition 3.7]{Kato2025}.

Set
\[
Q:=t^{-n}E/E\simeq\bigoplus_{m=1}^n t^{-m}V.
\]
Multiplication by $t$ induces a nilpotent endomorphism of $Q$, still
denoted by $t$. For a lattice flag in $Y_P$, set
$S_i:=\Lambda_i/E$. This identifies
\[
Y_P\simeq
\{0=S_0\subset\cdots\subset S_n\subset Q:
\dim S_i=i,\ tS_i\subseteq S_{b_P(i)}\}.
\]

Let $\mathcal{S}_j$ be the universal vector bundle on $Y_P$ whose fiber over a flag $S_\bullet\in Y_P$ is the space $S_j$. We have a standard projection to the $t^{-1}$-component 
\[
\operatorname{pr}_{-1}:Q\longrightarrow V,\qquad
\operatorname{pr}_{-1}\left(\sum_{m=1}^n t^{-m}v_m\right)=v_1.
\]

If we restrict this to $S_n$, we obtain a map of universal vector bundles of rank $n$ over $Y_P$:
\begin{equation}
    \theta_P=\text{pr}_{-1}\vert_{\mathcal{S}_n}:\mathcal{S}_n\rightarrow V\otimes\mathcal{O}_{Y_P}
\end{equation}
We denote the fiber of this map over a particular point $S_\bullet$ of $Y_P$ also by $\theta_P:S_n\rightarrow V$. 
Next, define
\begin{equation}
\mathcal A_P:=\det(\mathcal{S}_n)^\vee\otimes\det(V),
\qquad
\sigma_{P}:=\det(\theta_P)\in H^0(Y_P,\mathcal A_P).
\end{equation}
\noindent Let $D(\sigma_P)$ denote the non-vanishing locus inside $Y_P$ of the section $\sigma_P$.

\begin{lem}\label{principal open D(sigma_P)} We have
    \[
D(\sigma_P)\simeq G\times^B\mathfrak u_P.
\]
Moreover, under the closed immersion $\iota:Y_P\hookrightarrow Y_B$, we have  $\iota^*\mathcal A_B\simeq\mathcal A_P, \iota^*\sigma_B=\sigma_P.$
\end{lem}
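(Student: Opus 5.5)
We construct mutually inverse maps between $D(\sigma_P)$ and the flag description \eqref{flag description of cotangent bundle} of $G\times^B\mathfrak u_P$. Everything reduces to the observation that $\sigma_P$ is nonvanishing at $S_\bullet$ exactly when $\theta_P:S_n\to V$ is an isomorphism. We then read off a full flag of $V$ and a nilpotent endomorphism $X$ from the lattice flag, and check that the conditions $tS_i\subseteq S_{b_P(i)}$ become the conditions $X(F_j)\subset F_{b_P(j)}$.

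\emph{The map $D(\sigma_P)\to G\times^B\mathfrak u_P$.} Let $S_\bullet\in D(\sigma_P)$, so $\theta_P:S_n\xrightarrow{\sim}V$. Set $F_j:=\theta_P(S_j)$, which is a full flag in $V$. Since $tS_n\subseteq S_{b_P(n)}\subseteq S_n$, multiplication by $t$ preserves $S_n$. Define $X:=\theta_P\circ t\circ\theta_P^{-1}\in\End(V)$. Then
\[
X(F_j)=\theta_P(tS_j)\subseteq\theta_P(S_{b_P(j)})=F_{b_P(j)},
\]
so $(F_\bullet,X)$ is a point of \eqref{flag description of cotangent bundle}. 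Both constructions are algebraic on the open locus where $\theta_P$ is invertible, so this gives a morphism $D(\sigma_P)\to G\times^B\mathfrak u_P$.

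\emph{The inverse map.} Let $(F_\bullet,X)$ be a point of \eqref{flag description of cotangent bundle}. Then $X$ is nilpotent with $X^n=0$, since $b_P(j)<j$ forces $X$ to strictly lower the flag index. Define the injection
\[
s:V\to Q,\qquad s(v):=\sum_{m\geq1}t^{-m}X^{m-1}v.
\]
The sum is finite by nilpotence. Put $S_j:=s(F_j)$, i.e. $\Lambda_j=E+s(F_j)$. Then $\mathrm{pr}_{-1}\circ s=\mathrm{id}$, and in $Q$ we have $t\,s(v)=s(Xv)$, because the $t^0$-term lies in $E$ and vanishes in $Q$. Hence $tS_j=s(XF_j)\subseteq S_{b_P(j)}$, we have $\dim S_j=j$, and $\theta_P$ is invertible on $S_n$. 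This gives a morphism back into $D(\sigma_P)$.

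\emph{Checking the two maps are inverse.} Going from lattices to pairs and back, the image of $s$ is the unique $t$-stable lift of $V$ with the given $t^{-1}$-component and the given action. More precisely, any $t$-stable $S_n$ with $\theta_P$ an isomorphism satisfies $S_n=s(V)$. Indeed, for $x\in S_n$ the $t^{-m}$-component of $x$ equals $\mathrm{pr}_{-1}(t^{m-1}x)=X^{m-1}\theta_P(x)$. This componentwise determination is the main point and the only real obstacle. The composite in the other order is clearly the identity. Both maps are $G$-equivariant; we can also conclude by noting that we have a bijective morphism between smooth varieties that is an isomorphism, using the explicit inverse.

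\emph{Compatibility with $\iota$.} The closed immersion $\iota:Y_P\hookrightarrow Y_B$ sends $S_\bullet$ to the same flag in $Q$. The universal bundles satisfy $\iota^*\mathcal S_n^{B}=\mathcal S_n^{P}$, and $\mathrm{pr}_{-1}$ is defined on $Q$ independently of $P$. Hence $\iota^*\theta_B=\theta_P$. Taking determinants gives $\iota^*\mathcal A_B\simeq\mathcal A_P$ and $\iota^*\sigma_B=\sigma_P$.
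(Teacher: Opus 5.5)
Your proposal is correct and follows essentially the same route as the paper: the map $S_\bullet\mapsto(\theta_P(S_\bullet),\theta_P\circ t\circ\theta_P^{-1})$, the inverse via $v\mapsto\sum_{m}t^{-m}X^{m-1}v$, the componentwise identity $v_m=\theta_P(t^{m-1}x)=X^{m-1}\theta_P(x)$ showing the maps are mutually inverse, and the compatibility with $\iota$ obtained by restricting $\mathcal S_n$ and $\theta_B$ and taking determinants. Your closing remark about bijective morphisms between smooth varieties is unnecessary, since you already construct an explicit inverse morphism.
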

\begin{proof}
Let $S_\bullet\in D(\sigma_P).$ Then $\theta_P:S_n\rightarrow V$ is an isomorphism. Define $F_j:=\theta_P(S_j), X:=\theta_P\circ t\circ\theta_P^{-1}.$ Then
$$X(F_j) = \theta_P(tS_j)\subset\theta_P(S_{b_P(j)})=F_{b_P(j)}.$$
This shows $(F_\bullet,X)\in \{(F_\bullet,X)\in\text{Fl}(V)\times\End(\mathbf{C}^n): X(F_j)\subset F_{b_P(j)}\}$, which identifies with $G\times^B\mathfrak{u}_P$ by \ref{flag description of cotangent bundle}. 

Conversely, let $(F_\bullet,X)\in G\times^B\mathfrak u_P$. Since
$b_P(j)\leq j-1$, we have $X(F_j)\subset F_{j-1}$ and hence $X^n=0$.
Define
$$l_X(v):=\sum_{m=1}^nt^{-m}X^{m-1}v\in Q.$$
Then $\operatorname{pr}_{-1}\circ l_X=\operatorname{id}_V$ and, since
$X^n=0$,
\[
t l_X(v)=\sum_{m=1}^{n-1}t^{-m}X^mv=l_X(Xv).
\]
Define $S_j:=l_X(F_j)$. Since $\operatorname{pr}_{-1}\circ l_X=\operatorname{id}_V$, the map $\theta_P=\operatorname{pr}_{-1}\vert_{S_n}:S_n\rightarrow V$ is an isomorphism with inverse $l_X$. Furthermore,
$$tS_j=l_X(XF_j)\subset l_X(F_{b_P(j)})=S_{b_P(j)}\Rightarrow S_\bullet\in Y_P.$$
Since $\theta_P$ is an isomorphism, $S_\bullet$ in fact lands in $D(\sigma_P)$. Moreover, these morphisms are algebraic since $\theta_P^{-1}$ is regular on
$D(\sigma_P)$ and $l_X$ is polynomial in $X$. And they are inverse: if
$s=\sum_{m=1}^nt^{-m}v_m\in S_n$, then $v_m=\theta_P(t^{m-1}s)=X^{m-1}\theta_P(s)$,
and hence $s=l_X(\theta_P(s))$. The other inverse is checked similarly.

Finally, the universal rank-$n$ bundle and the map $\theta_B$ restrict
along $\iota$ to $\mathcal S_n$ and $\theta_P$, respectively. Taking
determinants gives $\iota^*\mathcal A_B\simeq\mathcal A_P,\text{ and }\iota^*\sigma_B=\sigma_P.$
\end{proof}

 Next, suppose $\lambda=\sum_{j=1}^{n-1}a_j\varpi_j\in\mathbf{X}^+$, so $a_j\geq 0$, and suppose $r\geq 0$. Define the following line bundles on $Y_P$:
\begin{align}
     \mathcal{L}_{P,j}&:=\det(\mathcal{S}_j)^\vee,\\
    \mathcal{M}_{P}(\lambda,r)&:=\bigotimes_{j=1}^{n-1}\mathcal{L}_{P,j}^{\otimes a_j}\otimes\mathcal{A}_P^{\otimes r}
\end{align}

\begin{lemma}\label{restriction of M_P to open}Consider the natural projection
$\pi:D(\sigma_P)\rightarrow G/B$. Then 
   $\mathcal M_P(\lambda,r)\vert_{D(\sigma_P)}
\simeq
\pi^*\mathcal L_{G/B}(\lambda)^*.$
\end{lemma}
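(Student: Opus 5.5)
The plan is to use the explicit isomorphism of Lemma \ref{principal open D(sigma_P)} to transport the universal bundles $\mathcal S_j$ on $D(\sigma_P)$ to tautological bundles on $G/B$, and then compare determinants.

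\textbf{Step 1: trivialize $\mathcal A_P$.} On $D(\sigma_P)$ the section $\sigma_P=\det(\theta_P)$ is nowhere vanishing. Hence $\mathcal A_P\vert_{D(\sigma_P)}\simeq\mathcal O_{D(\sigma_P)}$, trivialized by $\sigma_P$. This trivialization is $G$-equivariant, since $\theta_P=\mathrm{pr}_{-1}\vert_{\mathcal S_n}$ commutes with the $G$-action on $V$ and $\det V$ is the trivial $\SL_n$-module. So the factor $\mathcal A_P^{\otimes r}$ drops out, and it remains to show $\bigotimes_j\mathcal L_{P,j}^{\otimes a_j}\vert_{D(\sigma_P)}\simeq\pi^*\mathcal L_{G/B}(\lambda)^*$.

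\textbf{Step 2: identify each $\mathcal S_j$.} Let $\mathcal F_j$ be the tautological subbundle of $V\otimes\mathcal O_{G/B}$ with fiber $F_j$. The isomorphism in Lemma \ref{principal open D(sigma_P)} sends $S_\bullet$ to $F_j=\theta_P(S_j)$. Since $\theta_P$ is a fiberwise isomorphism on $D(\sigma_P)$, it restricts to a $G$-equivariant isomorphism of vector bundles
\[
\mathcal S_j\vert_{D(\sigma_P)}\xrightarrow{\ \sim\ }\pi^*\mathcal F_j,
\]
with inverse $l_X$. Taking determinants and duals gives $\mathcal L_{P,j}\vert_{D(\sigma_P)}\simeq\pi^*\det(\mathcal F_j)^\vee$.

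\textbf{Step 3: identify $\det(\mathcal F_j)^\vee$ on $G/B$.} We have $\mathcal F_j=G\times^B F_j^0$, and $B$ acts on $\det F_j^0=\wedge^j F^0_j$ through the character $\epsilon_1+\dots+\epsilon_j=\varpi_j$. Hence $\det(\mathcal F_j)=\mathcal L_{G/B}(\mathbf C_{\varpi_j})$ and $\det(\mathcal F_j)^\vee=\mathcal L_{G/B}(\varpi_j)^*$, using the paper's convention $\mathcal L(\lambda)^*=G\times^B\mathbf C_{-\lambda}$. Tensoring over $j$ with exponents $a_j$ gives $\mathcal L_{G/B}(\lambda)^*$, and pulling back along $\pi$ yields the claim.

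\textbf{Main obstacle.} The only real care needed is in the sign and normalization conventions, so that the weight of $\det F_j^0$ is $+\varpi_j$ rather than $-\varpi_j$ or a $w_0$-twist. This has to match the convention in Lemma \ref{identify with T(G/P)}, where $H^0(G/B,\mathcal L_{G/B}(\lambda)^*)=V_G(\lambda)^*$. As a consistency check, $\det(\mathcal F_j)^\vee$ has global sections $\wedge^jV^*$ by the Plücker description, which is $V_G(\varpi_j)^*$, consistent with the convention.
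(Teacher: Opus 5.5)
Your proposal is correct and follows the same route as the paper: trivialize $\mathcal A_P$ on $D(\sigma_P)$ via $\sigma_P$, identify $\mathcal S_j\vert_{D(\sigma_P)}\simeq\pi^*\mathcal F_j$ through $\theta_P$, and use $\det(\mathcal F_j)^\vee\simeq\mathcal L_{G/B}(\varpi_j)^*$. Your added checks on $G$-equivariance and on the sign convention ($\det F_j^0$ has weight $+\varpi_j$, so global sections are $\wedge^jV^*=V_G(\varpi_j)^*$) agree with the paper's conventions and only make explicit what the paper's short proof leaves implicit.
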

\begin{proof}
Let $\mathcal{F}_j$ denote the tautological rank $j$ vector bundle on $G/B$. So $\det(\mathcal{F}_j)^\vee\simeq\mathcal{L}(\varpi_j)^*$ and $\mathcal{S}_j\vert_{D(\sigma_P)}\simeq\pi^*(\mathcal{F}_j)$. This implies $\mathcal{L}_{P,j}\vert_{D(\sigma_P)}\simeq\pi^*(\mathcal{L}(\varpi_j)^*)$. Moreover, on $D(\sigma_P)$, multiplication by $\sigma_P$ produces a trivialization 
$\mathcal{O}_{D(\sigma_P)}\simeq\mathcal{A}_P\vert_{D(\sigma_P)}$. Thus we deduce $\mathcal{M}_{P}(\lambda,r)\vert_{D(\sigma_P)}
\simeq
\pi^*\mathcal{L}_{G/B}(\lambda)^*$ for all $r\geq0$.
\end{proof}

\subsection{Bott-Samelson Resolution}
Let us recall some notation for Bott--Samelson varieties associated to affine Kac--Moody groups \cite{Kumar2002}. Let $\hat{G}$ denote the Kac-Moody group obtained by a central extension:
$$1\rightarrow\mathbf{G}_m\rightarrow\hat{G}\rightarrow\mathrm{SL}_n(\mathbf{C}((t)))\rightarrow 1.$$
It is normalized so that the central $\mathbf{G}_m$ acts with weight $1$ on the determinant line. The cocharacter lattice of the central $\mathbf G_m$ is generated by $c=\alpha_0^\vee+\cdots+\alpha_{n-1}^\vee$. Moreover, let $\hat{\varpi}_i,0\leq i\leq n-1$, denote the affine fundamental weights \footnote{We deviate from the standard $\Lambda_i$ notation since that is used for lattices}. They satisfy the relation $\hat{\varpi}_i=\hat{\varpi}_0+\varpi_i$ for $1\leq i<n$ and are characterized by the property that 
$$\langle\hat{\varpi}_i,\alpha_j^\vee\rangle=\delta_{ij},\quad \text{for all }0\leq i,j<n.$$

For $0\leq r\leq n$, define the standard lattices
\begin{equation}
    \Lambda_r^0:=E+t^{-1}\operatorname{span}_{\mathbf{C}[[t]]}(e_1,\dots,e_r).
\end{equation}
Thus $\Lambda_0^0=E$ and $\Lambda_n^0=t^{-1}E$. We let $\hat{G}$ act on lattices through its projection to $SL_n(\mathbf{C}((t)))$, so the central $\mathbf{G}_m$ acts trivially. Define the standard affine Iwahori subgroup and the minimal parahoric subgroup by
\begin{align}\label{lattice B and P}
    \widehat B
    &:=
    \left\{
        g\in \hat{G}:
        g\Lambda_r^0=\Lambda_r^0
        \text{ for every }0\leq r<n
    \right\},\\
    \widehat P_i
    &:=
    \left\{
        g\in \hat{G}:
        g\Lambda_r^0=\Lambda_r^0
        \text{ for every }0\leq r<n,\ r\neq i
    \right\},
    \qquad 0\leq i<n.
\end{align}
Then $\widehat{P}_i/\widehat{B}
    \simeq
    \mathbf{P}(\Lambda_{i+1}^0/\Lambda_{i-1}^0)
    \simeq\mathbf{P}^1$ for $i\neq 0$ and $\widehat P_0/\widehat B
    \simeq
    \mathbf P\bigl(\Lambda_1^0/t\Lambda_{n-1}^0\bigr)  \simeq\mathbf P^1.$ We call $\hat{G}/\hat{B}$ the affine flag variety; it may be identified with complete lattice flags. Moreover, it has a projection to the affine Grassmannian by remembering $\Lambda_0$ and has fibers $G/B$ \cite[Sect. 2.1.7]{Yun2017}.

Following \cite[Sect. 7.1.3]{Kumar2002}, given a finite affine word
$\mathbf{i}:=(i_1,\dots,i_N)$, define the corresponding Bott--Samelson
variety by
\begin{equation}\label{Bott-Samelson}
    Z(\mathbf{i})
    :=
    \widehat{P}_{i_1}\times^{\widehat{B}}
    \cdots
    \times^{\widehat{B}}\widehat{P}_{i_N}/\widehat{B}.
\end{equation}

For each $j$, we now construct a reduced word $c_j(P)$ whose
Bott--Samelson variety maps birationally to the projective space
$\mathbf P(U_j)$ arising from the $j$-th stage of the tower defining
$Y_P$ (Lemma \ref{compactification lemma}). We then concatenate these words to obtain $\mathbf{i}_P:=(c_1(P),\dots,c_n(P))$ and subsequently define $Z_P:=Z(\mathbf{i}_P)$.

Fix $1\leq j\leq n$, and set $$m_j:=n-j+b_P(j).$$ Since $P$ is fixed, we suppress it from the new notation below. Since $0\leq b_P(j)<j$, the vector space
\begin{equation*}
    U_{j}:=t^{-1}\Lambda_{b_P(j)}^0/\Lambda_{j-1}^0
\end{equation*}
has dimension $m_j+1$. It has a basis
\begin{equation}
\begin{aligned}
    u_0:=t^{-1}e_j,\;\;
    u_1:=t^{-1}e_{j+1},\dots,\;\;
    u_{n-j}:=t^{-1}e_n,\;\;
    u_{n-j+1}:=t^{-2}e_1,\;\;\dots,\;\;
    u_{m_j}:=t^{-2}e_{b_P(j)}.
\end{aligned}
\end{equation}

Define the word
\begin{equation}\label{c_j(P)}
    c_j(P):=(\beta_{m_j},\dots,\beta_1)
    :=
    (b_P(j)-1,\dots,1,0,n-1,\dots,j),
\end{equation}
where either descending string is omitted when empty. With respect to the ordered basis $u_0,\dots,u_{m_j}$, the roots $\beta_1,\dots,\beta_{m_j}$ are the adjacent simple roots. Hence they form a proper connected subdiagram of type $A_{m_j}$, and the corresponding semisimple subgroup $G_j$ acts on $U_j$ as $G_j\simeq\mathrm{SL}(U_j)\simeq\mathrm{SL}_{m_j+1}.$

Set $B_j:=G_j\cap\widehat B$ and $\ell_j^0:=\Lambda_j^0/\Lambda_{j-1}^0=\mathbf C u_0.$
The stabilizer $R_j:=\operatorname{Stab}_{G_j}(\ell_j^0)$ is the standard maximal parabolic whose Levi has simple roots $\beta_2,\dots,\beta_{m_j}$. Hence
\[
    G_j/R_j\xrightarrow{\sim}\mathbf P(U_j),
    \qquad
    gR_j\longmapsto g\ell_j^0.
\]

The Weyl group of $G_j,$ and of the Levi of $R_j$, is respectively
\[
    \langle s_{\beta_1},\dots,s_{\beta_{m_j}}\rangle
    \simeq S_{m_j+1},
    \qquad
    \langle s_{\beta_2},\dots,s_{\beta_{m_j}}\rangle
    \simeq S_{m_j}.
\]
Thus \(w_j:=s_{\beta_{m_j}}\cdots s_{\beta_1}\)
is the longest minimal coset representative, and $c_j(P)$ is a reduced
expression for $w_j$. Therefore
\[
    C_j^0:=B_jw_jR_j/R_j
\]
is the open Bruhat cell of $G_j/R_j$.

The standard Bott--Samelson construction
\cite[2.2.1]{BrionKumar2005} gives a proper birational morphism
\begin{equation}\label{phi_{P,j}}
    \phi_j:Z(c_j(P))\longrightarrow G_j/R_j\simeq\mathbf P(U_j),
    \qquad
    [q_1,\dots,q_{m_j}]
    \longmapsto(q_1\cdots q_{m_j})\ell_j^0,
\end{equation}
which is an isomorphism over $C_j^0$.

Finally, we combine the construction over all $1\leq j\leq n$ by setting (Recall \ref{Bott-Samelson})
\begin{equation}\label{Bott-Samelson for P}
    Z_P:=Z(\mathbf i_P),\;\text{ where }\mathbf i_P:=(c_1(P),\dots,c_n(P))\text{ is the concatenation of words}.
\end{equation}

\begin{prop}\label{def of rho}
The Bott--Samelson variety $Z_P$ defined in
\ref{Bott-Samelson for P} admits a proper birational morphism $\rho: Z_P \longrightarrow Y_P$ to the lattice compactification $Y_P$ defined in
\ref{Y_P definition}.
\end{prop}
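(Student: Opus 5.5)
We define $\rho$ on a point $[q_1,\dots,q_N]\in Z_P$ by the standard "partial product" recipe. Writing $N_j:=m_1+\cdots+m_j$ for the length of the word $(c_1(P),\dots,c_j(P))$ and $g_j:=q_1\cdots q_{N_j}\in\widehat G$ (well defined up to right multiplication by $\widehat B$), set
\[
\rho([q_1,\dots,q_N]):=\bigl(E=\Lambda_0\subset g_1\Lambda_1^0\subset g_2\Lambda_2^0\subset\cdots\subset g_n\Lambda_n^0\bigr).
\]
This is well defined because $\widehat B$ stabilizes every $\Lambda^0_r$, and it is a morphism, being the composite of $Z_P\to\prod_j\widehat G/\widehat B$ with the projections to the lattice Grassmannians. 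Properness is then automatic, since $Z_P$ is projective and $Y_P$ is separated.

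The substantive check is that the image lies in $Y_P$. We argue by induction on $j$, proving simultaneously that
\[
g_{j-1}\Lambda_r^0=\Lambda_r\quad\text{for all } r\le j-1.
\]
The base case uses that the letters of $c_1(P),\dots,c_{j-1}(P)$ fix the relevant lattices appropriately. For the inductive step, every letter $\beta$ of $c_j(P)$ lies in $\{0,\dots,b_P(j)-1\}\cup\{j,\dots,n-1\}$. The parahoric $\widehat P_\beta$ stabilizes $\Lambda_r^0$ for $r\neq\beta$, and in particular it stabilizes $\Lambda^0_{r}$ for $b_P(j)\le r\le j-1$.

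The main difficulty is that $\widehat P_0$ moves $E=\Lambda_0^0$, and $\widehat P_r$ for $r<b_P(j)$ moves $\Lambda^0_r$. This would seem to destroy the earlier lattices. The resolution is to work inside the Levi-type subgroup $G_j$, which acts on $U_j=t^{-1}\Lambda^0_{b_P(j)}/\Lambda^0_{j-1}$ and fixes the chain $\Lambda^0_{b_P(j)}\subset\cdots\subset\Lambda^0_{j-1}$ and $t^{-1}\Lambda^0_{b_P(j)}$. One should not claim that $\widehat P_\beta$ fixes all of $\Lambda^0_0,\dots,\Lambda^0_{j-1}$. Instead, define $\rho$ using the factorization $q_{N_{j-1}+1}\cdots q_{N_j}\in G_j\widehat B$, which holds because $\widehat P_\beta=(G_j\cap\widehat P_\beta)\widehat B$. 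Then replace each $q$ by a representative in $G_j$ modulo $\widehat B$.

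With this normalization, write $h_j:=q_{N_{j-1}+1}\cdots q_{N_j}\in G_j$. Then $g_j=g_{j-1}h_j$, and
\[
g_j\Lambda^0_j=g_{j-1}\bigl(\Lambda_{j-1}^0+h_j\ell_j^0\bigr)\subset g_{j-1}t^{-1}\Lambda^0_{b_P(j)}=t^{-1}\Lambda_{b_P(j)}.
\]
This gives $t\Lambda_j\subset\Lambda_{b_P(j)}$ and $\dim\Lambda_j/\Lambda_{j-1}=1$. A precise form of the induction hypothesis, namely that $g_{j-1}$ carries $\Lambda^0_r$ to $\Lambda_r$ for $r\le j-1$ and $t^{-1}\Lambda^0_{b_P(j)}$ to $t^{-1}\Lambda_{b_P(j)}$, follows from $t$-linearity of $\widehat G$. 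Equivalently, this exhibits $Z_P$ as an iterated bundle $Z_P^{(j)}\to Z_P^{(j-1)}$ with fiber $Z(c_j(P))$, and $\rho$ as compatible maps $Z_P^{(j)}\to Y_P^{(j)}$ covering the $\mathbf P(U_j)$-bundle structure of Lemma \ref{compactification lemma}. Fibrewise, this map is $\phi_j$ of \eqref{phi_{P,j}}, transported by $g_{j-1}$, which identifies $U_j$ with $t^{-1}\Lambda_{b_P(j)}/\Lambda_{j-1}$.

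Birationality then follows by induction on the tower. Over $Y_P^{(j-1)}$, the map $Z_P^{(j)}\to Y_P^{(j)}\times_{Y_P^{(j-1)}}Z_P^{(j-1)}$ is fibrewise $\phi_j$. By the Bott--Samelson property, $\phi_j$ is an isomorphism over the open cell $C_j^0$. The translates of $C_j^0$ by $g_{j-1}$ form a Zariski-dense open subset of the $\mathbf P^{m_j}$-bundle. Composing, $\rho$ restricts to an isomorphism over a dense open subset of $Y_P$, and this subset is nonempty because $Y_P$ is irreducible, being a tower of projective bundles over a point. I expect the main obstacle to be the careful bookkeeping showing that the $G_j$-representatives can be chosen compatibly, so that the earlier lattices are genuinely preserved. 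If that fails, the fallback is to verify the incidence condition $t\Lambda_j\subset\Lambda_{b_P(j)}$ directly on the open cell and conclude by closedness.
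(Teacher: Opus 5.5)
\textbf{Gap: the incidence $t\Lambda_j\subset\Lambda_{b_P(j)}$.} Much of your proposal matches the paper: $\Lambda_j:=g_j\Lambda^0_j$ from partial products, properness from projectivity of $Z_P$, and birationality by induction up the tower via the fibrewise identification of $Z_P^{(j)}\to Z_P^{(j-1)}\times_{Y_P^{(j-1)}}Y_P^{(j)}$ with $\phi_j$. The gap is in the one nontrivial point, showing that the image lies in $Y_P$.

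Your stated induction hypothesis ``$g_{j-1}\Lambda^0_r=\Lambda_r$ for all $r\le j-1$'' is false. Take $P=B$ and $n=3$, so $\mathbf i_B=(2,1,0,2,1,0)$. Choose $q_3\in\widehat P_0\setminus\widehat B$ and set every other $q_a=1$. Then $g_2E=q_3E\neq E=\Lambda_0$. Your $G_j$-normalization does not rescue this claim, since $h_{j'}$ from later blocks of $P$ still move $\Lambda^0_r$ for small $r$.

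The instance you actually use, $g_{j-1}\Lambda^0_{b_P(j)}=\Lambda_{b_P(j)}$, is true. However, it does not follow from $t$-linearity of $\widehat G$. It needs a combinatorial argument, and this is exactly where the block structure of $P$ enters. For every $j'$ with $b_P(j)<j'\le j$, the index $j'$ lies in the same block of $P$ as $j$, so $b_P(j')=b_P(j)$. Hence the word $c_{j'}(P)$ omits the node $b_P(j)$. Therefore every factor in $c_{b_P(j)+1}(P),\dots,c_j(P)$ stabilizes $\Lambda^0_{b_P(j)}$, which gives
\[
g_j\Lambda^0_{b_P(j)}=g_{b_P(j)}\Lambda^0_{b_P(j)}=\Lambda_{b_P(j)}.
\]
Since $t\Lambda^0_j\subset E\subset\Lambda^0_{b_P(j)}$, this yields $t\Lambda_j=g_j(t\Lambda^0_j)\subset\Lambda_{b_P(j)}$.

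\textbf{The ``main difficulty'' is not real.} Each $\Lambda_r$ is defined from $g_r$, so later factors cannot destroy it. All you need is:
\begin{itemize}
\item $g_j$ agrees with $g_{j-1}$ on $\Lambda^0_{j-1}$, because $c_j(P)$ omits the node $j-1$; this gives $\Lambda_{j-1}\subset\Lambda_j$ with one-dimensional quotient.
\item $g_j$ agrees with $g_{b_P(j)}$ on $\Lambda^0_{b_P(j)}$, as shown above.
\end{itemize}
No passage to $G_j$-representatives is required. With these two facts in place of your induction hypothesis, the rest of your argument, including the birationality step, goes through as in the paper.
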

\begin{proof}
Set $k_0=0$ and $k_j:=m_1+\cdots+m_j$ for $0\leq j\leq n$. Denote a point of $Z_P$ by $[q_1,\dots, q_{k_n}]$. Define $g_j:=q_1q_2\dots q_{k_j}$ and $\Lambda_j:=g_j\Lambda_j^0$. This lattice is well-defined because under the twisted-product equivalence relation, each $g_j$ changes only by right multiplication by an element of $\hat B$, and this stabilizes $\Lambda_j^0$. The nodes occurring in $c_j(P)$ are precisely the nodes outside $\{b_P(j),\dots,j-1\}$. Hence every factor in the $j$th block fixes $\Lambda_{j-1}^0$, and therefore
$$\Lambda_{j-1}=g_{j-1}\Lambda_{j-1}^0=g_j\Lambda_{j-1}^0\subset g_j\Lambda_{j}^0=\Lambda_j.$$
Similarly, every factor in the blocks
$c_{b_P(j)+1}(P),\dots,c_j(P)$ fixes $\Lambda_{b_P(j)}^0$, and consequently
$$g_j\Lambda_{b_P(j)}^0=g_{b_P(j)}\Lambda_{b_P(j)}^0=\Lambda_{b_P(j)}.$$
Furthermore, multiplication by $t$ commutes with $g_j$ and the standard lattice clearly satisfies $t\Lambda_j^0\subset\Lambda_{b_P(j)}^0$. Together these two facts imply $t\Lambda_j\subset \Lambda_{b_P(j)}$
and this shows the lattice $\Lambda_\bullet$ defines a point of $Y_P$. This construction is algebraic, and hence gives a morphism
$$
    \rho:Z_P\longrightarrow Y_P.
$$
Moreover, $\rho$ is proper because $Z_P$ is projective and $Y_P$ is separated.

Now, recall the intermediate varieties $Y_P^{(j)}$. We may similarly define $Z_P^{(j)}$ via
$$
    Z_P^{(j)}:=Z(\mathbf{i}_P^{(j)}),\;\text{ for } \mathbf{i}_P^{(j)}:=(c_1(P),\dots,c_j(P)).
$$
Thus we have in fact constructed maps $\rho^{(j)}:Z_P^{(j)}\rightarrow Y_P^{(j)}$ for all $0\leq j\leq n$. Let
$$
    \tilde{\pi}_j:Z_P^{(j)}\longrightarrow Z_P^{(j-1)},
    \qquad
    \pi_j:Y_P^{(j)}\longrightarrow Y_P^{(j-1)}
$$
be the truncation of the last factor maps. We have the following Cartesian diagram

$$\begin{tikzcd}
Z_P^{(j)} \arrow[rrd, "\rho^{(j)}", bend left] \arrow[rdd, "\tilde{\pi}_j"', bend right] \arrow[rd, "{\tilde{\phi}_{j}}", dashed] &                                                                                               &                              \\  
& Z_P^{(j-1)}\times_{Y_{P}^{(j-1)}}Y_P^{(j)} \arrow[r, "\text{pr}_2"] \arrow[d, "\text{pr}_1"'] & Y_P^{(j)} \arrow[d, "\pi_j"] \\
 & Z_P^{(j-1)} \arrow[r, "\rho^{(j-1)}"] & Y_P^{(j-1)}                 
\end{tikzcd}
$$
The universal property implies the existence of a morphism $\tilde\phi_{j}$ labeled in the diagram. Moreover, by viewing both the source and target of $\tilde\phi_{j}$ as schemes over $Z_P^{(j-1)}$ using the lower triangle of the diagram, we see that over a point $z\in Z_P^{(j-1)}$, $\text{pr}_1^{-1}(z)\simeq\mathbf{P}^{m_{j}}$ and $\tilde{\pi}_j^{-1}(z)\simeq Z(c_j(P)).$ Thus, $\tilde{\phi}_{j}$ restricted to $\tilde{\pi}_j^{-1}(z)$ may be identified with the map appearing in \ref{phi_{P,j}}:
$$\tilde{\phi}_{j}\vert_{\tilde{\pi}_j^{-1}(z)}\simeq\phi_{j}:Z(c_j(P))\rightarrow\mathbf{P}^{m_j}.$$
Recall $\phi_j$ is an isomorphism over the open Bruhat cell $C_j^0\subset \mathbf{P}^{m_j}$.By the $\hat B$-equivariance of $\phi_j$, the morphism
$\tilde{\phi}_j$ is locally $\operatorname{id}\times\phi_j$ over a
dense open subset of $Z_P^{(j-1)}$. Thus, $\tilde{\phi}_j$ is an isomorphism over a dense open subset $C_j$ of the fiber product.
We now proceed by induction. Suppose that $\rho^{(j-1)}$ is an
isomorphism over a dense open subset
$\Omega_{j-1}\subset Y_P^{(j-1)}$. Since
$\operatorname{pr}_2$ is the base change of $\rho^{(j-1)}$, it is an
isomorphism over the dense open subset
$\pi_j^{-1}(\Omega_{j-1})$, and hence is birational. Therefore $\rho^{(j)}=\operatorname{pr}_2\circ\widetilde\phi_j$
is birational. Induction up to $j=n$ proves the result.
\end{proof}

\subsection{Proof of main results}
Let us first recall the construction of marked line bundles on the Bott-Samelson varieties $Z_P$ and state their corresponding Borel-Weil-Bott theorem following \cite{LakshmibaiLittelmannMagyar2002}. We may assume $P\neq G$, since the case $P=G$ is immediate. Set
$k_0:=0$ and $k_j:=m_1+\cdots+m_j$ for $1\leq j\leq n$, and write
$\mathbf{i}_P=(i_1,\dots,i_{k_n})$. For each position $a$, denote the maximal parabolic $\hat{R}_{i_a}$ of $\hat{G}$ corresponding to omitting the simple root $\alpha_{i_a}$. Recall the configuration map
\begin{equation}\label{mu_a}
\mu_a:Z_P\rightarrow\hat{G}/\hat{R}_{i_a},\;[q_1,\dots, q_{k_n}]\mapsto (q_1\dots q_{a})\hat{R}_{i_a}
\end{equation}

Let $\mathbf{d}_P:=(d_1,\dots, d_{k_n})\in(\mathbf{Z}_{\geq0})^{k_n}$ denote a \textit{marking}. Let
\begin{equation}
    \mathcal{O}_i(1):=\hat{G}\times^{\hat{R}_i}\mathbf{C}_{-\hat{\varpi}_i},\; 0\leq i\leq n-1
\end{equation}
be the fundamental line bundles on $\hat{G}/\hat{R}_i$ and denote the marked line bundle
\begin{equation}
    \mathcal N_{\mathbf i_P,\mathbf{d}_P}
    :=
    \bigotimes_{a=1}^{k_n}
    \mu_a^*\bigl(\mathcal O_{i_a}(1)\bigr)^{\otimes d_a}.
\end{equation}

Next, following \cite[Section 1.1,4.1]{LakshmibaiLittelmannMagyar2002}, we recall the definition of the generalized Demazure module. For $0\leq i\leq n-1$, let $F_i$ be the negative Chevalley generator corresponding to $\alpha_i$, and set
\[
\mathcal U_i:=\bigoplus_{l\geq0}\mathbf C\frac{F_i^l}{l!}\subset U(\hat{\mathfrak g}).
\]
For each $1\leq a\leq k_n$, set $\lambda_a:=d_a\hat{\varpi}_{i_a}$. Let $V_{\lambda_a}$ be the corresponding integrable highest-weight module and let $v_{\lambda_a}$ be its highest-weight vector. Define the \textit{generalized Demazure module} by
\begin{equation}\label{affine Demazure module}
V_{\mathbf{i}_P,\mathbf{d}_P}
:=
\mathcal U_{i_1}\left(
v_{\lambda_1}\otimes
\mathcal U_{i_2}\left(
v_{\lambda_2}\otimes\cdots\otimes
\mathcal U_{i_{k_n}}v_{\lambda_{k_n}}
\right)\cdots
\right)
\subset
\bigotimes_{a=1}^{k_n}V_{\lambda_a},
\end{equation}
where each $\mathcal U_{i_a}$ acts diagonally on the tensor factors inside the corresponding parentheses. This is the generalized Demazure module associated with $\mathbf{i}_P$ and $\mathbf d_P$.

Consider the configuration map
\begin{equation}
\phi_P:Z_P\rightarrow
\mathbf{P}\left(\bigotimes_{a=1}^{k_n}V_{\lambda_{a}}\right),\; [q_1,\dots,q_{k_n}]
\longmapsto
\left[
q_1v_{\lambda_{1}}
\otimes(q_1q_2)v_{\lambda_{2}}
\otimes\dots\otimes
(q_1\dots q_{k_n})v_{\lambda_{k_n}}
\right].
\end{equation}
Thus $V_{\mathbf{i}_P,\mathbf{d}_P}$ may also be characterized by the property that the image of $\phi_P$ spans $\mathbf{P}(V_{\mathbf{i}_P,\mathbf{d}_P})$. Since the image of $\phi_P$ spans $\mathbf P(V_{\mathbf i_P,\mathbf d_P})$, pulling back linear forms gives an injective map
\begin{equation}\label{phi_P^*}
\phi_P^*:V_{\mathbf{i}_P,\mathbf{d}_P}^\vee
\rightarrow
H^0(Z_P,\mathcal{N}_{\mathbf{i}_P,\mathbf{d}_P})    
\end{equation}

\noindent It turns out $\phi_P^*$ is an isomorphism due to the Borel--Weil theorem for affine Bott--Samelson varieties:

\begin{thm}\cite{LakshmibaiLittelmannMagyar2002}\label{Bott-Borel for Bott-Samelson}
Let $\mathbf{i}_P=(i_1,\dots,i_{k_n})$ be a word and suppose
$\mathbf{d}_P\in(\mathbf{Z}_{\geq0})^{k_n}$. Let
$V_{\mathbf{i}_P,\mathbf{d}_P}$ denote the corresponding generalized affine Demazure module \ref{affine Demazure module}. Then
\[
H^0(Z_P,\mathcal{N}_{\mathbf{i}_P,\mathbf{d}_P})
\simeq
V_{\mathbf{i}_P,\mathbf{d}_P}^\vee,
\qquad
H^i(Z_P,\mathcal{N}_{\mathbf{i}_P,\mathbf{d}_P})=0
\quad\text{for }i\geq1.
\]
\end{thm}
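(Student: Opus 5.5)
The plan is to prove both assertions together by induction on the length $N$ of the word $\mathbf i=(i_1,\dots,i_N)$, using the fibration structure of $Z(\mathbf i)$. The projection $[q_1,\dots,q_N]\mapsto q_1\widehat B$ realizes $Z(\mathbf i)$ as a bundle over $\widehat P_{i_1}/\widehat B\simeq\mathbf P^1$, namely
\[
Z(\mathbf i)\simeq \widehat P_{i_1}\times^{\widehat B}Z(\mathbf i'),\qquad \mathbf i'=(i_2,\dots,i_N),
\]
with fiber $Z(\mathbf i')$. Under this identification the marked bundle decomposes: the factor $\mu_1^*\mathcal O_{i_1}(1)^{\otimes d_1}$ is pulled back from the base, while the remaining factors restrict on the fiber to $\mathcal N_{\mathbf i',\mathbf d'}$, twisted $\widehat B$-equivariantly. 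Concretely, $\mathcal N_{\mathbf i,\mathbf d}$ is the bundle $\widehat P_{i_1}\times^{\widehat B}\bigl(\mathbf C_{-\lambda_1}\boxtimes\mathcal N_{\mathbf i',\mathbf d'}\bigr)$. The Leray spectral sequence for this projection, together with the inductive hypothesis (the fiberwise cohomology is $V_{\mathbf i',\mathbf d'}^\vee$ in degree $0$ and vanishes above), reduces everything to computing the cohomology over $\mathbf P^1$ of the homogeneous bundle
\[
\widehat P_{i_1}\times^{\widehat B}\bigl(\mathbf C_{-\lambda_1}\otimes V_{\mathbf i',\mathbf d'}^\vee\bigr)=\widehat P_{i_1}\times^{\widehat B}M^\vee,\qquad M:=v_{\lambda_1}\otimes V_{\mathbf i',\mathbf d'}.
\]
Here $M$ is a $\widehat B$-stable subspace of the $\widehat P_{i_1}$-module $V_{\lambda_1}\otimes\bigotimes_{a\geq2}V_{\lambda_a}$ (more precisely, of its restriction to the $\mathrm{SL}_2$-Levi of $\widehat P_{i_1}$, which is all that matters). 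The desired outputs are then
\[
H^0\bigl(\widehat P_{i_1}/\widehat B,\ \widehat P_{i_1}\times^{\widehat B}M^\vee\bigr)\simeq(\mathcal U_{i_1}M)^\vee,
\qquad
H^1\bigl(\widehat P_{i_1}/\widehat B,\ \widehat P_{i_1}\times^{\widehat B}M^\vee\bigr)=0 .
\]
Since $\mathcal U_{i_1}M=V_{\mathbf i,\mathbf d}$ by definition \ref{affine Demazure module}, this would close the induction for both statements simultaneously.

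The crux is a lemma about $\mathrm{SL}_2$: if $N$ is an integrable module for the Levi $\mathrm{SL}_2$ of $\widehat P_{i}$, and $M\subset N$ is a $\widehat B$-submodule of the right shape, then the $\mathrm{SL}_2$-module generated by $M$ is $\mathcal U_iM$, and induction of $M^\vee$ from the Borel to $\mathrm{SL}_2$ has no $H^1$. For bare $B_{\mathrm{SL}_2}$-submodules this is false (a lowest weight line is a counterexample). I expect this lemma to be the main obstacle, since the right notion of ``right shape'' has to be found and shown to survive the inductive step. I would first try to prove that $M$ admits a filtration whose subquotients are Demazure-type modules for $\mathrm{SL}_2$. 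Each such piece is spanned by the top $\ell+1$ weight vectors $\mathcal U_i$-generated by a highest weight vector of an irreducible $V(m)$. For these pieces both claims can be checked directly on $\mathbf P^1$: $H^0$ is the dual of $V(m)$ and $H^1=0$ because the relevant line bundles have degree $\geq -1$. Then I would show that this filtration property propagates, by arguing that tensoring with $v_{\lambda_1}$ (a dominant highest weight line) preserves it and that $\mathcal U_{i_1}$-saturation produces the next such module. This is the analogue of the Polo/Mathieu theory of excellent filtrations, and I would model the argument on that theory.

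If this route gets stuck, my fallback for the vanishing is Frobenius splitting. I would work over $\mathbf F_p$, where $Z(\mathbf i)$ is split compatibly with its boundary divisors. The bundle $\mathcal N_{\mathbf i,\mathbf d}$ is a tensor product of pullbacks of globally generated bundles, hence nef, so split-variety vanishing gives $H^{i}=0$ for $i\geq1$; semicontinuity would then transfer this to characteristic $0$. Given vanishing, the $H^0$ statement reduces to a character count. Injectivity of $\phi_P^*$ in \ref{phi_P^*} is already known, so it would suffice to match $\dim H^0=\chi(\mathcal N_{\mathbf i,\mathbf d})$ with $\dim V_{\mathbf i,\mathbf d}$. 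By the Leray computation above, the Euler characteristic is given by iterated Demazure operators, so the remaining step is to show that the character of $V_{\mathbf i,\mathbf d}$ satisfies the same recursion.
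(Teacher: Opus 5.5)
There is a genuine gap, and it sits in the rank-one step that your induction reduces to.

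\textbf{What is fine.} Your outer induction works. Leray for $Z(\mathbf i)=\widehat P_{i_1}\times^{\widehat B}Z(\mathbf i')\to\mathbf P^1$, together with the inductive hypothesis, reduces both claims to a rank-one statement for $M=v_{\lambda_1}\otimes V_{\mathbf i',\mathbf d'}$. This matches the shape of the argument the paper attributes to \cite{LakshmibaiLittelmannMagyar2002}.

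\textbf{Where the difficulty actually is.} The $H^1$ half of your rank-one lemma is automatic. For any $\widehat B$-submodule $M$ of an integrable module, $\mathcal U_iM$ is $\widehat P_i$-stable. So $\widehat P_i\times^{\widehat B}M^\vee$ is a quotient of the trivial bundle with fiber $(\mathcal U_iM)^\vee$, hence globally generated on $\mathbf P^1$, hence has no $H^1$. In particular, a $\widehat B$-stable line is killed by $E_i$, so it is a harmless highest-weight line, not a lowest-weight one. Everything therefore lies in the $H^0$ half, and there your ``top $\ell+1$ weight vectors'' pieces fail for $0<\ell<m$. Take $M=\operatorname{span}(v,F_iv)\subset N\simeq V(2)$. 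Dualizing $0\to M\to N\to N/M\to0$ gives
\[
0\to\mathcal O(-2)\to\mathcal O^{3}\to\widehat P_i\times^{\widehat B}M^\vee\to0,
\]
so $h^0=3+h^1(\mathcal O(-2))=4>3=\dim\mathcal U_iM$. In general one gets $h^0=(\ell+1)(m+1-\ell)$.

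This same $M$ is filtered by highest-weight lines of $\alpha_i$-weights $2$ and $0$, each of which is individually fine. So no abstract filtration by good pieces can give $H^0=(\mathcal U_iM)^\vee$. What is needed is the string property relative to the embedding: for some decomposition $\mathcal U_iM\simeq\bigoplus_jV(m_j)$, $M$ is the sum of its intersections with the summands, and each intersection is either the whole summand or its highest-weight line.

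\textbf{The missing idea.} The real content of the theorem is proving that $v_{\lambda_1}\otimes V_{\mathbf i',\mathbf d'}$ has this property for the $\mathrm{SL}_2$ of an \emph{arbitrary} next letter $i_1$, not only for letters already used to build $V_{\mathbf i',\mathbf d'}$. Your proposal offers no mechanism for this. It is exactly where the paper, following \cite{LakshmibaiLittelmannMagyar2002}, uses Littelmann's standard monomial (LS-path) basis in the Kac--Moody setting \cite[Theorem 4]{Littelmann1998}, together with a spanning and dimension count. Excellent filtrations in the sense of Polo and Mathieu are another way to organize it, but the required input has to be proved, not assumed.

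\textbf{Your fallback does not bypass this.} Frobenius splitting is a legitimate route to the vanishing, provided you use that $Z(\mathbf i)$ is split by $(p-1)\partial Z(\mathbf i)$ with $\partial Z(\mathbf i)$ supporting an ample divisor; splitting plus nefness alone does not give vanishing. But once vanishing is known, your Leray computation yields
\[
\mathrm{ch}\,H^0(Z(\mathbf i),\mathcal N_{\mathbf i,\mathbf d})^\vee=D_{i_1}\bigl(e^{\lambda_1}\,\mathrm{ch}\,V_{\mathbf i',\mathbf d'}\bigr),
\]
where $D_{i_1}$ is the Demazure operator. Matching this with $\mathrm{ch}\,V_{\mathbf i,\mathbf d}=\mathrm{ch}\,\mathcal U_{i_1}M$ is the generalized Demazure character formula, which is equivalent to the same string property. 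So the $H^0$ assertion, i.e.\ surjectivity of $\phi_P^*$, remains unproved on both routes. On your main route the vanishing at length $N$ also breaks: it needs $H^0(Z(\mathbf i'),\mathcal N_{\mathbf i',\mathbf d'})\simeq V_{\mathbf i',\mathbf d'}^\vee$ at length $N-1$ to get global generation.
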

Although \cite[Theorem 6]{LakshmibaiLittelmannMagyar2002} is written explicitly for finite-dimensional groups, the authors state that their results extend to symmetrizable Kac--Moody algebras \cite[p.~293]{LakshmibaiLittelmannMagyar2002}. Indeed, since $\mathbf{i}_P$ is a finite word, $Z_P$ is a finite-dimensional iterated $\mathbf P^1$-bundle, and the proof of higher-cohomology vanishing in \cite[Section 4.2]{LakshmibaiLittelmannMagyar2002} applies without change. It proceeds by induction on the length of the word and uses at each step only the rank-one calculation for $\widehat P_i/\widehat B\simeq\mathbf P^1$. The proof that $\phi_P^*$ is an isomorphism also carries over: the standard monomial basis in the Kac--Moody case is provided by \cite[Theorem 4]{Littelmann1998}, and subsequently the spanning and dimension argument of \cite[Sections 2--4]{LakshmibaiLittelmannMagyar2002} is unchanged.

Now, let $\lambda=\sum_{i=1}^{n-1}a_i\varpi_i\in\mathbf X^+$ and let
$r\in\mathbf Z_{\geq0}$. Define the corresponding marking $\mathbf{d}_P:=(d_1,\dots, d_{k_n})$ with $d_{k_j}=a_j$ for all $1\leq j<n,$ $d_{k_n}=r$, and all other $d_a$ are 0. Denote the corresponding line bundle
$$\mathcal{N}_P(\lambda,r):=\mathcal{N}_{\mathbf{i}_P,\mathbf{d}_P}.$$
\begin{lemma}\label{pullback along rho}
Let $\rho:Z_P\rightarrow Y_P$ be the map defined in Proposition \ref{def of rho}. There is an isomorphism
\begin{equation}
    \mathcal N_P(\lambda,r)
    \simeq
    \rho^*\mathcal M_P(\lambda,r).
\end{equation}
\end{lemma}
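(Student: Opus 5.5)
Both sides are tensor products, so it suffices to match factors. Concretely, I will show
\[
\mu_{k_j}^*\mathcal O_{i_{k_j}}(1)\simeq\rho^*\mathcal L_{P,j}\quad (1\le j<n),
\qquad
\mu_{k_n}^*\mathcal O_{i_{k_n}}(1)\simeq\rho^*\mathcal A_P .
\]
Taking tensor powers $a_j$ and $r$ then gives the lemma.

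First I identify the last letter of each block. By \eqref{c_j(P)}, the last letter of $c_j(P)$ is $\beta_1=j$ when $j<n$. When $j=n$, the block $c_n(P)=(b_P(n)-1,\dots,1,0)$ ends in $0$, since $P\neq G$ forces $b_P(n)\ge1$. So $i_{k_j}=j$ for $j<n$ and $i_{k_n}=0$.

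Next I compute $\mu_{k_j}$ in lattice terms. By definition, $\mu_{k_j}([q])=g_j\hat R_{i_{k_j}}$. The maximal parahoric $\hat R_j$ is the stabilizer of the single standard lattice $\Lambda^0_j$ (for $j=0$ it is the stabilizer of $\Lambda^0_0=E$, which equals $\Lambda^0_n$ up to $t$). Thus $\hat G/\hat R_j$ is the space of lattices $\Lambda$ with $t\Lambda_0^0\subset\Lambda$ in the $\hat G$-orbit of $\Lambda_j^0$, i.e.\ with relative dimension $j$ against $E$. Under this identification, $\mu_{k_j}$ sends $[q]$ to $g_j\Lambda^0_j=\Lambda_j$, which by Proposition \ref{def of rho} is the $j$th lattice of $\rho([q])$. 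For $j=n$ it sends $[q]$ to $g_n\Lambda_n^0=\Lambda_n$. Hence $\mu_{k_j}=\mathrm{pr}_j\circ\rho$, where $\mathrm{pr}_j:Y_P\to\hat G/\hat R_j$ remembers $\Lambda_j$. It therefore suffices to identify $\mathrm{pr}_j^*\mathcal O_j(1)$ on $Y_P$.

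For $1\le j<n$ I use the standard fact that $\mathcal O_j(1)=\hat G\times^{\hat R_j}\mathbf C_{-\hat\varpi_j}$ is the determinant-of-relative-position line bundle: its fiber at $\Lambda$ is $\det(\Lambda/E)^\vee\otimes\det(E/tE')$-type factors, i.e.\ $\det(\Lambda/\Lambda^0_0)^\vee$ up to the central normalization. I would check this by computing the character by which $\hat R_j$ acts on the fiber $\det(\Lambda_j^0/E)^\vee$ over the base point. On $T$ this character is $-\varpi_j$. The central $\mathbf G_m$, normalized to act with weight $1$ on the determinant line, contributes the $\hat\varpi_0$ part, consistent with $\hat\varpi_j=\hat\varpi_0+\varpi_j$. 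Pulling back gives $\mathrm{pr}_j^*\mathcal O_j(1)\simeq\det(\mathcal S_j)^\vee=\mathcal L_{P,j}$.

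For $j=n$ the node is $0$ and the lattice is $\Lambda_n\subset t^{-n}E$. Here $\mathcal O_0(1)$ should be the determinant line bundle $\det(\Lambda_n/E)^\vee$, tensored with the trivial (weight-zero for $T$) line $\det V$. This gives $\det(\mathcal S_n)^\vee\otimes\det V=\mathcal A_P$.

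The main obstacle is this normalization check: showing that the central extension conventions make $\mathcal O_i(1)$, pulled back, exactly the determinant bundles with no extra twist. I would handle it by verifying equivariance characters at the base points $\Lambda^0_j$ and $\Lambda^0_n$, including the action of $c=\sum\alpha_i^\vee$. I would also use that $\mathrm{Pic}$ of $\hat G/\hat R_j$ is generated by the determinant line bundle, so that matching $\hat R_j$-characters suffices.
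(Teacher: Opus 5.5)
Your proposal is correct and follows the paper's proof: you factor each relevant configuration map $\mu_{k_j}$ as $\rho$ followed by a lattice-remembering map $Y_P\to\hat G/\hat R_j$, then identify $\mathcal O_j(1)$ with the dual relative determinant bundle by comparing $\hat R_j$-characters at the base point. The base-point check suffices because both bundles are $\hat G$-homogeneous; the $\mathrm{Pic}$ remark is not needed. The only cosmetic difference is at node $0$: you model $\hat G/\hat R_0$ as the orbit of $\Lambda_n^0=t^{-1}E$, remember $\Lambda_n$, and treat $\det V$ as a constant line, whereas the paper remembers $t\Lambda_n$ and obtains the factor $\det(E/tE)\simeq\det V$ directly from $\mathrm{Det}(t\Lambda_n:E)$.
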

\begin{proof}
By construction of the word $\mathbf{i}_P$, we have
$i_{k_j}=j$ for $1\leq j<n$ and $i_{k_n}=0$. Thus the relevant
configuration maps are
\(
    \mu_{k_i}:Z_P\longrightarrow\widehat G/\widehat R_i
    \quad(1\leq i<n),
    \text{ and }
    \mu_{k_n}:Z_P\longrightarrow\widehat G/\widehat R_0.
\)
Identifying $\hat{G}/\hat{R}_i$ with the $\hat{G}$-orbit of the standard lattice $\Lambda_i^0$, define
\[
    \lambda_i:Y_P\longrightarrow\widehat G/\widehat R_i,
    \quad \Lambda_\bullet\longmapsto\Lambda_i
    \quad(1\leq i<n),
    \qquad
    \lambda_0:Y_P\longrightarrow\widehat G/\widehat R_0,
    \quad \Lambda_\bullet\longmapsto t\Lambda_n.
\]
We obtain the
commutative diagrams
\[
\begin{tikzcd}
Z_P \arrow[r,"\rho"] \arrow[rd,"\mu_{k_i}"']
    & Y_P \arrow[d,"\lambda_i"]\\
    & \widehat G/\widehat R_i
\end{tikzcd}
\quad(1\leq i<n),
\qquad
\begin{tikzcd}
Z_P \arrow[r,"\rho"] \arrow[rd,"\mu_{k_n}"']
    & Y_P \arrow[d,"\lambda_0"]\\
    & \widehat G/\widehat R_0.
\end{tikzcd}
\]
Indeed, writing $g_j=q_1\cdots q_{k_j}$, the first diagram follows from
$g_i\Lambda_i^0=\Lambda_i$, while the second follows from $g_nE=tg_n(t^{-1}E)=tg_n(\Lambda_n^0)=t\Lambda_n.$

Recall that two lattices $L,L'\subset V((t))$ are called commensurable if $L\cap L'$ has finite codimension in both \(L\) and \(L'\). Now, for two commensurable lattices $L$ and $L'$, define their relative determinant as
$$\text{Det}(L:L'):=\text{det}(L/M)\otimes\text{det}(L'/M)^\vee$$
where $M\subset L\cap L'$ is an arbitrary common sublattice such that $L/M$ and $L'/M$ are finite-dimensional \cite[1.5.6]{Zhu2017}. This is independent of choice of $M$. Let $\mathbf{\Lambda}_i$ be the universal bundle over $\hat{G}/\hat{R}_i$ whose fiber over $g\hat{R}_i$ is the lattice $g\Lambda_i^0$. Define the relative determinant line bundle $\text{Det}(\mathbf{\Lambda}_i:E)$ over $\hat{G}/\hat{R}_i$. We claim that 
\[
\operatorname{Det}(\mathbf\Lambda_i:E)\simeq\mathcal O_i(-1)
\qquad(0\leq i<n).
\]

Indeed, both line bundles are homogeneous $\widehat G$-bundles; thus it
suffices to compare their fibers at the standard point as
$\widehat R_i$-representations. The fibers are:
\[
\operatorname{Det}(\mathbf\Lambda_i:E)_{\Lambda_i^0}
\simeq
\begin{cases}
\det(\Lambda_i^0/E)
=
\operatorname{Span}_{\mathbf C}
(t^{-1}e_1\wedge\cdots\wedge t^{-1}e_i),
&\text{if }1\leq i<n,\\
\det(E/E)=\mathbf{C},
&\text{if }i=0.
\end{cases}
\]
Thus the ordinary torus $T$ acts with weight $\varpi_i$ when
$1\leq i<n$ and trivially when $i=0$. Since the central
$\mathbf G_m$ acts with weight one, the corresponding
$\widehat R_i$-character is $\widehat\varpi_i$ in either case. This proves the claim.

 Pulling back $\mathcal O_i(1)$ for $0\leq i<n$ and using
$E\subset\Lambda_i$ as the common sublattice for $i\neq 0$, and $tE$ as the common sublattice for $i=0$, we find, respectively,
\[
\begin{aligned}[t]
\mu_{k_i}^*(\mathcal O_i(1))
  &\simeq \rho^*\lambda_i^*(\mathcal O_i(1))
\\
  &\simeq \rho^*\!\left(
      \lambda_i^*\operatorname{Det}(\mathbf\Lambda_i:E)^\vee
    \right)
\\
  &\simeq \rho^*(\det(\mathcal S_i)^\vee)
\\
  &\simeq \rho^*(\mathcal L_{P,i})
\end{aligned}
\hspace{2.5em}
\begin{aligned}[t]
\mu_{k_n}^*(\mathcal O_0(1))
  &\simeq \rho^*\lambda_0^*(\mathcal O_0(1))
\\
  &\simeq \rho^*\lambda_0^*
      \operatorname{Det}(\mathbf\Lambda_0:E)^\vee
\\
  &\simeq \rho^*
      \operatorname{Det}(t\mathbf\Lambda_n:E)^\vee
\\
  &\simeq \rho^*\!\left(
      \det(t\mathbf\Lambda_n/tE)^\vee
      \otimes\det(E/tE)
    \right)
\\
  &\simeq \rho^*\!\left(
      \det(\mathcal S_n)^\vee\otimes\det(V)
    \right)
    \simeq \rho^*\mathcal A_P .
\end{aligned}
\]

Combining the above completes the proof:
\begin{align*}
\mathcal N_P(\lambda,r)
&=
\bigotimes_{i=1}^{n-1}
\mu_{k_i}^*\mathcal O_i(1)^{\otimes a_i}
\otimes
\mu_{k_n}^*\mathcal O_0(1)^{\otimes r}\simeq
\bigotimes_{i=1}^{n-1}
\bigl(\rho^*\mathcal L_{P,i}\bigr)^{\otimes a_i}
\otimes
\bigl(\rho^*\mathcal A_P\bigr)^{\otimes r}\simeq
\rho^*\mathcal M_P(\lambda,r).\qedhere
\end{align*}
\end{proof}

\begin{prop}\label{restriction is surjective on Y_P}
For every $G$-dominant weight $\lambda$ and every $r\geq0$, restriction induces
a surjection
\[
    H^0\bigl(Y_B,\mathcal M_B(\lambda,r)\bigr)
    \twoheadrightarrow
    H^0\bigl(Y_P,\mathcal M_P(\lambda,r)\bigr).
\]
\end{prop}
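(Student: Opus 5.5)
We plan to prove the surjectivity by passing to the Bott--Samelson resolutions and using the explicit description of their sections as duals of generalized Demazure modules given in Theorem \ref{Bott-Borel for Bott-Samelson}.

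\textbf{Step 1: reduce to Bott--Samelson varieties.} Since $\rho_P:Z_P\to Y_P$ and $\rho_B:Z_B\to Y_B$ are proper birational (Proposition \ref{def of rho}) and $Y_P,Y_B$ are smooth (Lemma \ref{compactification lemma}), hence normal, Zariski's main theorem gives $\rho_*\mathcal O_{Z}=\mathcal O_{Y}$. By the projection formula and Lemma \ref{pullback along rho},
\[
H^0(Y_P,\mathcal M_P(\lambda,r))\simeq H^0(Z_P,\mathcal N_P(\lambda,r))\simeq V_{\mathbf i_P,\mathbf d_P}^\vee ,
\]
and similarly for $B$. It therefore suffices to realize the restriction map $H^0(Y_B,\mathcal M_B)\to H^0(Y_P,\mathcal M_P)$ as the dual of an injection $V_{\mathbf i_P,\mathbf d_P}\hookrightarrow V_{\mathbf i_B,\mathbf d_B}$.

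\textbf{Step 2: embed both varieties in a common projective space.} By Lemma \ref{pullback along rho} and its proof, the section spaces come from the maps $\lambda_i:Y\to\hat G/\hat R_i$ for $1\le i<n$ and $\lambda_0$. These maps are compatible with $\iota$, since $\iota$ only remembers the lattices. Hence there is a common map
\[
\Psi:Y_B\to \mathbf P\Big(\textstyle\bigotimes_i V_{a_i\hat\varpi_i}\otimes V_{r\hat\varpi_0}\Big),
\qquad \Psi\circ\rho_P=\phi_P,\quad \Psi\circ\rho_B=\phi_B
\]
(after deleting the trivial factors with $d_a=0$), and $\Psi|_{Y_P}=\Psi\circ\iota$. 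The sections of $\mathcal M_B$ coming from linear forms are then all of $V_{\mathbf i_B,\mathbf d_B}^\vee$, namely the linear forms on the span of $\Psi(Y_B)$. Likewise $H^0(Y_P,\mathcal M_P)$ consists of the linear forms on the span of $\Psi(Y_P)$. Since $\iota(Y_P)\subset Y_B$, the span $V_{\mathbf i_P,\mathbf d_P}$ is a subspace of $V_{\mathbf i_B,\mathbf d_B}$. Restricting a linear form on the larger span to the smaller span is surjective, and this restriction is exactly the restriction of sections along $\iota$.

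\textbf{Main obstacle.} The delicate point is the identification, for both $P$ and $B$, of $H^0(Y,\mathcal M)$ with linear forms on the span of $\Psi(Y)$. This requires checking that the isomorphism $\phi^*$ of Theorem \ref{Bott-Borel for Bott-Samelson} is compatible with the isomorphism $\rho^*$ of Lemma \ref{pullback along rho}. Concretely, the configuration map $\phi_P$ must factor through $\rho_P$, because the vectors $(q_1\cdots q_{k_j})v_{\lambda}$ in the nontrivially marked positions depend only on the lattices $\Lambda_j$ and $t\Lambda_n$. It must also be checked that this factorization matches the identifications $\mu_{k_i}^*\mathcal O_i(1)\simeq\rho^*\mathcal L_{P,i}$ up to scalars. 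Once this is in place, surjectivity is formal linear algebra.
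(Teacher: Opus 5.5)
Your proof is correct, and its skeleton matches the paper's. Step 1 is the paper's reduction via $\rho_*\mathcal{O}_Z\simeq\mathcal{O}_Y$ and the projection formula. In both arguments surjectivity comes from dualizing an inclusion $V_{\mathbf{i}_P,\mathbf{d}_P}\subset V_{\mathbf{i}_B,\mathbf{d}_B}$ of generalized Demazure modules, using Theorem~\ref{Bott-Borel for Bott-Samelson}.

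The difference is where the inclusion comes from. The paper builds it on the Bott--Samelson side. Each $c_j(P)$ is $c_j(B)$ with an initial substring deleted, so $\mathbf{i}_P$ is a subword of $\mathbf{i}_B$ and the marked positions (the last letter of each block) correspond. This gives a closed embedding $Z_P\hookrightarrow Z_B$ by inserting identity factors, under which $\mathcal{N}_B(\lambda,r)$ pulls back to $\mathcal{N}_P(\lambda,r)$ and restriction becomes $j^*$. The link to $\iota$ then rests on $\rho_B\circ i=\iota\circ\rho_P$.

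You instead factor the configuration maps through the lattice compactification: $\phi_B=\Psi\circ\rho_B$ and $\phi_P=\Psi\circ\iota\circ\rho_P$ (you omitted the $\iota$ in the second). You then read off the inclusion from $\iota(Y_P)\subset Y_B$ together with surjectivity of $\rho_P$, which holds since $\rho_P$ is proper and birational. So compatibility with $\iota$ is automatic and the subword structure is never used. The paper's route, by contrast, needs no auxiliary map $\Psi$ and makes $j$ explicit by inserting trivial factors.

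The obstacle you flag is milder than you suggest. The proof of Lemma~\ref{pullback along rho} already gives $\lambda_i^*\mathcal{O}_i(1)\simeq\mathcal{L}_{B,i}$ and $\lambda_0^*\mathcal{O}_0(1)\simeq\mathcal{A}_B$ on $Y_B$ itself. Hence there is some isomorphism $\mathcal{M}_B(\lambda,r)\simeq\Psi^*\mathcal{O}(1)$. Restricting it along $\iota$, using $\iota^*\mathcal{M}_B\simeq\mathcal{M}_P$, intertwines the two restriction maps, and surjectivity is insensitive to scalars. Moreover, $\Psi^*:V_{\mathbf{i}_B,\mathbf{d}_B}^\vee\to H^0(Y_B,\Psi^*\mathcal{O}(1))$ is injective because $\Psi(Y_B)$ spans $V_{\mathbf{i}_B,\mathbf{d}_B}$, and bijective by the dimension count from Step 1. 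So no finer compatibility between $\phi^*$ and $\rho^*$ is required.
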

\begin{proof}
    First we compare the sections over $Y_P$ with sections over $Z_P$. By Lemma \ref{pullback along rho}, $ \mathcal N_P(\lambda,r)
    \simeq
    \rho_P^*\mathcal M_P(\lambda,r).$ Recall $\rho_P$ is also proper and birational, and $Y_P$ is smooth, hence normal. Hence $(\rho_P)_*(\mathcal{O}_{Z_P})\simeq\mathcal{O}_{Y_P}$. Thus by the projection formula,
    \begin{align*}
    H^0\bigl(Z_P,\mathcal N_P(\lambda,r)\bigr)
    &\simeq
    H^0\bigl(Z_P,\rho_P^*\mathcal M_P(\lambda,r)\bigr)\\
    &\simeq
    H^0\!\left(
       Y_P,
       \mathcal M_P(\lambda,r)\otimes
       (\rho_P)_*\mathcal O_{Z_P}
    \right)\\
    &\simeq
    H^0\bigl(Y_P,\mathcal M_P(\lambda,r)\bigr).
\end{align*}
\noindent The above computation also holds with $P$ replaced by $B$.

Next, $\mathbf{i}_P$ is a subword of $\mathbf{i}_B$, hence there is an induced closed embedding $i:Z_P\hookrightarrow Z_B$ given by inserting identity factors in the extra components. Each $c_j(P)$ is obtained from $c_j(B)$ by deleting an initial substring (see \ref{c_j(P)}), hence the marking $\mathbf{d}_P$ is obtained from the marking $\mathbf{d}_B$ by deleting zeros. Thus, $i^*\mathcal{N}_B(\lambda,r)\simeq\mathcal{N}_P(\lambda,r)$ and we obtain a natural restriction map 
$$\text{Res}:H^0(Z_B,\mathcal{N}_B(\lambda,r))\rightarrow H^0(Z_P,i^*\mathcal{N}_B(\lambda,r))=H^0(Z_P,\mathcal{N}_P(\lambda,r)).$$

Now, at a deleted position $a$, the marking $d_{B,a}=0$ and consequently the corresponding highest weight module $V_0=\mathbf{C}$. Thus, there is again a canonical inclusion $j:V_{\mathbf{i}_P,\mathbf{d}_P}\rightarrow V_{\mathbf{i}_B,\mathbf{d}_B}$ given by inserting identity factors at the trivial representation. 

By the Borel-Weil theorem, we may identify the above restriction map using the configuration maps $\phi_P,\phi_B$, as the canonical map of Demazure modules $j^*:V_{\mathbf{i}_B,\mathbf{d}_B}^\vee\rightarrow V_{\mathbf{i}_P,\mathbf{d}_P}^\vee$. Since these modules are finite-dimensional, the linear dual map is surjective and we are done. We summarize the argument via the following commutative diagram:

\[
\begin{tikzcd}[column sep=large,row sep=large]
{H^0(Y_B,\mathcal M_B(\lambda,r))}
    \arrow[r,"\operatorname{Res}"]
    \arrow[d,"\rho_B^*"']
&
{H^0(Y_P,\mathcal M_P(\lambda,r))}
    \arrow[d,"\rho_P^*"]
\\
{H^0(Z_B,\mathcal N_B(\lambda,r))}
    \arrow[r,two heads,"\operatorname{Res}"']
&
{H^0(Z_P,\mathcal N_P(\lambda,r))}
\\
{V_{\mathbf i_B,\mathbf d_B}^{\vee}}
    \arrow[r,two heads,"j^*"']
    \arrow[u,"\phi_B^*"]
&
{V_{\mathbf i_P,\mathbf d_P}^{\vee}}
    \arrow[u,"\phi_P^*"'].
\end{tikzcd}
\]
All
vertical arrows are isomorphisms, and the bottom horizontal arrow is
surjective. Therefore both restriction maps are surjective.
\end{proof}

\begin{proof}[Proof of Theorem \ref{vanishing of higher cohomology}]

By Lemma \ref{identify with T(G/P)},  it suffices to prove
\[
H^i\bigl(
G\times^B\mathfrak u_P,
\pi_B^*\mathcal L_{G/B}(\lambda)^*
\bigr)=0
\qquad(i\geq1),
\]
where $\pi_B:G\times^B\mathfrak u_P\rightarrow G/B$ is the natural projection.

By Lemmas \ref{principal open D(sigma_P)}, \ref{restriction of M_P to open}, and \ref{pullback along rho}, we have
\[
D(\sigma_P)\simeq G\times^B\mathfrak u_P,
\qquad
\mathcal M_P(\lambda,r)\vert_{D(\sigma_P)}
\simeq
\pi_B^*\mathcal L_{G/B}(\lambda)^*,
\qquad
\rho_P^*\mathcal M_P(\lambda,r)
\simeq
\mathcal N_P(\lambda,r).
\]
Since $Y_P$ is smooth, it has rational singularities. Hence, since
$\rho_P:Z_P\to Y_P$ is a resolution, $R\rho_{P*}\mathcal O_{Z_P}\simeq\mathcal O_{Y_P}.$ Using Lemma \ref{pullback along rho} and the projection formula, we thus obtain
\[
R\rho_{P*}\mathcal N_P(\lambda,r)
\simeq
R\rho_{P*}\rho_P^*\mathcal M_P(\lambda,r)
\simeq
\mathcal M_P(\lambda,r).
\]

Multiplication by $\sigma_P$ gives transition maps
\[
\mathcal M_P(\lambda,r)\longrightarrow
\mathcal M_P(\lambda,r+1).
\]
By the standard localization identity and the fact that cohomology
commutes with filtered direct limits, we obtain
\begin{equation}\label{comparison of cohomology}
\begin{aligned}
H^i\bigl(
G\times^B\mathfrak u_P,
\pi_B^*\mathcal L_{G/B}(\lambda)^*
\bigr)
&\simeq
\varinjlim_{r\geq0}
H^i\bigl(Y_P,\mathcal M_P(\lambda,r)\bigr)\\
&\simeq
\varinjlim_{r\geq0}
H^i\bigl(Z_P,\mathcal N_P(\lambda,r)\bigr).
\end{aligned}
\end{equation}
Since $\lambda$ is dominant and $r\geq0$, the marking defining
$\mathcal N_P(\lambda,r)$ is nonnegative. Therefore Theorem
\ref{Bott-Borel for Bott-Samelson} gives
\[
H^i\bigl(Z_P,\mathcal N_P(\lambda,r)\bigr)=0
\qquad(i\geq1).
\]
Taking direct limits proves the desired vanishing.
\end{proof}

\begin{proof}[Proof of Theorem \ref{generation of zeroeth cohomology}]
Set $A:=V_L(\lambda)$. By Lemma \ref{identify with T(G/P)} and setting $i=0$ in Equation \ref{comparison of cohomology}, 
\begin{equation}
\begin{aligned}
H^0\bigl(
T^*(G/P),
\mathcal L_{T^*(G/P)}(A^*)
\bigr)
&\simeq
H^0\bigl(
G\times^B\mathfrak u_P,
\pi_B^*\mathcal L_{G/B}(\lambda)^*
\bigr)\\
&\simeq
\varinjlim_{r\geq0}
H^0\bigl(Y_P,\mathcal M_P(\lambda,r)\bigr).
\end{aligned}
\end{equation}

Let $\kappa:G\times^B\mathfrak u_P \hookrightarrow G\times^B\mathfrak u$ be the natural closed embedding. The analogous direct-limit comparison of cohomology
holds with $P$ replaced by $B$. Since $\iota^*\sigma_B=\sigma_P$, the
restriction maps in Proposition
\ref{restriction is surjective on Y_P} commute with the transition maps.
Since filtered direct limits preserve surjections, we obtain a surjection
\begin{equation}\label{restriction u to u_P}
\kappa^*:
H^0\bigl(
G\times^B\mathfrak u,
\pi^*\mathcal L_{G/B}(\lambda)^*
\bigr)
\twoheadrightarrow
H^0\bigl(
G\times^B\mathfrak u_P,
\pi_B^*\mathcal L_{G/B}(\lambda)^*
\bigr),
\end{equation}
where $\pi:T^*(G/B)\rightarrow G/B$ is the natural projection.

Since $\lambda\in\mathbf{X}^+$ is dominant, \cite[Proposition 2.6]{Broer1993} gives a surjection
\begin{equation}\label{Broer 2.6}
\mathbf C[\mathfrak g^*]\otimes
H^0\bigl(G/B,\mathcal L_{G/B}(\lambda)^*\bigr)
\twoheadrightarrow
H^0\bigl(
G\times^B\mathfrak u,
\pi^*\mathcal L_{G/B}(\lambda)^*
\bigr).
\end{equation}
Combining Equations \ref{restriction u to u_P} and \ref{Broer 2.6}, we
obtain the desired surjection
\[
\mathbf C[\mathfrak g^*]\otimes
H^0\bigl(G/B,\mathcal L_{G/B}(\lambda)^*\bigr)
\twoheadrightarrow
H^0\bigl(
G\times^B\mathfrak u_P,
\pi_B^*\mathcal L_{G/B}(\lambda)^*
\bigr).\qedhere
\]
\end{proof}

\section{Generators for the coordinate ring of \texorpdfstring{$T^*(\mathrm{SL}_n/[P,P])$}{T*(SLn/[P,P])}}

\begin{lem}\label{Specialization}
Let
\[
\mathcal{M}_P
:=
G\times^P
\left(\mathfrak{g}/[\mathfrak{p},\mathfrak{p}]\right)^*,
\]
and let $\mathcal{L}_{\mathcal{M}_P}(\lambda)$ denote the pullback of
$\mathcal{L}_{G/P}(\lambda)$ along the projection $\mathcal{M}_P\rightarrow G/P$. If
$\lambda\in\mathbf{X}_{G,P}^+$, then
\[
\mathbf{C}\otimes_{\mathbf{C}[(\mathfrak{l}^{\mathrm{ab}})^*]}
\Gamma\left(
\mathcal{M}_P,\mathcal{L}_{\mathcal{M}_P}(\lambda)^*
\right)
\xrightarrow{\ \sim\ }
\Gamma\left(
T^*(G/P),\mathcal{L}_{T^*(G/P)}(\lambda)^*
\right),
\]
where $\mathbf C$ is viewed as a
$\mathbf{C}[(\mathfrak{l}^{\mathrm{ab}})^*]$-module by evaluation at the origin.
\end{lem}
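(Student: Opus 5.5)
We view $\mathcal{M}_P$ as a vector bundle over $G/P$ containing $T^*(G/P)=G\times^P\mathfrak{u}_P$ as the zero fibre of a natural map to $(\mathfrak{l}^{\mathrm{ab}})^*$. Dualizing $0\to\mathfrak{p}/[\mathfrak{p},\mathfrak{p}]\to\mathfrak{g}/[\mathfrak{p},\mathfrak{p}]\to\mathfrak{g}/\mathfrak{p}\to0$ gives a $P$-equivariant exact sequence
\[
0\to(\mathfrak{g}/\mathfrak{p})^*\to(\mathfrak{g}/[\mathfrak{p},\mathfrak{p}])^*\to(\mathfrak{p}/[\mathfrak{p},\mathfrak{p}])^*\simeq(\mathfrak{l}^{\mathrm{ab}})^*\to0,
\]
with $P$ acting trivially on the quotient, since $\mathfrak{p}/[\mathfrak{p},\mathfrak{p}]\simeq\mathfrak{l}^{\mathrm{ab}}$ and $P$ acts trivially on its abelianization. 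This produces a $G$-invariant morphism $\nu:\mathcal{M}_P\to(\mathfrak{l}^{\mathrm{ab}})^*$ whose scheme-theoretic fibre over $0$ is $G\times^P(\mathfrak{g}/\mathfrak{p})^*\simeq T^*(G/P)$. Choose linear coordinates $x_1,\dots,x_{k-1}$ on $(\mathfrak{l}^{\mathrm{ab}})^*$. Their pullbacks $\nu^*x_i$ cut out $T^*(G/P)$ inside $\mathcal{M}_P$. Fibrewise these are independent linear functions on the vector space $(\mathfrak{g}/[\mathfrak{p},\mathfrak{p}])^*$, so they form a regular sequence on $\mathcal{O}_{\mathcal{M}_P}$ and on the locally free sheaf $\mathcal{F}:=\mathcal{L}_{\mathcal{M}_P}(\lambda)^*$. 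The restriction of $\mathcal{F}$ to the zero fibre is $\mathcal{L}_{T^*(G/P)}(\lambda)^*$.

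The argument is an induction along the flag of linear subspaces $W_j:=\{x_1=\dots=x_j=0\}\subset(\mathfrak{l}^{\mathrm{ab}})^*$. Set $X_j:=\nu^{-1}(W_j)$, which is a vector bundle over $G/P$ of the form $G\times^P N_j$ with $N_j\supset(\mathfrak{g}/\mathfrak{p})^*$. The Koszul short exact sequence is
\[
0\to\mathcal{F}|_{X_j}\xrightarrow{\cdot x_{j+1}}\mathcal{F}|_{X_j}\to\mathcal{F}|_{X_{j+1}}\to0.
\]
In its long exact sequence, surjectivity on $\Gamma$ requires $H^1(X_j,\mathcal{F}|_{X_j})=0$. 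For higher steps of the induction one also wants $H^i(X_j,\mathcal{F}|_{X_j})=0$ for all $i\geq1$; this vanishing on $X_{j+1}$ follows from the same sequence once it holds on $X_j$. Granting this vanishing on all of $\mathcal{M}_P=X_0$, we get $\Gamma(X_{j+1},\mathcal{F})\simeq\Gamma(X_j,\mathcal{F})/x_{j+1}\Gamma(X_j,\mathcal{F})$. Composing these isomorphisms yields $\Gamma(\mathcal{M}_P,\mathcal{F})/(x_1,\dots,x_{k-1})\simeq\Gamma(T^*(G/P),\mathcal{F})$, which is exactly the claimed isomorphism $\mathbf{C}\otimes_{\mathbf{C}[(\mathfrak{l}^{\mathrm{ab}})^*]}\Gamma(\mathcal{M}_P,\mathcal{F})\xrightarrow{\sim}\Gamma(T^*(G/P),\mathcal{F})$. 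The map here is just restriction of sections, so it is the natural one.

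The key input, and the main obstacle, is the vanishing $H^i(\mathcal{M}_P,\mathcal{L}_{\mathcal{M}_P}(\lambda)^*)=0$ for $i\geq1$. Since $\mathcal{M}_P\to G/P$ is affine, this cohomology equals $\bigoplus_j H^i\bigl(G/P,\mathcal{L}_{G/P}(S^j(\mathfrak{g}/[\mathfrak{p},\mathfrak{p}])\otimes\mathbf{C}_{-\lambda})\bigr)$. We filter $S^j$ using the exact sequence above, noting that $\mathfrak{l}^{\mathrm{ab}}$ is a trivial $P$-module. The associated graded pieces are $S^a(\mathfrak{u}_P^*)\otimes S^b(\mathfrak{l}^{\mathrm{ab}})$, and $S^b(\mathfrak{l}^{\mathrm{ab}})$ is a trivial tensor factor. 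So each graded piece contributes copies of $H^i(G/P,\mathcal{L}_{G/P}(S^a(\mathfrak{u}_P^*)\otimes\mathbf{C}_{-\lambda}))$, which vanish for $i\geq1$ by Theorem \ref{vanishing of higher cohomology} applied with $A=\mathbf{C}_\lambda$ (valid since $\lambda\in\mathbf{X}_{G,P}^+$ makes $V_L(\lambda)=\mathbf{C}_\lambda$), after splitting by degree as in the proof of Lemma \ref{identify with T(G/P)}. A finite filtration with acyclic graded pieces is acyclic, so the higher cohomology on $\mathcal{M}_P$ vanishes.

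The only point needing care is that this filtration is by $P$-submodules. It is, because $(\mathfrak{g}/\mathfrak{p})^*\subset(\mathfrak{g}/[\mathfrak{p},\mathfrak{p}])^*$ is a $P$-submodule and hence induces a $P$-stable filtration on symmetric powers. The same filtration argument applies verbatim to each intermediate $X_j$, giving the vanishing needed at every step of the induction.
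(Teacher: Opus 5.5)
Your proposal is correct and is essentially the paper's argument. The paper takes the full Koszul resolution of $\operatorname{Sym}(\mathfrak{g}/\mathfrak{p})$ by $\operatorname{Sym}(\mathfrak{g}/[\mathfrak{p},\mathfrak{p}])\otimes\bigwedge^j\mathfrak{l}^{\mathrm{ab}}$ and shows it is $\Gamma$-acyclic using the same filtration with graded pieces $\operatorname{Sym}^j(\mathfrak{l}^{\mathrm{ab}})\otimes\operatorname{Sym}^{m-j}(\mathfrak{g}/\mathfrak{p})$; you unwind that complex into successive hypersurface sequences one coordinate at a time, and your vanishing input (Theorem \ref{vanishing of higher cohomology} with $V_L(\lambda)=\mathbf{C}_\lambda$) is the same as the paper's appeal to Broer for $\lambda\in\mathbf{X}_{G,P}^+$.
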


\begin{proof}

Consider the short exact sequence of $P$-modules
\[
0\longrightarrow \mathfrak{l}^{\mathrm{ab}}
\longrightarrow \mathfrak{g}/[\mathfrak{p},\mathfrak{p}]
\longrightarrow \mathfrak{g}/\mathfrak{p}
\longrightarrow 0.
\]
It induces a $P$-equivariant Koszul resolution
\[
\begin{aligned}
0\longrightarrow&
\operatorname{Sym}\bigl(\mathfrak{g}/[\mathfrak{p},\mathfrak{p}]\bigr)
\otimes\bigwedge^r\mathfrak{l}^{\mathrm{ab}}
\longrightarrow\cdots\\
\longrightarrow&
\operatorname{Sym}\bigl(\mathfrak{g}/[\mathfrak{p},\mathfrak{p}]\bigr)
\otimes\mathfrak{l}^{\mathrm{ab}}
\longrightarrow
\operatorname{Sym}\bigl(\mathfrak{g}/[\mathfrak{p},\mathfrak{p}]\bigr)
\longrightarrow
\operatorname{Sym}(\mathfrak{g}/\mathfrak{p})
\longrightarrow0,
\end{aligned}
\]
where $r=\dim\mathfrak{l}^{\mathrm{ab}}$ and the $j^{\text{th}}$ term is $\operatorname{Sym}
\bigl(\mathfrak g/[\mathfrak p,\mathfrak p]\bigr)(-j)
\otimes\bigwedge^j\mathfrak l^{\mathrm{ab}}$, where \((-j)\) denotes the internal grading shift. In particular, the degree-\(m\) component is $\operatorname{Sym}^{m-j}\bigl(\mathfrak g/[\mathfrak p,\mathfrak p]\bigr)\otimes\bigwedge^j\mathfrak l^{\mathrm{ab}}.$

Tensoring with \(\mathbf C_{-\lambda}\) and applying
\(G\times^P(-)\) degree by degree gives exact complexes of
finite-rank vector bundles on \(G/P\). Since
$\mathfrak{l}^{\mathrm{ab}}$ is a trivial $P$-module, every term of
this complex is a finite direct sum, up to graded shifts, of
\begin{equation}\label{koszul terms}
G\times^P\left(
\operatorname{Sym}\bigl(\mathfrak{g}/[\mathfrak{p},\mathfrak{p}]\bigr)
\otimes\mathbf C_{-\lambda}
\right).
\end{equation}
By Broer, the higher cohomology groups of $G\times^P\left(
\operatorname{Sym}^m(\mathfrak{g}/\mathfrak{p})
\otimes\mathbf C_{-\lambda}
\right)$ vanish for every $m\geq0$. The short exact sequence of $P$-modules above gives
$\operatorname{Sym}^m(\mathfrak{g}/[\mathfrak{p},\mathfrak{p}])$
a finite filtration whose associated graded induces the isomorphism of $\mathfrak{l}$-modules
\[
\text{gr}\big(\operatorname{Sym}^m(\mathfrak{g}/[\mathfrak{p},\mathfrak{p}])\big)\simeq\bigoplus_{j=0}^m
\operatorname{Sym}^j(\mathfrak l^{\mathrm{ab}})
\otimes
\operatorname{Sym}^{m-j}(\mathfrak g/\mathfrak p).
\]

Since $\mathfrak l^{\mathrm{ab}}$ is a trivial $P$-module, each of the graded pieces of \ref{koszul terms} is acyclic. Hence the Koszul resolution is a $\Gamma$-acyclic
resolution on $G/P$. In particular, we obtain an exact sequence

\begin{align*}
    &H^0\left(G/P,G\times^P\left(
    \operatorname{Sym}^{m-1}(\mathfrak{g}/[\mathfrak{p},\mathfrak{p}])
    \otimes\mathbf C_{-\lambda}\right)\right)
    \otimes\mathfrak{l}^{\mathrm{ab}}
    \xrightarrow{\delta_1}
    H^0\left(G/P,G\times^P\left(
    \operatorname{Sym}^{m}(\mathfrak{g}/[\mathfrak{p},\mathfrak{p}])
    \otimes\mathbf C_{-\lambda}\right)\right)
    \xrightarrow{\delta_0}\nonumber\\
    &H^0\left(G/P,G\times^P\left(
    \operatorname{Sym}^{m}(\mathfrak{g}/\mathfrak{p})
    \otimes\mathbf C_{-\lambda}\right)\right)
    \longrightarrow 0.
\end{align*}

The second and third
terms are the degree-$m$ components of
\[
\Gamma\left(
\mathcal M_P,\mathcal L_{\mathcal M_P}(\lambda)^*
\right)
\quad\text{and}\quad
\Gamma\left(
T^*(G/P),\mathcal L_{T^*(G/P)}(\lambda)^*
\right),
\]
respectively. Since $\delta_1$ is multiplication, its image is the
degree-$m$ component of
\[
\mathfrak l^{\mathrm{ab}}\cdot
\Gamma\left(
\mathcal M_P,\mathcal L_{\mathcal M_P}(\lambda)^*
\right).
\]
Exactness implies that $\delta_0$ is surjective and that
$\operatorname{Ker}(\delta_0)=\operatorname{Im}(\delta_1)$. Summing
over $m$ therefore gives
\[
\Gamma\left(
T^*(G/P),\mathcal L_{T^*(G/P)}(\lambda)^*
\right)
\cong
\frac{
\Gamma\left(
\mathcal M_P,\mathcal L_{\mathcal M_P}(\lambda)^*
\right)
}{
\mathfrak l^{\mathrm{ab}}\cdot
\Gamma\left(
\mathcal M_P,\mathcal L_{\mathcal M_P}(\lambda)^*
\right)
}.
\]
This proves the desired result.
\end{proof}

\begin{thm}\label{generators of R}
The algebra $\mathbf{C}[T^*(SL_n/[P,P])]$ is generated by the image of the comoment map $\mu_{\mathfrak{sl}_n\times(\mathfrak{l}^{\mathrm{ab}})^*}: \mathbf{C}[\mathfrak{sl}_n^*\times(\mathfrak{l}^{\text{ab}})^*]\rightarrow\mathbf{C}[T^*(SL_n/[P,P])]$ and by $\Phi_w^*(\mathbf{C}[SL_n/[P^w,P^w])$, for each $w\in S_k$.
\end{thm}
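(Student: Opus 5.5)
Write $R:=\mathbf C[T^*(SL_n/[P,P])]$ and let $R'\subset R$ be the subalgebra generated by the comoment image and the $\Phi_w^*(\mathbf C[SL_n/[P^w,P^w]])$. The plan is to decompose $R$ by the right $L^{\mathrm{ab}}=P/[P,P]$-action into isotypic pieces, and then use the Specialization Lemma \ref{Specialization} together with Theorem \ref{generation of zeroeth cohomology} and a graded Nakayama argument over $\mathbf C[(\mathfrak l^{\mathrm{ab}})^*]$. Since $T^*(G/[P,P])\simeq G\times^{[P,P]}(\mathfrak g/[\mathfrak p,\mathfrak p])^*$ and the quotient by the free $L^{\mathrm{ab}}$-action is $\mathcal M_P=G\times^P(\mathfrak g/[\mathfrak p,\mathfrak p])^*$, we have
\[
R\simeq\bigoplus_{\lambda\in\mathbf X_{G,P}}\Gamma\bigl(\mathcal M_P,\mathcal L_{\mathcal M_P}(\lambda)^*\bigr).
\]
Each summand is a graded $\mathbf C[\mathfrak g^*]\otimes\mathbf C[(\mathfrak l^{\mathrm{ab}})^*]$-module, the grading being the fiber degree. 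Every $\Phi_w$ is $SL_n$-equivariant and twisted $L^{\mathrm{ab}}$-equivariant by $w$. It is also compatible with the moment maps up to the $w$-twist on $(\mathfrak l^{\mathrm{ab}})^*$, so $R'$ is a graded $\mathbf X_{G,P}$-homogeneous subalgebra, and it suffices to show that $R'$ contains every weight piece.

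\emph{Dominant weights.} First take $\lambda\in\mathbf X_{G,P}^+$. Lemma \ref{Specialization}, combined with Theorem \ref{generation of zeroeth cohomology} and the Borel--Weil identification $H^0(G/P,\mathcal L_{G/P}(\lambda)^*)=V_G(\lambda)^*$, shows that $\Gamma(\mathcal M_P,\mathcal L_{\mathcal M_P}(\lambda)^*)/\mathfrak l^{\mathrm{ab}}\cdot(\cdots)$ is generated over $\mathbf C[\mathfrak g^*]$ by its degree-zero part $V_G(\lambda)^*$. The module is nonnegatively graded and $\mathfrak l^{\mathrm{ab}}$ sits in positive degree, so graded Nakayama lifts this: $\Gamma(\mathcal M_P,\mathcal L_{\mathcal M_P}(\lambda)^*)$ is generated over $\mathbf C[\mathfrak g^*\times(\mathfrak l^{\mathrm{ab}})^*]$ by its degree-zero part. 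That degree-zero part is exactly the $\lambda$-piece of $\mathbf C[SL_n/[P,P]]$, which is $\Phi_e^*$ of the coordinate ring. Hence every dominant piece lies in $R'$.

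\emph{Arbitrary weights.} The remaining step, and the one I expect to be the main obstacle, is a weight $\lambda\in\mathbf X_{G,P}$ that is not dominant. Here I will use the intertwiners. Writing $\lambda=\sum c_i\varpi_{d_i}$, the Weyl-type group $S_k$ permuting the blocks acts on $\mathbf X_{G,P}$, viewed as characters of $L^{\mathrm{ab}}$, and every orbit should meet the dominant chamber of $\mathbf X_{G,P^w}$ for a suitable $w$, after possibly replacing $P$ by $P^w$. The twisted equivariance shows that $\Phi_w^*$ maps the $w\lambda$-piece of $R^{w}:=\mathbf C[T^*(SL_n/[P^w,P^w])]$ isomorphically onto the $\lambda$-piece of $R$. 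Applying the dominant case to $P^w$ then shows that this piece is generated over $\mathbf C[\mathfrak g^*\times(\mathfrak l^{\mathrm{ab}})^*]$ by $\Phi_w^*$ of the degree-zero part of $R^{w}$. The comoment maps must be matched under $\Phi_w$, with the $(\mathfrak l^{\mathrm{ab}})^*$-coordinate twisted by $w$ and the $\mathfrak g^*$-coordinate preserved. This is where I would check the results of \cite{GrantcharovSlipper2026} carefully.

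The delicate point is that one $w$ does not suffice for all $\lambda$. I need the statement that for every $\lambda\in\mathbf X_{G,P}$ there is some $w\in S_k$ with $w\lambda\in\mathbf X_{G,P^w}^+$. My first attempt will be to sort the "block slopes" determined by $\lambda$, namely the differences of its values on consecutive blocks normalized by the block sizes, using the permutation that reorders the blocks. Then $w\lambda$ has nonnegative coefficients on each $\varpi_{d_i^w}$, i.e. it is $G$-dominant. If that sorting fails, I would fall back on decomposing $\lambda$ as a sum of pieces that are each dominant for some $P^w$ and use multiplicativity inside $R'$. That fallback suffices only if the product map onto the $\lambda$-piece is surjective, which would again need Theorem \ref{generation of zeroeth cohomology}.
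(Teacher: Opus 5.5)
Your proposal is correct and follows essentially the same route as the paper. It decomposes $R$ into $L^{\mathrm{ab}}$-weight pieces and handles dominant weights by graded Nakayama over $\mathbf C[(\mathfrak l^{\mathrm{ab}})^*]$, combined with Lemma~\ref{Specialization} and Theorem~\ref{generation of zeroeth cohomology} applied to each $P^w$. Arbitrary weights are then reached through the isomorphisms $\Phi_w^*\colon (R^w)_{w\lambda}\xrightarrow{\sim}R_\lambda$ and the identity $\Phi_w^*(\operatorname{im}\mu_{P^w}^*)=\operatorname{im}\mu_P^*$; the paper also takes the latter from \cite{GrantcharovSlipper2026} without further proof.

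The step you flagged as delicate is immediate and needs no fallback. Write $\lambda|_T=(\lambda_1^{s_1},\dots,\lambda_k^{s_k})$ and choose $w$ with $\lambda_{w^{-1}(1)}\ge\cdots\ge\lambda_{w^{-1}(k)}$. This sorts the block values themselves, with no differences or normalization by block sizes, and gives $w\lambda\in\mathbf X_{G,P^w}^+$.
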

\begin{proof}
Write $G=SL_n$. Consider the variety $T^*(G/[P,P]) \cong G \times^{[P,P]} \left(\mathfrak{g}/[\mathfrak{p},\mathfrak{p}]\right)^*$. We may identify $\mathfrak{g}/[\mathfrak{p},\mathfrak{p}]^* = \mathfrak{z}(\mathfrak{l})\oplus\mathfrak{u}_P$. Letting $L$ denote the Levi of $P$, we observe that the map $G \times^{[P,P]} \left(\mathfrak{g}/[\mathfrak{p},\mathfrak{p}]\right)^* \to G \times^{P} \left(\mathfrak{g}/[\mathfrak{p},\mathfrak{p}]\right)^*$ is an $L^{\textrm{ab}}$-torsor. Let $\mathcal{M}_P: = G \times^{P} \left(\mathfrak{g}/[\mathfrak{p},\mathfrak{p}]\right)^*$. 

Let $A:=\mathbf{C}[G/[P,P]]$ and $R:=\mathbf{C}[T^*(G/[P,P])]$.
The Peter-Weyl theorem for $\mathbf{C}[G]$, the Borel-Weil-Bott theorem, and the projection $T^*(G/[P,P])\rightarrow\mathcal{M}_P$ being an $L^{\mathrm{ab}}$-torsor altogether imply

\begin{align}\label{Def of A,R}
A=\mathbf{C}[G/[P,P]]&\cong\bigoplus_{\lambda\in\mathbf{X}_{G,P}^+}\Gamma(G/P,\mathcal{L}_{G/P}(\lambda)^*)\cong\bigoplus_{\lambda\in\mathbf{X}_{G,P}^+}V(\lambda)^*\\
R=\mathbf{C}[T^*(G/[P,P])] &\cong \bigoplus_{\lambda \in \mathbf{X}_{G,P}}\Gamma\left(\mathcal{M}_P, \mathcal{L}_{\mathcal{M}_P} (\lambda)^*\right).
\end{align}
where both isomorphisms are as $G\times L^{\text{ab}}$-modules. 

Given $\lambda\in\mathbf{X}_{G,P}$ and $w\in S_k$, we define $w\lambda$ by permuting the blocks. If we let $s_i:=d_i-d_{i-1}$ denote the block sizes, we may write
\[
\lambda=(\lambda_1^{s_1},\dots,\lambda_k^{s_k}),
\qquad
w\lambda=
(\lambda_{w^{-1}(1)}^{s_{w^{-1}(1)}},
 \dots,
 \lambda_{w^{-1}(k)}^{s_{w^{-1}(k)}}).
\]
where $\lambda_i^{s_i}:=(\lambda_i,\dots,\lambda_i)$ is a $s_i$-tuple. Next, define
$$R^w:= \mathbf{C}[T^*(SL_n/[P^w,P^w])],\quad A^w:=\mathbf{C}[SL_n/[P^w,P^w]].$$
\noindent By \cite{GrantcharovSlipper2026}, for each \(w\in S_k\)
there is an intertwiner
\[
\Phi_w^*:R^w\longrightarrow R
\]
that is \(w\)-twisted \(L^{\mathrm{ab}}\)-equivariant. In particular,
for every \(\lambda\in\mathbf X_{G,P}\), it induces an isomorphism
\begin{equation}\label{eq:Phi-w-weight}
\Phi_w^*:(R^w)_{w\lambda}
\xrightarrow{\sim}
R_\lambda.
\end{equation}

Now, denote the moment map \[
\mu_P:
T^*(\mathrm{SL}_n/[P,P])
\longrightarrow
\mathfrak{sl}_n^*\times(\mathfrak l^{\mathrm{ab}})^*,
\]
and let $\mu_P^*$ denote the induced comoment map on coordinate algebras. There is thus a standard map induced by multiplying the comoment map $\mu_P^*$ with the standard inclusion $A^w\hookrightarrow R^w$:
\begin{equation}\label{M_w}
M_{w,\lambda}:=\mu_{P^w}^*\otimes\operatorname{Id}_{(A^w)_\lambda},:
\mathbf{C}[\mathfrak{g}^*]
\otimes
\mathbf{C}[(\mathfrak{l}^{\mathrm{ab}})^*]
\otimes
(A^w)_\lambda
\longrightarrow
(R^w)_\lambda.
\end{equation}
We wish to show $M_{w,\lambda}$ is surjective for every dominant
$\lambda\in\mathbf{X}_{G,P^w}^+$. This suffices: for any
$\eta\in\mathbf{X}_{G,P}$, there exists $w\in S_k$ such that
$w\eta\in\mathbf{X}_{G,P^w}^+$. Moreover,
\begin{equation}\label{Phi_w_moment}
\Phi_w^*\bigl(\operatorname{im}(\mu_{P^w}^*)\bigr)
=
\operatorname{im}(\mu_P^*).
\end{equation}
Thus, after proving that Equation~\ref{M_w} is surjective for
dominant $\lambda\in\mathbf{X}_{G,P^w}^+$, we may apply
$\Phi_w^*$ to $M_{w,w\eta}$ to obtain the required generation
statement for every $\eta\in\mathbf{X}_{G,P}$.

By the graded Nakayama lemma, it suffices to prove that
$M_{w,\lambda}$ is surjective after tensoring the morphism \ref{M_w} with $(-)\otimes_{\mathbf{C}[(\mathfrak{l}^{\mathrm{ab}})^*]}\mathbf{C}.$
By Lemma~\ref{Specialization}, we may identify the resulting map with
\begin{equation}\label{surjectivity on w,lambda}
\overline{M}_{w,\lambda}:
\mathbf{C}[\mathfrak{g}^*]
\otimes
\Gamma\left(
G/P^w,\mathcal{L}_{G/P^w}(\lambda)^*
\right)
\longrightarrow
\Gamma\left(
T^*(G/P^w),
\mathcal{L}_{T^*(G/P^w)}(\lambda)^*
\right).
\end{equation}
This is surjective by
Theorem~\ref{generation of zeroeth cohomology}, thus completing the proof.
\end{proof}

\appendix
\section{Terminal singularities of the affinization of \texorpdfstring{$T^*(\mathrm{SL}_n/[P, P])$}{T*(SLn/[P, P])} (Joint with Tom Gannon)}
In this appendix, we establish the following result:

\begin{thm}\label{terminal singularities}
    The variety $\overline{T^*(SL_n/[P,P])}^{\text{aff}}$ has terminal singularities.
\end{thm}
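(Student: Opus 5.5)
The plan is to use the criterion of Namikawa: a variety with symplectic singularities has terminal singularities if and only if its singular locus has codimension at least four. By \cite{FuLiu2025}, $X:=\overline{T^*(SL_n/[P,P])}^{\text{aff}}$ has symplectic singularities. It therefore suffices to show that $\operatorname{codim}_X X^{\mathrm{sing}}\geq 4$. Equivalently, we must exhibit a smooth open subset $U\subset X$ whose complement has codimension at least four.

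To produce $U$, we use the Braverman--Kazhdan intertwiners. The quasi-affine variety $T^*(SL_n/[P,P])$ embeds as an open subset of $X$, since the affinization map is an open embedding: $\mathbf{C}[T^*(SL_n/[P,P])]$ is finitely generated by Theorem \ref{generators of R}, and the map is birational. We then set
\[
U:=\bigcup_{w\in S_k}\Phi_w^{-1}\bigl(T^*(SL_n/[P^w,P^w])\bigr)\subset X,
\]
which is smooth because each piece is isomorphic to a cotangent bundle. The generating set of Theorem \ref{generators of R} is what lets us control the complement $X\setminus U$. A point outside $U$ is a point where, for every $w$, some function in $\Phi_w^*(\mathbf{C}[SL_n/[P^w,P^w]])$ fails to detect it as lying in the open $G$-orbit part. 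Concretely, for each $w$ the complement of $\Phi_w^{-1}(T^*(SL_n/[P^w,P^w]))$ lies over the boundary of $SL_n/[P^w,P^w]$ inside its affine closure $\overline{SL_n/[P^w,P^w]}^{\text{aff}}$.

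Next we stratify via the moment map $X\to\mathfrak{sl}_n^*\times(\mathfrak l^{\mathrm{ab}})^*$. Over the generic locus of $(\mathfrak l^{\mathrm{ab}})^*$, namely where the central character is regular in the sense that the walls $\lambda_i=\lambda_j$ are avoided, the fiber of $T^*(G/[P,P])$ is already affine: it is a closed orbit $G/L$ twisted by translation. So $X\setminus U$ lies over the union of the hyperplanes $\lambda_i=\lambda_j$. Over a generic point of a single hyperplane, the situation reduces, via the Levi of block size $s_i+s_j$, to a rank-one computation for a two-block parabolic in $SL_{s_i+s_j}$. There, the intertwiners $\Phi_{s_{ij}}$ cover all but a locus of codimension at least $2$ inside the fiber, giving total codimension at least $3$. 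One then needs a further improvement. Using the symplectic structure, the singular locus has even codimension, so codimension at least $3$ forces codimension at least $4$.

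The main obstacle will be this last step: proving rigorously that, over a generic point of each wall, the complement of the union of the $w$-translated cotangent bundles has codimension at least $2$ in the fiber. Equivalently, we need codimension at least $3$ overall before invoking the even-codimension parity of symplectic leaves. I would first try to reduce to the two-block case $SL_m$ with $P$ maximal. There I would compute the affine closure of $SL_m/[P,P]$ explicitly, where the boundary is given by the vanishing of the top minors, and check that $\Phi_s$ maps the bad locus of one chart into the good locus of the other, away from a codimension-$2$ set. This mirrors the argument of Jia and Gannon for $P=B$.
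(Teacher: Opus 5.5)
Your outer framework matches the paper's. Fu--Liu gives symplectic singularities, and Namikawa's criterion reduces terminality to $\operatorname{codim}\geq 4$ of the singular locus. Kaledin's even-dimensionality of symplectic leaves upgrades codimension $3$ to $4$, and $U=\bigcup_{w}\Phi_w^{-1}\bigl(T^*(SL_n/[P^w,P^w])\bigr)$ is smooth and open.

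The gap is that the entire content of the theorem is the bound $\operatorname{codim}_X(X\setminus U)\geq 3$, and you leave exactly this as ``the main obstacle''. The proposal states it as a goal rather than proving it. The route you sketch toward it also has unsupported steps:
(a) Reducing, over a generic point of a wall $\lambda_i=\lambda_j$, to a two-block parabolic of $SL_{s_i+s_j}$ needs a local (slice or parabolic-induction) description of the affinization $X$ near such fibres, in terms of the analogous variety for the Levi. You do not supply one.
(b) Turning ``codimension $2$ in the fibre'' into ``codimension $3$ in $X$'' needs equidimensionality of the fibres of $X\to(\mathfrak l^{\mathrm{ab}})^*$. You also ignore fibres over intersections of two walls: by Krull their preimages have codimension at most $2$ in $X$, so you must show $X\setminus U$ contains no component of them.
(c) Even for two blocks, ``$\Phi_s$ maps the bad locus of one chart into the good locus of the other away from codimension $2$'' restates the claim rather than proving it.
(Minor points: the affinization map is an open embedding because $T^*(SL_n/[P,P])$ is quasi-affine, not because of finite generation plus birationality. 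The regular fibre is $G/[L,L]$, an $L^{\mathrm{ab}}$-torsor over $G\cdot\xi\simeq G/L$.)

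The paper avoids the moment-map stratification altogether. It stratifies $X$ by which of the spaces $A_{w,i}=\Phi_w^{-1}\bigl((A^w)_{\varpi_{d_i^w}}\bigr)$ vanish; by Theorem~\ref{generators of R} the $L^{\mathrm{ab}}$-stabilizer is constant on each stratum (Lemma~\ref{constant stabilizer on strata}). Two ideas are missing from your sketch.

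First, every point $p$ with \emph{finite} $L^{\mathrm{ab}}$-stabilizer lies in $U$. The weights of functions nonvanishing at $p$ span $\mathbf X_{G,P}\otimes\mathbf Q$, so some product $f$ of them has weight $\mu$ with $w\mu$ regular dominant for $P^w$, for some $w\in S_k$. Surjectivity of $M_{w,w\mu}$ (Theorem~\ref{generation of zeroeth cohomology}, Lemma~\ref{Specialization} and graded Nakayama) then puts $\Phi_w(f)$ in $(A^w)_{\varpi_{d_i^w}}R^w$ for every $i$. Hence $\overline{\pi}_w(\Phi_w(p))$ lies in the open orbit by Lemma~\ref{complement ideal}.

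Second, each stratum with positive-dimensional stabilizer has codimension $\geq 3$, via three explicit dimension drops:
\begin{enumerate}
\item Quotient by some $a_1\in A_{\varpi_{d_i}}$ vanishing on the stratum. It is prime because $R$ is a UFD by Fu--Liu and such elements are irreducible for grading reasons.
\item Pass to invariants of a one-parameter subgroup $\gamma$ of the stabilizer. The stratum lies in the $\gamma$-fixed locus, and an element $a_2\in A_{\varpi_{d_i}}$ independent of $a_1$ has nonzero $\gamma$-degree, which forces a drop.
\item Quotient by the nonzero $\gamma$-invariant product $\bar a_2\bar b$, where $b\in A_{w_0,k-i}$ has weight $-\varpi_{d_i}$.
\end{enumerate}
This treats all strata uniformly, with no reduction to two blocks and no flatness input.
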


When $P = B$, this result is all but proved in \cite{Jia2025} using the description of $\overline{T^*(SL_n/[B,B])}^{\text{aff}}$ as an $SL$-quiver variety provided in \cite{DancerKirawanSwann2013}. However, this strategy cannot be generalized for general parabolic subgroups since $\overline{T^*(SL_n/[P,P])}^{\text{aff}}$ is \textit{not} the $SL$-quiver variety associated to the analogous quiver, see \cite[Section 5.2]{GrantcharovSlipper2026}.

Instead, our proof of Theorem \ref{terminal singularities} follows a similar strategy to that used in \cite{Gannon2024}, which shows that $\overline{T^*(G/U)}^{\text{aff}}$ has terminal symplectic singularities for an arbitrary reductive group $G$. However, unlike the arguments of \cite{Gannon2024}, our argument will use the fact that $\overline{T^*(G/[P, P])}^{\text{aff}}$ has symplectic singularities for an arbitrary parabolic subgroup $P$ (originally proved in \cite{FuLiu2025}) to derive that $\overline{T^*(G/[P, P])}^{\text{aff}}$ has terminal singularities.

To prove Theorem \ref{terminal singularities}, let us fix some notation. Let $G=\mathrm{SL}_n$, and let $P$ be the standard parabolic subgroup corresponding to the partition $0=d_0<d_1<\dots<d_k=n$, so that the associated Levi subgroup $L$ has block sizes $s_i:=d_i-d_{i-1},1\leq i\leq k$. Denote the algebras $A:=\mathbf{C}[G/[P,P]]$ and $R:=\C[T^*(G/[P,P])]$ as in \ref{Def of A,R}. Given $w\in S_k$, let $P^w$ be the associate standard parabolic whose Levi subgroup is $L^w:=wLw^{-1}$. Denote the corresponding partition $0=d_0^w<d_1^w<\dots<d_k^w=n$ of $P^w$. Denote the corresponding algebras 
\begin{align*}
    A^w:=\C[SL_n/[P^w,P^w]]&\text{ and }R^w:=\C[T^*(SL_n/[P^w,P^w])]
\end{align*}
Denote the standard projection 
$$\overline{\pi}_w:\overline{T^*(G/[P^w,P^w])}^{\text{aff}}\rightarrow\overline{G/[P^w,P^w]}^{\text{aff}}.$$

Recall there is a right $L^{\text{ab}}$-action which decomposes the algebras $A^w$ and $R^w$ into weight spaces for every $w\in S_k$. For $\lambda\in\mathbf{X}_{G,P}^+$, set
\[
A_\lambda:=\Gamma(G/P,\mathcal{L}_{G/P}(\lambda)^*)\simeq V_G(\lambda)^*.
\]
We use the analogous notation for the weight spaces of $A^w$, and let $R_\lambda$ denote the $\lambda$-weight space of $R$ (See \ref{Def of A,R}). By \cite{GrantcharovSlipper2026}, there are $G$-equivariant isomorphisms for each $\lambda\in\mathbf{X}_{G,P}$:
\[
\Phi_{w,\lambda}:R_\lambda\rightarrow (R^w)_{w\lambda}.
\]
Let $\varpi_{d_i}$, $1\leq i<k$, label the fundamental weights of $\mathbf{X}_{G,P}^+$. Thus,
\[
A_{\varpi_{d_i}}\simeq V_G(\varpi_{d_i})^*
\simeq\left(\wedge^{d_i}\mathbf{C}^n\right)^*.
\]
By the Peter-Weyl theorem, the spaces $A_{\varpi_{d_i}}$, for $1\leq i<k$, generate the algebra $A$ via Cartan multiplication.

Given a global function $f$ on a variety $X$, we denote $V(f)$ the vanishing locus of $f$ and $D(f):=X\setminus V(f)$ the principal open subset of $X$. For a subset $F$ of functions on $X$, denote $D(F)=\cup_{f\in F}D(f)$ and $V(F)=\cap_{f\in F}V(f)$.

We proceed by proving a series of lemmas.
  
 \begin{lem}\label{complement ideal}
     We may identify 
     $$SL_n/[P,P]=\bigcap_{i=1}^{k-1}D(A_{\varpi_{d_i}})\subset\overline{SL_n/[P,P]}^{\text{aff}}$$
     and the complement is the vanishing of the $\delta_P:=\varpi_{d_1}+\dots+\varpi_{d_{k-1}}$-weight space of $A$:
     $$\overline{SL_n/[P,P]}^{\text{aff}}\setminus (SL_n/[P,P])=V(A_{\delta_P}).$$
 \end{lem}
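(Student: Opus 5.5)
The plan is to make everything explicit with Plücker coordinates. The action of $SL_n$ on $\prod_{i=1}^{k-1}\wedge^{d_i}\mathbf{C}^n$, via $g\mapsto (g e_1\wedge\cdots\wedge e_{d_i})_i$, realizes $G/[P,P]$ as the orbit of $v_0:=(e_1\wedge\cdots\wedge e_{d_i})_i$. The stabilizer of $v_0$ is exactly $[P,P]$: an element fixing every $e_1\wedge\cdots\wedge e_{d_i}$ preserves each $F^0_{d_i}$, so it lies in $P$, and it acts with determinant $1$ on each block, so it lies in $[P,P]$.

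Since $A=\bigoplus_{\lambda\in\mathbf X_{G,P}^+}V_G(\lambda)^*$ is generated by the $A_{\varpi_{d_i}}\simeq(\wedge^{d_i}\mathbf{C}^n)^*$, the affinization $\overline{G/[P,P]}^{\mathrm{aff}}=\Spec A$ is the normal closure of this orbit. The natural map $\Spec A\to\prod_i\wedge^{d_i}\mathbf{C}^n$ is a closed embedding, because its pullback on coordinate rings is the surjective Cartan-multiplication map. So we identify $\Spec A$ with the orbit closure $X$.

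Under this identification, $A_{\varpi_{d_i}}$ is the space of linear coordinates on the $i$-th factor. Hence $\bigcap_i D(A_{\varpi_{d_i}})$ is the set of points of $X$ all of whose components $x_i$ are nonzero. For the first claim I would show that this locus equals the orbit $G v_0$. One inclusion is clear.

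For the converse, I use that $X$ lies in the closed cone of tuples $(x_i)$ in which each $x_i$ is decomposable (possibly zero) and the associated subspaces are nested whenever the $x_i$ are nonzero. These are closed incidence conditions satisfied on the orbit: the Plücker relations, together with the conditions $\iota_\xi x_i\wedge x_j=0$-type incidence relations for $d_i<d_j$. So a point with all $x_i\neq0$ has the form $x_i=c_i\, f_1\wedge\cdots\wedge f_{d_i}$ for some flag adapted to the $d_i$ and scalars $c_i\neq0$.

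Applying a suitable $g\in SL_n$, we may assume the flag is the standard one, so $x_i=c_i\, e_1\wedge\cdots\wedge e_{d_i}$. The diagonal element $\mathrm{diag}(t_1 I_{s_1},\dots,t_k I_{s_k})$ of $L$ with $\prod_j t_j^{s_j}=1$ scales the $i$-th coordinate by $t_1^{s_1}\cdots t_i^{s_i}$. Setting $t_1^{s_1}\cdots t_i^{s_i}=c_i$ for $i<k$ and letting the condition $\prod_j t_j^{s_j}=1$ fix the last factor $t_k^{s_k}$, this system is solvable over $\mathbf{C}$. So every such point lies in $Gv_0$.

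For the second claim, note $A_{\delta_P}=A_{\varpi_{d_1}}\cdots A_{\varpi_{d_{k-1}}}$ by Cartan multiplication, since $V(\delta_P)^*$ is spanned by products of highest-weight-type products and equals the image of the multiplication map. For a point $x$, some element of $A_{\delta_P}$ is nonzero at $x$ if and only if each $A_{\varpi_{d_i}}$ has an element nonzero at $x$. This gives $D(A_{\delta_P})=\bigcap_i D(A_{\varpi_{d_i}})$, and taking complements yields the statement.

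The main obstacle is the converse inclusion in the first claim: that the locus where every $x_i\neq 0$ lies inside a single $G$-orbit. The key inputs there are the closedness of the decomposability and nesting conditions, and the torus computation.
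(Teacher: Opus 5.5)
Your proof is correct, but for the first claim it takes a genuinely different route from the paper.

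The second claim is handled exactly as in the paper: Cartan multiplication $A_{\varpi_{d_1}}\otimes\cdots\otimes A_{\varpi_{d_{k-1}}}\twoheadrightarrow A_{\delta_P}$ gives $D(A_{\delta_P})=\bigcap_i D(A_{\varpi_{d_i}})$. Your stated reason for that surjectivity is garbled. The clean reason is that multiplication is a nonzero $G$-map into the irreducible $V_G(\delta_P)^*$, because $A$ is a domain. In your own setup it is even more immediate: the surjection $\mathrm{Sym}\bigl(\bigoplus_i(\wedge^{d_i}\C^n)^*\bigr)\to A$ respects the multigrading.

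\textbf{The paper's route for the first claim.} It quotes the quiver-type presentations
$\overline{SL_n/[P,P]}^{\mathrm{aff}}\simeq\bigl(\bigoplus_i\Hom(\C^{d_i},\C^{d_{i+1}})\bigr)/\!\!/\prod_i SL_{d_i}$ and $SL_n/[P,P]\simeq\bigl(\bigoplus_i\Hom_{\mathrm{inj}}(\C^{d_i},\C^{d_{i+1}})\bigr)/\!\!/\prod_iSL_{d_i}$ from \cite{GrantcharovSlipper2026} and \cite{DancerKirawanSwann2013}. It then identifies $A_{\varpi_{d_i}}$ with the $d_i\times d_i$ minors of the composite $C_i=\alpha_{k-1}\cdots\alpha_i$. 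The claim reduces to the linear-algebra fact that all $C_i$ are injective if and only if every $\alpha_i$ is.

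\textbf{Your route.} You build the model directly. Surjectivity of $\mathrm{Sym}\to A$ realizes $\Spec A$ as the closure of the orbit of $v_0$ in $\prod_i\wedge^{d_i}\C^n$. You then show that the locus where every component is nonzero is a single orbit. This uses three inputs:
\begin{itemize}
\item the Plücker relations;
\item the incidence relations $(\iota_\xi x_i)\wedge x_j=0$, which are bilinear and closed, and so automatically vanish when a component is zero;
\item rescaling by the center of $L$, namely $t_i^{s_i}=c_i/c_{i-1}$ for $i<k$ (with $c_0:=1$) and $t_k^{s_k}=c_{k-1}^{-1}$.
\end{itemize}

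\textbf{Comparison.} Your argument is self-contained, needing only standard Plücker-coordinate facts. It also gives an explicit description of the affinization as a multi-cone over the partial flag variety. The paper's argument is shorter once the cited presentations are granted, and it stays in the quiver language used elsewhere in \cite{GrantcharovSlipper2026}. The two pictures are linked by $(\alpha_i)_i\mapsto(C_ie_1\wedge\cdots\wedge C_ie_{d_i})_i$, so the paper's minors are precisely your Plücker coordinates.
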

\begin{proof}
By \cite[Corollary 2.4]{GrantcharovSlipper2026}, respectively \cite[Lemma 4.10]{DancerKirawanSwann2013}, we have that
\begin{align*}
    \overline{SL_n/[P,P]}^{\text{aff}}&\simeq \big(\bigoplus_{i=1}^{k-1}\Hom(\C^{d_i},\C^{d_{i+1}})\big)//\prod_{i=1}^{k-1}\mathrm{SL}_{d_i}\text{, respectively}\\
\mathrm{SL}_n/[P,P]&\simeq\big(\bigoplus_{i=1}^{k-1}\Hom_{\text{inj}}(\C^{d_i},\C^{d_{i+1}})\big)//\prod_{i=1}^{k-1}\mathrm{SL}_{d_i}
\end{align*}
where $\Hom_{\text{inj}}(V,W)$ is the open subset of injective maps. Let $$q:\bigoplus_{i=1}^{k-1}\Hom(\C^{d_i},\C^{d_{i+1}})\rightarrow\big(\bigoplus_{i=1}^{k-1}\Hom(\C^{d_i},\C^{d_{i+1}})\big)//\prod_{i=1}^{k-1}\mathrm{SL}_{d_i}$$ be the projection map. Since $A_{\varpi_{d_i}}\simeq(\wedge^{d_i}\C^n)^*$, we may identify $A_{\varpi_{d_i}}$ with the span of $d_i\times d_i$ minors of the composite $C_i:=\alpha_{k-1}\dots\alpha_{i}:\C^{d_i}\rightarrow\C^n$ where $\alpha_j\in\Hom(\C^{d_j},\C^{d_{j+1}})$. Thus,
$$q^{-1}(D(A_{\varpi_{d_i}}))=\{(\alpha_i)_{i=1}^{k-1}\in \bigoplus_{i=1}^{k-1}\Hom(\C^{d_i},\C^{d_{i+1}}):\alpha_{k-1}\circ\dots\circ\alpha_{i}:\C^{d_i}\rightarrow\C^n\text{ is injective}\}$$
$$q^{-1}(\cap_{i=1}^{k-1}D(A_{\varpi_{d_i}}))=\{(\alpha_i)_{i=1}^{k-1}\in \bigoplus_{i=1}^{k-1}\Hom(\C^{d_i},\C^{d_{i+1}}):\text{ each }\alpha_i\text{ is injective}\}$$
This implies the first claim. To prove the claim about the complement, observe that Cartan multiplication gives a surjection 
$$A_{\varpi_{d_1}}\otimes\dots \otimes A_{\varpi_{d_{k-1}}}\twoheadrightarrow A_{\delta_P}$$
Thus,
\begin{align*}
    x\notin V(A_{\delta_P})&\Leftrightarrow \text{Some $f\in A_{\delta_P}$ satisfies $f(x)\neq 0$}\\
    &\Leftrightarrow\text{ There exists $e_i\in A_{\varpi_{d_i}}$ such that $e_1(x)\dots e_{k-1}(x)\neq0$}\\
    &\Leftrightarrow x\in  D(A_{\varpi_{d_i}})\text{ for all $1\leq i\leq k-1$}.\qedhere
\end{align*}
\end{proof}

\begin{lem}\label{finite stabilizer projection}
    If a point of $\overline{T^*(SL_n/[P,P])}^{\text{aff}}$ has a finite $L^{\text{ab}}$-stabilizer, then there is some $w\in S_k$ for which it projects under $\overline{\pi}_w\circ\Phi_w$ to the open orbit $G/[P^w,P^w].$
\end{lem}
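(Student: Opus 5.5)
We argue by contraposition: we show that if $x$ projects into the boundary $V(A^w_{\delta_{P^w}})$ for every $w\in S_k$, then the $L^{\mathrm{ab}}$-stabilizer of $x$ contains a one-parameter subgroup.

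\textbf{Step 1: weights of functions not vanishing at $x$.} Let $\Sigma_x\subset\mathbf X_{G,P}$ be the set of weights $\eta$ for which some $f\in R_\eta$ satisfies $f(x)\neq0$. Since $\mathbf C[\overline{T^*(G/[P,P])}^{\mathrm{aff}}]=R$ is $\mathbf X_{G,P}$-graded by the $L^{\mathrm{ab}}$-action, the stabilizer of $x$ is the diagonalizable subgroup of $L^{\mathrm{ab}}$ cut out by the characters in the subgroup generated by $\Sigma_x$. Hence the stabilizer is finite if and only if $\Sigma_x$ spans a finite-index sublattice, equivalently spans $\mathbf X_{G,P}\otimes\mathbf Q$.

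\textbf{Step 2: which weight spaces are generated by what.} By Theorem \ref{generators of R}, $R$ is generated by $\operatorname{im}\mu_P^*$ (weight $0$) together with the spaces $\Phi_w^*(A^w_{\nu})$ for $\nu\in\mathbf X^+_{G,P^w}$. The latter lie in $R_{w^{-1}\nu}$, by the twisted equivariance \eqref{eq:Phi-w-weight}. Thus every $f\in R_\eta$ is a sum of products of comoment functions and elements of various $\Phi_w^*(A^w_{\nu})$. If $f(x)\neq0$, then some monomial is nonzero at $x$, so $\eta=\sum_j w_j^{-1}\nu_j$ with each factor nonvanishing at $x$. Consequently $\Sigma_x$ lies in the monoid generated by the sets
\[
S_w:=\{w^{-1}\nu:\ \nu\in\mathbf X^+_{G,P^w},\ \Phi_w^*(A^w_\nu)\not\subset\mathfrak m_x\}.
\]

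\textbf{Step 3: the key claim and the contradiction.} Suppose that for every $w$, the image $\overline{\pi}_w\Phi_w(x)$ lies outside $G/[P^w,P^w]$. By Lemma \ref{complement ideal} applied to $P^w$, for each $w$ there is some index $i_w$ such that $A^w_{\varpi_{d^w_{i_w}}}$ vanishes at $\overline\pi_w\Phi_w(x)$. Since $A^w$ is generated in the fundamental weights $\varpi_{d_i^w}$ via Cartan multiplication, every $\nu$ occurring in $S_w$ has zero coefficient on $\varpi_{d^w_{i_w}}$. So each $S_w$ lies in a proper face of the rotated dominant cone $w^{-1}\mathbf X^+_{G,P^w}$. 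The cones $w^{-1}\mathbf X^+_{G,P^w}$ are the Weyl chambers of the $S_k$-action on $\mathbf X_{G,P}\otimes\mathbf R$, which is the type $A_{k-1}$ reflection representation on the space of block-constant weights. We therefore need a convex-geometric statement: if for every chamber $C_w$ the set $S_w$ lies in a facet of $C_w$, then $\bigcup_w S_w$ does not span the whole space.

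This is the step I expect to be the main obstacle. The naive statement is false for arbitrary choices of facets, so we must use compatibility between the sets $S_w$ coming from the single point $x$. The tool is the compatibility \eqref{Phi_w_moment} together with the Coxeter relations of the intertwiners: $\Phi_{s}^*$ for a simple reflection $s$ identifies the weight spaces on the common wall of adjacent chambers. So $S_w$ and $S_{ws}$ agree on their shared facet, and $\Sigma_x\cap C_w$ is determined consistently. Granting this, $\operatorname{cone}(\Sigma_x)$ is a union of faces, one in each chamber, which fit together along walls. I would then show directly that such a union avoids the relative interior of every chamber, so it is contained in the union of the walls. A union of finitely many walls meeting no open chamber cannot contain a spanning convex cone unless the cone is all of the space, and that case is excluded because it would force some chamber interior to be met. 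This contradicts Step 1. If this direct convexity argument fails, the fallback is to choose $\eta$ in the relative interior of $\operatorname{cone}(\Sigma_x)$, take $w$ with $w\eta$ dominant, and argue that the relevant product of functions lands in $\Phi_w^*(A^w_{w\eta})$, where $w\eta$ is strictly dominant, and is nonzero at $x$. That would contradict the vanishing of $A^w_{\delta_{P^w}}$ at $\overline\pi_w\Phi_w(x)$, since $A^w_{w\eta}$ then lies in the ideal generated by $A^w_{\delta_{P^w}}$.
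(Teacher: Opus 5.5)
There is a genuine gap, and it is at the step you flagged. Your main route needs the inclusion $\Sigma_x\cap C_w\subset S_w$ for every chamber $C_w=w^{-1}\mathbf X^+_{G,P^w}$. Your sentence ``$\operatorname{cone}(\Sigma_x)$ is a union of faces, one in each chamber'' presupposes this inclusion, and neither of your tools supplies it.

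Theorem~\ref{generators of R} only places $\Sigma_x$ inside the monoid generated by $\bigcup_w S_w$. Products of generators coming from different chambers can land in the interior of a third chamber. The wall-compatibility you propose ($S_w$ and $S_{ws}$ agreeing on a shared wall) is not among the recorded properties of the $\Phi_w$. Even if you grant it, the convex statement does not follow. For $k=3$, label the six rays of the $A_2$ arrangement cyclically $r_1,\dots,r_6$, and let each $S_w$ consist of nonzero vectors on whichever of its two bounding rays lies in $\{r_1,r_3,r_5\}$. Then every $S_w$ lies in a single facet of its chamber, and the sets agree along every shared wall. Yet $r_1,r_3,r_5$ positively span the plane, and the monoid they generate meets chamber interiors. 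What excludes such configurations is not compatibility but the fact that a weight of $\Sigma_x$ in $C_w$ already lies in $S_w$.

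The missing ingredient is the surjectivity of $M_{w,\lambda}$ in (\ref{M_w}) for $\lambda\in\mathbf X^+_{G,P^w}$, that is, $(R^w)_\lambda=\operatorname{im}(\mu_{P^w}^*)\cdot(A^w)_\lambda$. It is established inside the proof of Theorem~\ref{generators of R}, from Theorem~\ref{generation of zeroeth cohomology}, Lemma~\ref{Specialization} and graded Nakayama. It gives $\Sigma_x\cap C_w=S_w$ at once, and the paper's proof is then direct:
\begin{enumerate}
\item Take a basis $\lambda_1,\dots,\lambda_{k-1}\in\Sigma_x$ with $f_j\in R_{\lambda_j}$ and $f_j(x)\neq0$.
\item Choose $c_j\in\mathbf Z_{>0}$ so that $\mu=\sum_j c_j\lambda_j$ avoids every wall $p_a^\mu=p_b^\mu$. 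This uses a rational point of the open cone off finitely many hyperplanes, with denominators cleared.
\item Take $w$ with $w\mu$ strictly dominant. The element $g\in(R^w)_{w\mu}$ with $\Phi_w^*(g)=\prod_j f_j^{c_j}$ lies in $\operatorname{im}(\mu_{P^w}^*)\cdot(A^w)_{w\mu}\subset(A^w)_{\varpi_{d_i^w}}R^w$ for every $i$.
\item Since $g$ does not vanish at $\Phi_w(x)$, Lemma~\ref{complement ideal} finishes the proof.
\end{enumerate}

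Your fallback is close to this, but its key step is misstated. The product lies in $R_\eta=\Phi_w^*((R^w)_{w\eta})$, not in $\Phi_w^*((A^w)_{w\eta})$, and you need the surjectivity of $M_{w,w\eta}$ to pass from the former to the latter. You must also take $\eta\in\Sigma_x$ off all walls, not merely in the relative interior of the cone, to get strict dominance. With this, your Step~2 becomes unnecessary.
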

\noindent In the case $P=B$, Lemma \ref{finite stabilizer projection} follows from \cite[Cor. 5.11]{Gannon2024}. We now prove the general case.
\begin{proof}
    Suppose $p\in \overline{T^*(SL_n/[P,P])}^{\text{aff}}$ has a finite $L^{\text{ab}}$-stabilizer $T_p:=\text{Stab}_{L^{\text{ab}}}(p)$. Consider the set of weights
    $$\Sigma(p):=\{\lambda\in\mathbf{X}_{G,P}:\text{ there exists } f\in R_\lambda \text{ such that }f(p)\neq0\}.$$
    If $t\in L^{\text{ab}}$ fixes $p$ and $f(p)\neq0$, then $\lambda(t)=1$. Thus we find $T_p\subset\bigcap_{\lambda\in\Sigma(p)}\ker(\lambda)$. Conversely, if $t\in\bigcap_{\lambda\in\Sigma(p)}\ker(\lambda)$, then
$f(t\cdot p)=f(p)$ for every homogeneous $f\in R$: this is immediate
if $f(p)=0$, and if $f(p) \neq 0$ and $f$ is in homogeneous degree $\lambda$, then $\lambda \in \Sigma(p)$ and so $f(t\cdot p) = \lambda(t)f(p) = f(p)$ since $t \in \mathrm{ker}(\lambda)$. Hence $t.p=p$ and so $t \in T_p$. This argument shows
\[
T_p=\bigcap_{\lambda\in\Sigma(p)}\ker(\lambda).
\]
In particular, $T_p$ is finite if and only if $\Sigma(p)$ spans the character lattice $\mathbf{X}_{G,P}\otimes_{\mathbf{Z}}\mathbf{Q}$. Pick a basis $\lambda_1,\dots,\lambda_{k-1}\in\Sigma(p)$ of $\mathbf{X}_{G,P}\otimes_{\mathbf{Z}}\mathbf{Q}$ and choose corresponding functions $f_j\in R_{\lambda_j}$ with $f_j(p)\neq 0$. Then the cone
    \begin{equation*}\label{The cone}C := \{\sum_{j=1}^{k-1}c_j\lambda_j:c_j>0\}\end{equation*}
is open and of dimension $k-1$.

After lifting $\mu\in\mathbf{X}_{G,P}$ to a character $\widetilde{\mu}$ of the diagonal torus
of $GL_n$, write
\[
\widetilde{\mu}
=
(\underbrace{p_1^\mu,\ldots,p_1^\mu}_{s_1},
 \ldots,
 \underbrace{p_k^\mu,\ldots,p_k^\mu}_{s_k}).
\]
The numbers $p_i^\mu$ are well-defined up to adding a common integer, so
their pairwise differences are independent of the choice of lift.
The complement in $C$ of the finitely many hyperplanes $p_a^\mu=p_b^\mu$ contains a rational point. Clearing denominators, choose $c_1,\ldots,c_{k-1}\in\mathbf{Z}_{>0}$ such that $\mu=\sum_{j=1}^{k-1}c_j\lambda_j$ and the numbers $p_1^\mu,\ldots,p_k^\mu$ are pairwise distinct; this is independent of the choice of lift of $\mu$. Therefore there exists some $w \in S_k$ such that \[p_1^{w(\mu)}  > p_2^{w(\mu)}  > \dots > p_k^{w(\mu)} \] so that we may write $w(\mu) = \sum_{j = 1}^{k - 1}m_j\varpi_{d_j^w}$ for some $m_j \in \mathbf{Z}_{> 0}$. 
We deduce that the function $f:=\prod_{j=1}^{k-1}f_j^{c_j}$ is in $R_\mu$ and has the property that $f(p)=\prod f_j(p)^{c_j}\neq0$. 
    
Let $g:=\Phi_w(f)\in(R^w)_{w\mu}$ and $p^w=\Phi_w(p)$. Then $g(p^w)=f(p)\neq 0$. Since $m_i>0$ and the map in \ref{M_w} is surjective, we have 
$$g\in (R^w)_{w\mu}\subset(A^w)_{w\mu}R^w\subset(A^w)_{\varpi_{d_i^w}}R^w\text{ for all $i$. }$$ Also, since $g(p^w)\neq 0$, for every $i$ there is at least one function in $(A^w)_{\varpi_{d_i^w}}$ which is nonzero at $p^w$. Thus, by Lemma \ref{complement ideal}, $\bar{\pi}_w(p^w)\in\bigcap_{i=1}^{k-1}D((A^w)_{\varpi_{d_i^w}})=SL_n/[P^w,P^w]$ as desired.
\end{proof}

\begin{lem}\label{finite stabilizer implies smooth}
    Any point of $\overline{T^*(SL_n/[P,P])}^{\text{aff}}$ with finite $L^{\text{ab}}$-stabilizer is smooth.
\end{lem}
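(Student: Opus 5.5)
The plan is to reduce, via Lemma \ref{finite stabilizer projection}, to points lying over the open orbit $G/[P^w,P^w]$, and then to show that over that open orbit the affinization map is an isomorphism onto an open subset.

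Let $p$ have finite $L^{\mathrm{ab}}$-stabilizer. By Lemma \ref{finite stabilizer projection} there is $w\in S_k$ such that $\overline{\pi}_w(\Phi_w(p))\in G/[P^w,P^w]$. Since $\Phi_w$ is an isomorphism of varieties, smoothness at $p$ is equivalent to smoothness at $\Phi_w(p)$. After renaming $P^w$ as $P$, it therefore suffices to show that the open subset
\[
U:=\overline{\pi}^{-1}\bigl(G/[P,P]\bigr)\subset \overline{T^*(G/[P,P])}^{\mathrm{aff}}
\]
is smooth.

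The claim is that the natural map $T^*(G/[P,P])\to \overline{T^*(G/[P,P])}^{\mathrm{aff}}$ restricts to an isomorphism onto $U$. We use the cover of $G/[P,P]$ given by Lemma \ref{complement ideal}. It is $\bigcap_i D(A_{\varpi_{d_i}})$, which equals the union of the principal opens $D(f)$ over $f\in A_{\delta_P}$. On the affine side, $U$ is the union of the principal opens $D(\overline{\pi}^*f)$ over $f\in A_{\delta_P}$.

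For each such $f$, the open $D(f)\subset G/[P,P]$ is affine. Indeed, $G/[P,P]$ is quasi-affine with affine closure $\overline{G/[P,P]}^{\mathrm{aff}}$, and $D(f)$ equals the principal open of $f$ in that closure. Its preimage in $T^*(G/[P,P])$ is the restriction of the cotangent bundle to an affine open, hence affine, and it equals $D(\overline{\pi}^*f)$ taken inside $T^*(G/[P,P])$.

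The key point is that
\[
\mathbf{C}\bigl[T^*(G/[P,P])\bigr]_{f}\cong \mathbf{C}\bigl[D(\overline{\pi}^*f)\bigr].
\]
This holds for any function on a quasi-compact separated scheme: the ring of functions on the principal open $X_f$ is $\Gamma(X,\mathcal{O})_f$. Taking $X=T^*(G/[P,P])$, which is quasi-affine, the coordinate ring of $D(\overline{\pi}^*f)$ inside the affinization is $R_f$. This is exactly the ring of functions on the affine variety $X_f$. So the affinization map is an isomorphism $X_f\cong D(\overline{\pi}^*f)$. Gluing over $f\in A_{\delta_P}$ identifies $U$ with the preimage of $G/[P,P]$ in $T^*(G/[P,P])$, which is smooth.

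The main obstacle is verifying that $U$ is really covered by the sets $D(\overline{\pi}^*f)$ with $f\in A_{\delta_P}$. Concretely, a point of the affinization whose image under $\overline{\pi}$ lies in $G/[P,P]$ must be detected by some $f\in A_{\delta_P}$, so that $f$ is nonzero there. This follows from Lemma \ref{complement ideal} applied on $\overline{G/[P,P]}^{\mathrm{aff}}$, together with the compatibility $\overline{\pi}^*(A)\subset R$. One should also check that $\Phi_w$ intertwines the $L^{\mathrm{ab}}$-actions up to the $w$-twist, so that finiteness of the stabilizer is preserved; this is not needed for the smoothness transfer itself.
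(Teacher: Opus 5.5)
Your proposal is correct and follows essentially the same route as the paper: use Lemma \ref{finite stabilizer projection} and the isomorphism $\Phi_w$ to move to a point over the open orbit. Then identify each principal open $D(\overline{\pi}^*f)$ of the affinization with the affine open $T^*(D(f))\subset T^*(G/[P,P])$ via $\Gamma(X_f,\mathcal{O})=R_f$; you take $f\in A_{\delta_P}$, while the paper takes the products $f=e_1\cdots e_{k-1}$. The step you flag as the main obstacle is immediate from Lemma \ref{complement ideal}, since $G/[P,P]=\bigcup_{f\in A_{\delta_P}}D(f)$ and $\overline{\pi}^{-1}(D(f))=D(\overline{\pi}^*f)$.
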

\begin{proof}
Suppose $p\in\overline{T^*(SL_n/[P,P])}^{\mathrm{aff}}$. By Lemma \ref{finite stabilizer projection}, there is $w\in S_k$ such that $p^w:=\Phi_w(p)$ projects via $\overline{\pi}_w$ to the smooth open subset $U:=SL_n/[P^w,P^w]\subset\overline{SL_n/[P^w,P^w]}^{\mathrm{aff}}$. We claim $\overline{\pi}_w^{-1}(U)\simeq T^*U$. Indeed, this follows from the proof of \cite[Prop 4.3]{Gannon2024}, but we can also check it directly locally. Using Lemma \ref{complement ideal}, we may find $e_i\in (A^w)_{\varpi_{d_i^w}}$ such that $e_i(p^w)\neq 0$. Let $f=e_1\dots e_{k-1}$ and consider $V_f:=D(f)\subset \overline{SL_n/[P^w,P^w]}^{\mathrm{aff}}$. Let $X:=T^*(SL_n/[P^w,P^w])$ and $X_f:=T^*(V_f)$, which is an open affine subset of $X$. Then since $X_f$ is affine, we compute
\[
X_f
\simeq \operatorname{Spec}\Gamma(X_f,\mathcal{O}_{X_f})
\simeq \operatorname{Spec}\Gamma(X,\mathcal{O}_X)_f
\simeq \operatorname{Spec}R_f^w
=\overline{\pi}_w^{-1}(V_f).
\]
Since the $V_f$ cover $SL_n/[P^w,P^w]$, we deduce $\overline{\pi}_w^{-1}(U)\simeq T^*U$, as desired.

Finally, $p^w\in\overline{\pi}_w^{-1}(U)\simeq T^*U$ and $T^*U$ is smooth implies $p^w$ is smooth. We deduce $p=\Phi_w^{-1}(p^w)$ is also smooth since $\Phi_w$ is an isomorphism.
\end{proof}

Now define $$A_{w,i}:=\Phi_w^{-1}((A^w)_{\varpi_{d_i^w}}),\;\;\; \chi_{w,i}:=w^{-1}(\varpi_{d_i^w})\in\mathbf{X}_{G,P}.$$ 
By the twisted $L^{\mathrm{ab}}$-equivariance of $\Phi_w$, every element
of $A_{w,i}$ has $L^{\mathrm{ab}}$-weight $\chi_{w,i}$.
Let us now introduce a stratification on $\overline{T^*(SL_n/[P,P])}^{\text{aff}}$. For a subset $J\subset S_k\times \{1,\dots,k-1\}$, consider the corresponding locally closed subset
$$\mathcal{S}_J:=\bigcap_{(w,i)\in J}D(A_{w,i})\cap\bigcap_{(w,i)\notin J}V(A_{w,i})\subset \overline{T^*(SL_n/[P,P])}^{\text{aff}}$$
These strata are $L^{\text{ab}}$-equivariant. In fact, we have the following:

\begin{lem}\label{constant stabilizer on strata}
   The $L^{\text{ab}}$-stabilizer is constant along strata. Namely, for any $p\in\mathcal{S}_J$, the stabilizer is 
    $$\text{Stab}_{L^{\text{ab}}}(p)=\bigcap_{(w,i)\in J}\Ker(\chi_{w,i}).$$
\end{lem}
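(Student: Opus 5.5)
We reuse the argument from the proof of Lemma \ref{finite stabilizer projection}. There, for any point $p$, we showed that
\[
\mathrm{Stab}_{L^{\mathrm{ab}}}(p)=\bigcap_{\lambda\in\Sigma(p)}\Ker(\lambda),
\]
where $\Sigma(p)$ is the set of $L^{\mathrm{ab}}$-weights $\lambda$ for which some $f\in R_\lambda$ satisfies $f(p)\neq0$. The goal is therefore to show that for $p\in\mathcal S_J$ the subgroup $\bigcap_{\lambda\in\Sigma(p)}\Ker(\lambda)$ coincides with $\bigcap_{(w,i)\in J}\Ker(\chi_{w,i})$. Since both sides are intersections of kernels of characters, it suffices to show two things. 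First, $\chi_{w,i}\in\Sigma(p)$ for each $(w,i)\in J$. Second, every $\lambda\in\Sigma(p)$ lies in the subgroup of $\mathbf X_{G,P}$ generated by $\{\chi_{w,i}:(w,i)\in J\}$. The first inclusion is immediate. If $(w,i)\in J$, then $p\in D(A_{w,i})$, so some element of $A_{w,i}$ is nonzero at $p$, and by the twisted equivariance of $\Phi_w$ this element has weight $\chi_{w,i}$.

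For the reverse inclusion, take $\lambda\in\Sigma(p)$ and $f\in R_\lambda$ with $f(p)\neq 0$. Choose $w\in S_k$ with $w\lambda\in\mathbf X^+_{G,P^w}$; this is possible since permuting the block values into nonincreasing order produces a dominant weight. Write $w\lambda=\sum_j m_j\varpi_{d_j^w}$ with $m_j\geq 0$. The surjectivity of $M_{w,w\lambda}$, proved in Theorem \ref{generators of R}, gives
\[
(R^w)_{w\lambda}=\mu_{P^w}^*\bigl(\mathbf C[\mathfrak g^*\times(\mathfrak l^{\mathrm{ab}})^*]\bigr)\cdot(A^w)_{w\lambda}.
\]
By Peter--Weyl, $(A^w)_{w\lambda}$ is spanned by products $\prod_j a_j$ where each $a_j$ is a product of $m_j$ elements of $(A^w)_{\varpi_{d_j^w}}$. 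The comoment functions have weight $0$, because the moment map is $L^{\mathrm{ab}}$-invariant. Apply $\Phi_w^{-1}$ and use $\Phi_w^*(\operatorname{im}\mu^*_{P^w})=\operatorname{im}\mu_P^*$. This writes $f$ as a sum of terms of the form $h\cdot\prod_j\prod_{l=1}^{m_j} e_{j,l}$, with $h$ in the image of $\mu_P^*$ and $e_{j,l}\in A_{w,j}$. Since $f(p)\neq0$, at least one such term is nonzero at $p$. Hence for every $j$ with $m_j>0$ some element of $A_{w,j}$ is nonzero at $p$, that is, $(w,j)\in J$. Therefore
\[
\lambda=w^{-1}(w\lambda)=\sum_j m_j\chi_{w,j}
\]
lies in the span of $\{\chi_{w,i}:(w,i)\in J\}$, and $\Ker(\lambda)$ contains $\bigcap_{(w,i)\in J}\Ker\chi_{w,i}$.

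The main point requiring care is the bookkeeping of weights under $\Phi_w$. One must check that $w^{-1}$ applied to $w\lambda$ returns $\lambda$ and that the identities $\chi_{w,i}=w^{-1}\varpi_{d_i^w}$ are compatible with this convention. One must also check that weight-$0$ comoment factors do not alter weights. The identification of $\Phi_w^{-1}$ of a product with the product of $\Phi_w^{-1}$'s uses only that $\Phi_w^*$ is an algebra isomorphism. Note that the resulting stabilizer depends only on $J$, which gives constancy along $\mathcal S_J$.
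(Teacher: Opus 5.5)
Your proof is correct, but it reaches the reverse inclusion by a different route from the paper's.

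\textbf{The paper's route.} Given $t\in\bigcap_{(w,i)\in J}\Ker(\chi_{w,i})$, the paper checks $f(t\cdot p)=f(p)$ only on the algebra generators supplied by Theorem~\ref{generators of R}:
\begin{itemize}
\item elements of $A_{w,i}$ with $(w,i)\in J$, which are fixed because $\chi_{w,i}(t)=1$;
\item elements of $A_{w,i}$ with $(w,i)\notin J$, where both sides vanish;
\item comoment functions, which are $L^{\mathrm{ab}}$-invariant.
\end{itemize}
Since the functions satisfying $f(t\cdot p)=f(p)$ form a subalgebra, this forces $t\cdot p=p$.

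\textbf{Your route.} You reuse the identity $\mathrm{Stab}_{L^{\mathrm{ab}}}(p)=\bigcap_{\lambda\in\Sigma(p)}\Ker(\lambda)$ from the proof of Lemma~\ref{finite stabilizer projection} and then control the weights in $\Sigma(p)$. This requires the internal step of the proof of Theorem~\ref{generators of R} rather than only its statement. Specifically, you use the surjectivity of $M_{w,w\lambda}$ for the sorting permutation $w$, together with the weight isomorphism $\Phi_w^*:(R^w)_{w\lambda}\to R_\lambda$. Your weight bookkeeping is right: $\lambda=w^{-1}(w\lambda)=\sum_j m_j\chi_{w,j}$ by linearity, and $(w,j)\in J$ whenever $m_j>0$.

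\textbf{Comparison.} The paper's argument is shorter and needs only the generation statement. Yours costs some extra bookkeeping but gives a stronger conclusion. Combine your inclusion with two easy facts: $\chi_{w,i}\in\Sigma(p)$ for $(w,i)\in J$, and $\Sigma(p)$ is a monoid (it contains $0$ via the constant function $1$ and is closed under products). Together these show that $\Sigma(p)$ equals the monoid generated by $\{\chi_{w,i}:(w,i)\in J\}$. So the entire set of weights of functions nonvanishing at $p$, not just the stabilizer, is constant along $\mathcal S_J$.
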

\begin{proof}
    Suppose $p\in\mathcal{S}_J$ and $t\in L^{\text{ab}}$ fixes $p$. Then for each $(w,i)\in J$, there is some $f\in A_{w,i}$ such that $f(p)\neq 0.$ Then $f(t.p)=\chi_{w,i}(t)f(p)\neq0$. Since $t.p=p$ as well, we must have $t\in\Ker(\chi_{w,i})$, and this shows the set containment $\text{Stab}_{L^{\text{ab}}}(p)\subset\bigcap_{(w,i)\in J}\Ker(\chi_{w,i})$.

    Conversely, suppose $t\in \bigcap_{(w,i)\in J}\Ker(\chi_{w,i})$ and  $p\in\mathcal{S}_J$. We will show $f(tp)=f(p)$ for all $f\in R$ to deduce $tp=p$. By Theorem \ref{generators of R}, it suffices to check for $f\in A_{w,i}$, all $(w,i)\in S_k\times\{1,\dots,k-1\}$ and all $f$ coming from the image of the $\mathfrak{sl}_n\times\mathfrak{l}^{\text{ab}}$-moment map. If $f\in A_{w,i}$ and $(w,i)\in J$, then $f(tp)=\chi_{w,i}(t)f(p)=f(p)$ as desired. If $(w,i)\notin J$, then $p\in V(A_{w,i})$ implies $f(tp)=\chi_{w,i}(t)f(p)=0$, as desired. Finally, the moment map is $L^{\mathrm{ab}}$-equivariant for the trivial $L^{\mathrm{ab}}$-action on $\mathfrak{sl}_n^*\times(\mathfrak l^{\mathrm{ab}})^*$, since the right $L^{\mathrm{ab}}$-action commutes with the left $SL_n$-action and $L^{\mathrm{ab}}$ is abelian. Hence $f(tp)=f(p)$ for every function $f$ coming from the moment map. 
\end{proof}
\begin{lem}\label{codim 3}
    The singular locus of $\overline{T^*(SL_n/[P,P])}^{\text{aff}}$ has codimension at least 3.
\end{lem}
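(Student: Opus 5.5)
The plan is to combine Lemma \ref{finite stabilizer implies smooth} with the stratification by the sets $\mathcal{S}_J$ and Lemma \ref{constant stabilizer on strata}. By Lemma \ref{finite stabilizer implies smooth}, the singular locus is contained in the union of those strata $\mathcal{S}_J$ on which the $L^{\mathrm{ab}}$-stabilizer $\bigcap_{(w,i)\in J}\Ker(\chi_{w,i})$ is infinite. Equivalently, these are the strata for which the characters $\{\chi_{w,i}\}_{(w,i)\in J}$ span a proper sublattice of $\mathbf{X}_{G,P}\otimes\mathbf{Q}$. There are finitely many strata, so it suffices to show that each such "bad" stratum has codimension at least $3$ in $X:=\overline{T^*(SL_n/[P,P])}^{\mathrm{aff}}$. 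Note that $\dim X = 2\dim(G/[P,P])$.

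\textbf{Step 1: bound the dimension of a bad stratum.} Fix a bad stratum $\mathcal{S}_J$. The stabilizer along it is a fixed positive-dimensional subgroup $H:=\bigcap_{(w,i)\in J}\Ker(\chi_{w,i})\subset L^{\mathrm{ab}}$, so $\mathcal{S}_J\subset X^{H}$. I would therefore bound $\dim X^{H'}$ for each one-dimensional subtorus $H'\subset H$.

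\textbf{Step 2: use symplectic geometry.} By \cite{FuLiu2025}, $X$ has symplectic singularities, so its symplectic leaves stratify it. Moreover, the $L^{\mathrm{ab}}$-action is Hamiltonian, with moment map given by the $(\mathfrak{l}^{\mathrm{ab}})^*$-component of $\mu_P$. A fixed locus of a torus acting Hamiltonianly on a symplectic variety meets each leaf in a symplectic subvariety. Combined with the reduction-type dimension count this gives
\[
\dim X^{H'}\cap\mathcal{L}\;\leq\;\dim\mathcal{L}-2
\]
for each leaf $\mathcal{L}$. Even this is not quite enough on its own, because leaves are even-dimensional. So I would argue as follows:
\begin{itemize}[label=$\circ$]
\item Singular points lie in leaves of codimension $\geq 2$.
\item Since leaves are even-dimensional, those leaves have codimension $\geq 2$; to get codimension $\geq 3$ it suffices to exclude codimension-$2$ leaves meeting the singular locus.
\item A codimension-$2$ leaf consists of points fixed by a positive-dimensional subtorus $H'$. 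The transverse slice is then a $2$-dimensional symplectic singularity (a Kleinian surface) carrying a nontrivial $H'$-action with the leaf fixed pointwise.
\end{itemize}
The aim is to rule this last situation out.

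\textbf{Step 3: exclude codimension-$2$ leaves (the main obstacle).} I would try the following argument. Every point with a nontrivial stabilizer lies in $\overline{\pi}_w^{-1}(V(A^w_{\varpi_{d_i^w}}))$ for some $w,i$, transported via $\Phi_w$; this is the content of Lemma \ref{finite stabilizer projection} read contrapositively. It therefore suffices to show that each
\[
\Phi_w^{-1}\overline{\pi}_w^{-1}\bigl(V(A^w_{\varpi_{d_i^w}})\bigr)\cap\mathcal{S}_J
\]
has codimension $\geq 3$ whenever $\mathcal{S}_J$ is bad. I expect to do this by directly estimating fibres. The base $V(A^w_{\varpi_{d_i}})\subset\overline{G/[P^w,P^w]}^{\mathrm{aff}}$ has codimension $\geq 2$ when $s$-blocks allow, via the quiver description of Lemma \ref{complement ideal}. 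The fibres of $\overline{\pi}_w$ over it lose at least one dimension compared to the generic fibre, using the $w$-twisted equivariance together with the requirement that all $A_{w',i'}$ with $(w',i')\notin J$ vanish.

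The genuinely hard part is the case where the base locus has codimension exactly $1$, namely blocks of size $1$ with $d_i=d_{i+1}-1$. There one must use the several different $w$ simultaneously: a point must fail to be generic for every $w$, and I would intersect the conditions coming from $w$ and $ws_i$ to gain the extra codimension. If that fails, the fallback is a reduction to the Levi of rank-one blocks, where the case $P=B$ is \cite{Gannon2024}.
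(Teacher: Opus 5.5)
There is a genuine gap. Step 3, where all of the content lies, is only a plan (``I expect to do this by directly estimating fibres'', plus a fallback), and the extra dimension drop in the fibres is never established.

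Steps 1--2 do not reduce the problem either. The singular locus of a variety with symplectic singularities is a union of even-dimensional leaves. So ``codimension $\geq 3$'' is equivalent to ``no codimension-$2$ leaf lies in the singular locus'', i.e.\ to the terminality this lemma is later used to prove. Nothing local excludes that case: a Kleinian slice $\C^2/\mathbf{Z}_m$ carries the nontrivial Hamiltonian $\C^*$-action $(x,y)\mapsto(tx,t^{-1}y)$. Your inequality $\dim X^{H'}\cap\mathcal{L}\leq\dim\mathcal{L}-2$ holds only when $H'$ acts nontrivially on $\mathcal{L}$. It is contradicted by your own next bullet, where $H'$ fixes a codimension-$2$ leaf pointwise.

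Your diagnosis of the hard case is also off. The locus $V(A_{w,i})\subset X$ never has codimension $1$, whatever the block sizes. Two linearly independent elements of $A_{w,i}$ are non-associate primes of $R$, so $V(A_{w,i})$ already has codimension $\geq 2$. The real problem is finding a \emph{third} independent condition.

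Also, ``a point with nontrivial stabilizer misses every open orbit $G/[P^w,P^w]$'' is the converse of Lemma~\ref{finite stabilizer projection}, not its contrapositive. It holds for a different reason: $L^{\mathrm{ab}}$ acts freely on $T^*(G/[P^w,P^w])$.

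What is missing is the paper's three-cut dimension count on an irreducible component $Z$ of a bad stratum $\mathcal{S}_J$. It needs two inputs your sketch never uses:
\begin{enumerate}
\item Nonzero elements of each $A_{w,i}$ are prime in $R$, and $R^\times=\C^\times$. This comes from the UFD property of $R^w$ \cite{FuLiu2025} together with the fibre and Cartan gradings.
\item Via the intertwiner $\Phi_{w_0}$, there are nonzero $b\in A_{w_0,k-i}$ of weight $-\varpi_{d_i}$, opposite to $A_{\varpi_{d_i}}$.
\end{enumerate}
The count then runs as follows.
\begin{enumerate}
\item Choose a one-parameter subgroup $\gamma$ of the stabilizer $T_J$ and $i$ with $d=\langle\varpi_{d_i},\gamma\rangle\neq 0$. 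This forces $A_{\varpi_{d_i}}$ to vanish on $Z$. Cut by a prime $a_1\in A_{\varpi_{d_i}}$.
\item Pass to the $\gamma$-invariants of $R/(a_1)$. This is a genuine drop, because an independent $\bar a_2$ of $\gamma$-degree $d$ survives. $Z$ still lies in the $\gamma$-fixed locus, which is closed in the spectrum of the invariants.
\item Cut by the $\gamma$-invariant $\bar a_2\bar b$. It is nonzero in the domain $R/(a_1)$, but it vanishes on the fixed locus because $\bar a_2$ has nonzero degree.
\end{enumerate}
The opposite-weight function $b$ is exactly the ``use several $w$ at once'' ingredient you gesture at. Without it and the primality input, nothing in your sketch gets past codimension $2$.
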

\begin{proof}
We first observe that every nonzero element of every $A_{w,i}$ is
prime in $R$, and that $R^\times=\C^\times$. Indeed, after applying $\Phi_w$, it is enough to prove that every nonzero element of $(A^w)_{\varpi_{d_i^w}}$ is irreducible in $R^w$. The cotangent-fiber grading on $R^w$ is nonnegative and has degree-zero part $A^w$, so any factorization of such an element in $R^w$ is already a factorization in $A^w$. The Cartan grading
\[
\deg\bigl((A^w)_{\sum_jc_j\varpi_{d_j^w}}\bigr):=\sum_jc_j
\]
is nonnegative, has degree-zero part $\C$, and places $(A^w)_{\varpi_{d_i^w}}$ in degree one. Hence one of the factors is a scalar. Since $R^w$ is a UFD \cite[Corollary 1.3(2)]{FuLiu2025} (see also \cite[Proposition 3.3]{Gannon2024}) every such element is prime. We may similarly show that $R^\times=\C^\times$.

By Lemma \ref{finite stabilizer implies smooth}, every singular point lies in a stratum with positive-dimensional stabilizer. Thus fix a subset $J\subset S_k\times \{1,\dots,k-1\}$ such that the strata $\mathcal{S}_J$ has an $L^{\text{ab}}$-stabilizer, denoted by $T_J$, with dimension at least 1. By Lemma \ref{constant stabilizer on strata},
\[
T_J=\bigcap_{(w,i)\in J}\ker(\chi_{w,i}).
\]
Let $Z$ be an irreducible component of this $\mathcal{S}_J.$

We will now successively define $\C$-algebras $S^0,S^1,S^2,S^3$ such that for each $0\leq i\leq 2$, 
$$\dim(\Spec(S^{i+1}))\leq \dim(\Spec(S^i))-1$$
and such that $Z$ is locally closed in $S^3.$ 

Let $S^0=R.$ Pick a non-trivial 1-parameter subgroup $\gamma:\C^*\rightarrow T_J.$ This induces a $\gamma$-grading on $S^0$: for $f\in R_\lambda$, declare $\deg_\gamma(f):=\langle\lambda,\gamma\rangle$. Since the characters $\chi_{1,i}=\varpi_{d_i}$ for $1\leq i<k$, form a basis of $\mathbf{X}_{G,P}$, there is some $i$ such that $d:=\langle\chi_{1,i},\gamma\rangle\neq0$. In particular, $(1,i)\notin J$ and consequently $A_{\varpi_{d_i}}$ vanishes on $Z$. Since $A_{\varpi_{d_i}}\simeq(\wedge^{d_i}\C^n)^*$ has dimension at least 2, we may pick linearly independent elements
$$a_1,a_2\in A_{1,i}= A_{\varpi_{d_i}}.$$
Now let $w_0$ denote the longest element of the $S_k$ acting on $\mathbf{X}_{G,P}$. Then $\chi_{w_0,k-i}
=w_0^{-1}\varpi_{d_{k-i}^{w_0}}
=-\varpi_{d_i}.$
Choose
\[
0\neq b\in A_{w_0,k-i}.
\]

Define $S^1:=S^0/(a_1).$ Since $a_1\in A_{\varpi_{d_i}}$, it is prime, $S^1$ is an integral domain, and $\dim(S^1)\leq \dim(S^0)-1$. Since $a_1\in A_{\varpi_{d_i}}$ vanishes on $Z$, we also have $Z$ is a locally closed subvariety of $\Spec(S^1).$ 
The images $\bar a_2,\bar b\in S^1$ are nonzero. Indeed, if either
image were zero, then $a_1$ would divide $a_2$ or $b$. Irreducibility
would make the corresponding elements associates, contradicting,
respectively, the linear independence of $a_1,a_2$ or the opposite
nonzero weights of $a_1,b$, since $R^\times=\C^\times$. Since those elements are also homogeneous with respect to the $\gamma$-grading on $S^0$, there is an induced $\gamma$-grading on $S^1$ so that $\deg_\gamma(\bar{a}_2)=d$ and $\deg_\gamma(\bar{b})=-d.$

Define $S^2:=(S^1)^{\gamma}=(S^1)_{\deg(\gamma)=0}$. This is a finitely-generated integral domain, and by considering $\gamma$-degrees, we find $S^2[t]\rightarrow S^1, t\mapsto\bar{a}_2$, is an embedding. Thus, $\dim(S^2)\leq\dim(S^1)-1$. Next, by Lemma \ref{constant stabilizer on strata}, $\gamma$ fixes any $p\in Z$ since $Z \subset \mathcal{S}_J$. This implies $Z\subset(\Spec S^1)^\gamma$. Finally, let $I_\gamma$ denote the ideal in $S^1$ generated by non-zero $\gamma$-degree elements. Then there is a surjection $S^2=(S^1)_{\deg(\gamma)=0}\twoheadrightarrow S^1/I_\gamma$. Thus, the scheme-theoretic fixed points $\Spec(S^1)^\gamma=\Spec(S^1/I_\gamma)$ is a closed subvariety of $\Spec(S^2)$ (see also \cite[Prop 4.2]{Gannon2024}), and this shows $Z$ is locally closed inside $S^2$.

Finally, define $S^3=S^2/(\bar{a}_2\bar{b})$. Note, we use $\deg_\gamma(\bar{a}_2\bar{b})=d-d=0$, so $\bar{a}_2\bar{b}\in S^2$. Furthermore, since $a_1,a_2,b$ are irreducible in $S^0$, the element $\bar{a}_2\bar{b}$ is nonzero in $S^1$. Thus, $\dim(S^3)\leq\dim(S^2)-1.$ Next, since $\bar{a}_2$ has nonzero $\gamma$-degree, it vanishes on
$(\operatorname{Spec}S^1)^\gamma$. Hence $\bar{a}_2\bar{b}$ vanishes on $Z$, and we have inclusions
$$Z\subset (\Spec(S^1))^\gamma\subset V(\bar{a}_2\bar{b})=\Spec(S^3)$$
Therefore $Z$ is a locally closed subvariety of $\Spec(S^3)$, and this completes the proof:
\begin{equation*}\dim(Z)\leq\dim(S^3)\leq\dim(S^2)-1\leq\dim(S^1)-2\leq\dim(S^0)-3\end{equation*}
Since $Z$ was arbitrary and there are only finitely many strata, the union of all strata with positive-dimensional stabilizer has codimension at least $3$. This union contains the singular locus, so the result follows.
\end{proof}

\begin{proof}[Proof of Theorem \ref{terminal singularities}]
By \cite[Corollary 1.3]{FuLiu2025}, $\overline{T^*(SL_n/[P,P])}^{\text{aff}}$ has symplectic singularities. Therefore by \cite[Corollary 1]{Nam01} to show that $\overline{T^*(SL_n/[P,P])}^{\text{aff}}$ has terminal singularities it suffices to show that the codimension of the singular locus of $\overline{T^*(SL_n/[P,P])}^{\text{aff}}$is at least 4. By Lemma \ref{codim 3}, the codimension of the singular locus is at least 3. However, by the main theorem of \cite{Nam01} (see also \cite{Kaledin2006}) the singular locus of any variety which admits symplectic singularities cannot contain an irreducible component of codimension 3. Therefore the singular locus of $\overline{T^*(SL_n/[P,P])}^{\text{aff}}$ has codimension at least four, and so the claim follows. 
\end{proof}

\bibliography{refs}{}
\bibliographystyle{alpha}

\end{document}